\newtheorem{theorem}{Theorem}[section]
\newtheorem{remark}[theorem]{Remark}
\newtheorem{proposition}[theorem]{Proposition}
\newtheorem{lemma}[theorem]{Lemma}
\def\H#1{{\bf #1}}
\def\ie{\emph{i.e.}, }
\newfont\bbf{msbm10 at 12pt}
\def\eps{\varepsilon}
\def\R{{\mathbb R}}
\def\N{{\mathbb N}}
\def\Z{{\mathcal Z}}
\def\B{{\mathcal B}}
\def\P{{\mathcal P}}
\def\G{{\mathcal G}}
\def\D{{\mathcal D}}
\def\H{{\mathcal H}}
\def\L{{\mathcal L}}
\def\Q{{\mathcal Q}}
\def\cont{{\mathcal C}}
\def\es{{\emptyset}}
\def\sm{\setminus}
\def\dist{\mbox{dist}}
\def\diam{\mbox{\rm diam} }
\def\bd{\partial }
\def\le{\leqslant}
\def\ge{\geqslant}
\newcommand{\Crit}{\mbox{Crit}}
\newcommand{\sCrit}{\mbox{\scriptsize Crit}}
\newcommand{\I}{\mathring{I}}
\newcommand{\f}{\mathring{f}}
\newcommand{\hDelta}{\mathring{\Delta}}
\newcommand{\tH}{\tilde{H}}
\newcommand{\Lp}{\mathcal{L}}
\newcommand{\bm}{\overline{m}}
\newcommand{\bmu}{\overline{\mu}}
\newcommand{\vf}{\varphi}
\newcommand{\ve}{\varepsilon}
\newcommand{\tmu}{\tilde{\mu}}
\newcommand{\tg}{\tilde{g}}
\newcommand{\tnu}{\tilde{\nu}}
\newcommand{\teta}{\tilde{\eta}}
\newcommand{\hLp}{\mathring{\Lp}}
\newcommand{\hF}{\mathring{F}}
\newcommand{\lip}{\mbox{\tiny Lip}}
\newcommand{\dlj}{\Delta_{\ell,j}}
\newcommand{\pa}{\mathcal{P}}
\newcommand{\e}{\mathfrak{e}}
\newcommand{\beq}{\begin{equation}}
\newcommand{\eeq}{\end{equation}}
\def\M{\mathcal{M}}
\def\cyl{{\rm C}}
\def\nolift{\mathcal{NR}}
\begin{document}

\title[Equilibrium states for multimodal maps with holes]{Equilibrium states, pressure and escape for multimodal maps with holes}
\author[M. Demers]{Mark Demers}
\address{Mark Demers\\ Department of Mathematics and Computer Science\\
Fairfield University\\
Fairfield, CT 06824 \\
USA}\email{\href{mailto:mdemers@fairfield.edu}{mdemers@fairfield.edu}}
\urladdr{\url{http://www.faculty.fairfield.edu/mdemers/}}

\author[M. Todd]{Mike Todd}
\address{Mike Todd\\ Mathematical Institute\\
University of St Andrews\\
North Haugh\\
St Andrews\\
KY16 9SS\\
Scotland} \email{\href{mailto:m.todd@st-andrews.ac.uk}{m.todd@st-andrews.ac.uk}}
\urladdr{\url{http://www.mcs.st-and.ac.uk/~miket/}}

\begin{abstract}
For a class of non-uniformly hyperbolic interval maps,
we study rates of escape with respect to conformal measures
associated with a family of geometric potentials.  We establish the existence of physically relevant
conditionally invariant measures and equilibrium states and prove a relation
between the rate of escape and pressure with respect to these potentials.    
As a consequence, we obtain a Bowen formula: we express the Hausdorff dimension 
of the set of points which never exit through the hole in terms of the relevant 
pressure function.  Finally, we obtain an expression for the derivative
of the escape rate in the zero-hole limit.
\end{abstract}

\thanks{MD was partially supported by NSF grant DMS 1101572.  MT was partially supported by NSF grants DMS 0606343 and DMS 0908093.}

\date{\today}
\maketitle

\section{Introduction}
\label{sec:intro}

For a class of dynamical systems with holes, we study the relation between the
conditionally invariant measures, rates of escape and pressures with respect to  
a family of potentials.  Given an interval map $f : I \circlearrowleft$ and a hole $H \subset I$,
we define the exponential rate of escape with respect to a reference measure $m$ to be
\beq
\label{eq:escape}
\e(m,H) = - \lim_{n \to \infty} \frac 1n \log m(\cap_{i=0}^n f^{-i}(I\setminus H))
\eeq
when the limit exists.  
We say the open system satisfies a   {\em Variational Principle}
with respect to a potential $\phi$
if $-\e(m,H) = P_\cont(\phi)$ where $P_\cont(\phi)$ denotes the pressure
of $\phi$ taken over a class of relevant invariant measures $\cont$, 
\[
P_\cont(\phi) = \sup_{\mu \in \cont} \left\{ h(\mu) + \int \phi \, d\mu \right\}.
\]

We will focus on a class of multimodal Collet-Eckmann maps of the interval
satisfying a slow-recurrence condition to the boundary of the hole.  Such maps were
studied in \cite{BDM} using Lebesgue measure as a reference measure and 
$- \log |Df|$ as the relevant potential.

In  this paper, we generalize this study to include the family of potentials
$\{\varphi_t:=-t\log|Df|:t\in \R\}$.  
We will denote by $P_{\M_f}(\varphi_t)$
the pressure with respect to the potential $\varphi_t$ taken over all ergodic $f$-invariant
probability measures, $\M_f$.  
These potentials are often referred to as \emph{geometric potentials} since they capture the geometry and statistical growth properties of the system.   For example, it was shown in \cite{Led81} that a measure $\mu\in \M_f$ with positive entropy is an equilibrium state for 
$\varphi_1$ if and only if $\mu$ is 
absolutely continuous with respect to Lebesgue measure.  Moreover, it was shown in \cite{BrKell} (unimodal Collet-Eckmann case, restricted $t$), \cite{BTeqnat} (multimodal case, restricted $t$) and \cite{ITeq} (multimodal case, general $t$) that there is 
an equilibrium state $\mu_t$ corresponding to $\varphi_t$.   The relation between these measures, the pressure and the Lyapunov spectrum was shown in \cite{ITdim}. 
The classical Bowen formula in the uniformly expanding case, see for example \cite{Rai89, LivMau03}, states that the Hausdorff dimension of the survivor set (the set of points which never escapes through the hole)  is the value $t^*\ge 0$ such that $P_{\M_f}(\varphi_{t^*})=0$.

In this paper we will fix a relevant reference measure and then look at how the mass given by this measure escapes through holes.   
When we consider the potentials $\varphi_t$ for $t\in \R$, our reference measure will be the corresponding $(\varphi_t- P_{\M_f}(\varphi_t))$-conformal measure $m_t$.  These were shown to exist in \cite{ITrev}, and moreover for the equilibrium state $\mu_t$ for $\varphi_t$, we have $\mu_t\ll m_t$. 

For this class of potentials and reference measures, we prove that the escape rate has a natural expression in terms of the pressure; we also prove the existence of further measures, one of which is an equilibrium state on the survivor set
and one of which is the relevant  `geometric conditionally invariant measure' for the system.  
Such conditionally invariant measures, defined precisely in Section~\ref{ssec:transfer}, 
describe the evolution of reasonable 
classes of initial
distributions that have densities with respect to the conformal measures $m_t$.
In addition, we are able to prove a Bowen formula for the Hausdorff dimension of the survivor set.
 Finally, we provide a formula for the derivative of the
escape rate as our hole shrinks to a point (the zero-hole limit).

Similar results regarding the derivative of the escape rate were proved in \cite{KellLiv, FergPoll}
using spectral theory.  By contrast, in our setting no spectral picture is known for the transfer
operators associated with our class of multimodal maps, so we construct Young towers instead.
Unfortunately, a new Young tower must be constructed for each hole since return times can 
suffer unbounded changes due to arbitrarily small perturbations.  Thus a principal aim of the 
present paper is to develop techniques which allow us to retain sufficient control of the
towers we construct along a sequence of holes to prove results such as the Bowen formula
and the derivative of the escape rate.  We note that questions in thermodynamic formalism, such as multifractal spectra have been studied before in the context of multimodal maps, for example in \cite{ChuTak14, ITdim}, this is the first proof of a Bowen formula for such a general class of maps.  Moreover, our development of the theory of Young towers to tackle this problem gives a powerful abstract framework to deal with other non-uniformly hyperbolic dynamical systems. 

The paper is organized as follows.  In Section~\ref{sec:setup}, we precisely define our class of
maps, introduce our conditions on the types of holes we allow and recall definitions of
the objects fundamental to the present work, including pressure, inducing schemes and Young towers.
Section~\ref{sec:results} contains a precise statement of our main results while Section~\ref{sec:tails}
establishes that we have uniform control over our inducing schemes for
a family of potentials.  In Section~\ref{sec:LY} we recall some facts from \cite{BDM} regarding
abstract towers with holes and in Section~\ref{sec:project} we show how to apply those results to our
present setting.  Finally, we prove our Variational Principle in Section~\ref{sec:var proof}
and a Bowen formula for the Hausdorff dimension of the survivor set in Section~\ref{sec:bowen}.
  Section~\ref{zero hole} contains the proof of the derivative of the escape rate
in the zero-hole limit.


\section{Setup}
\label{sec:setup}

\subsection{Multimodal interval maps with some exponential growth} 

Collet-Eckmann maps are interval maps $f: I \circlearrowleft$ with critical points such that
the derivatives $Df^n$ at the critical values increase exponentially.  We will follow the approach of \cite{BDM, DHL} which allows for multimodal
maps with singularities (in this case our singularity set will be the boundary of the hole).

We say a critical point $c$ has {\em critical order} $\ell_c>0$ if there exists a neighborhood
$U_c$ of $c$ and a diffeomorphism $g_c: U_c \to g_c(U_c)$ such that $g_c(c)=0$ and
$f(x) = f(c) \pm |g_c(x)|^{\ell_c}$ for all $x \in U_c$.
A critical point $c$ is \emph{non-flat} if $\ell_c<\infty$. 
 
In this paper, we assume the map $f:I \to I$ is topologically mixing and $\cont^2$ with a critical set
$\Crit_c$ consisting of finitely many critical points $c$ with critical order $2 \le \ell_c < \infty$. Note that in particular, topological mixing means our maps are non-renormalizable: we make this assumption to avoid technicalities regarding uniqueness of equilibrium states.  
Let $B_\delta(\Crit_c) = \cup_{c \in \sCrit_c} B_\delta(c)$ denote the 
$\delta$-neighborhood of $\Crit_c$.  We assume $f$ satisfies the following
conditions for all sufficiently small $\delta > 0$:

\begin{enumerate}
  \item[{\bf(C1)}]  {\em Expansion outside $B_\delta(\Crit_c)$: }  There exist
$\gamma > 0$ and $\kappa > 0$ such that for every $x$ and $n \geq 1$ such that
$x_0 = x, \dots, x_{n-1} = f^{n-1}(x) \notin B_\delta(\Crit_c)$, we have
\[
|Df^n(x)| \geq \kappa \delta^{\ell_{\max}-1} e^{\gamma n},
\]
where $\ell_{\max} = \max\{ \ell_c : c \in \Crit_c \}$.
Moreover, if  $x_0 \in f(B_\delta(\Crit_c))$ or $x_n \in B_\delta(\Crit_c)$,
then we have
\[
|Df^n(x)| \geq \kappa e^{\gamma n}.
\]
\item[{\bf(C2)}]  {\em Slow recurrence and derivative growth along critical orbit: } There exists $\Lambda > 0$ such that for all $c \in \Crit_c$
there is $\vartheta_c \in (0,\Lambda/(5\ell_c))$ such that
\[
|Df^k(f(c))| \geq e^{\Lambda k}
\ \text{ and }
\mbox{\textnormal{dist}}(f^k(c), \Crit_c) > \delta e^{-\vartheta_c k}
\quad \text{ for all } k \geq 1.
\]
\end{enumerate}

A consequence of (C1) and (C2) together is that all periodic orbits must be repelling.  The first half of condition (C2) is the actual Collet-Eckmann condition, and the second half is a slow
recurrence condition.\footnote{\cite{DHL} and \cite{BDM} include a third condition
as part of their formal assumptions: (C3) $\exists \, c^* \in \Crit_c$ whose preimages are dense in
$I$ and no other critical point is among these preimages.  In our setting, it follows from
(C1)-(C2) and our assumption of topological mixing that all $c \in \Crit_c$ satisfy this
condition.}   

We assume without loss of generality that $\vartheta_c$ is small relative to $\gamma$
and $\Lambda$. 

\subsection{Introduction of Holes}
\label{ssec:intro holes}

A hole $H$ in $I$ is a finite union of open intervals $H_j$,
$j = 1, \ldots, L$.
Let $\I = I \backslash H$ and set $\I^n = \bigcap_{i=0}^n f^{-i}\I$, $n \in \N \cup \{ \infty \}$.
We refer to the set $\I^\infty$ as the {\em survivor set} for the open system, \ie $\I^\infty$
represents the set of points that do not escape in forward time.
Define $\f^n = f^n|\I^n$, $n \ge 1$, to be the maps on the noninvariant domains $\I^n$.

Two objects fundamental to the study of open systems are the escape rate $\e(m,H)$
defined by \eqref{eq:escape} and conditionally
invariant measures, whose definition we recall below.
In what follows, in order to simplify notation
when the hole  is clear by context, 
we sometimes suppress that variable and denote the escape rate by $\e(m)$.  

A {\em conditionally invariant measure} for the open system $(I,f,H)$ 
 is a probability measure $\mu$ for which there exists a constant $0 \le \lambda < 1$ such that
$\f_*\mu(A) := \mu(f^{-1}A \cap \I^1) = \lambda \mu(A)$ for any Borel set $A \subset I$.
This relation immediately implies $\lambda = \mu(\I^1)$ and $\f^n_*\mu(A) = \lambda^n \mu(A)$
so that $\e(\mu) = -\log \lambda$ by \eqref{eq:escape}.  

In order to invoke the tower construction of \cite{BDM},
we place several conditions
on the placement of the holes in the interval $I$.

\medskip
\noindent
\parbox{.1 \textwidth}{\bf(H1)}
\parbox[t]{.8 \textwidth}{Let $\vartheta_c, \delta >0$ be as in (C2).
For all $c \in \Crit_c$ and $k \geq 0$,
\[
\mbox{\textnormal{dist}}(f^k(c), \partial H) > \delta e^{-\vartheta_c k}.
\]
}

\medskip
\noindent
(H1) and (C2) imply that we can treat $\partial H$ the same as $\Crit_c$ in terms of the
slow approach of critical orbits.
Our second condition on $H$ is that the positions of its connected
components are generic with respect to
one another.  This condition also doubles as a transitivity condition
on the constructed tower
which ensures our conditionally invariant densities will be bounded away from
zero.  In order to
formulate this condition, we need the following fact about
$\cont^2$ nonflat nonrenormalizable maps satisfying (C1)-(C2)
(see \cite[Sect. 2.4.2]{BDM} or \cite[Lemma 1]{DHL}).
\begin{equation}
\label{eq:no holes mixing}
\begin{array}{l}
\text{There exist $c^*\in \Crit$ and $\delta_* >0$ such that for all $\delta \le \delta_*$, there exists 
$n = n(\delta)$}\\ [0.1 cm]
\text{such that for all
intervals $\omega \subseteq I$ with }
|\omega| \geq \frac{\delta}{3}, \\[0.1cm]
\quad\ (i)\ f^n\omega \supseteq I, \text{ and }  \\[0.1cm]
\quad (ii)\ \text{there is a subinterval $\omega' \subset \omega$ such that
$f^{n'}$ maps $\omega'$ }\\[0.01cm]
\quad \quad\ \text{ diffeomorphically onto }(c^*-3\delta, c^*+3\delta) \text{ for some } 0 < n' \leq n.
\end{array}
\end{equation}
Using this fact, we formulate a condition on the placement of the components
of the hole.  This condition is generic in the sense that it is satisfied by a full-measure set
of parameters governing the placement.
Within each component $H_j$, we place an artificial critical point
$b_j$, so
\[
\Crit_{\mbox{\tiny hole}} := \{ b_1, \dots, b_L \}.
\]
The points $b_j$ are chosen so that  $\Crit_{\mbox{\tiny hole}} \cap \Crit_c = \emptyset$.  
Choose $\delta$ so small that all points in $\Crit_c \cup \Crit_{\mbox{\tiny hole}}$
are at least $\delta$ apart and let $n(\delta)$ be the corresponding integer from
\eqref{eq:no holes mixing}.  We assume the following.

\medskip

\noindent
\parbox{.07 \textwidth}{\bf(H2)}
\parbox[t]{.9 \textwidth}{(a) $(\cup_{n\ge 0} f^nb_j) \cap c \in \Crit_c = \emptyset$ for all
$1 \leq j \leq L$. 
\\ [0.1cm]
(b) Let $f^{-1} (f b_j) = \cup_{i=1}^{K_j} g_{j,i}$.
For all $j,k \in \{1, \ldots, L\}$, there exists $i \in \{1, \ldots, K_j \}$
such that
$f^\ell b_k \neq g_{j,i}$ for
$1 \leq \ell \leq n(\delta)$. \\ [0.1cm]
(c) For each $j = 1, \ldots, L$, there is $r= r(j)$ such that
for all $x \in B_\delta(b_j)$, 
$f^i(x) \notin B_\delta(\Crit_c \cup \Crit_{\mbox{\tiny hole}})$ for $i = 1, \ldots, r(j)-1$,
and $|Df^r(x)| \geq \max\{ \kappa e^{\gamma r}, 4 \}$.}
\medskip

\noindent
For generically placed holes,
Condition (C1) implies
$|Df^r (x)| \geq \kappa e^{\gamma r}$ whenever
$x \notin B_\delta(\Crit_c)$ and
$f^r(x) \in B_\delta(\Crit_c)$, so by taking $\delta$ small,
and using assumption (H2)(a), we can always satisfy (H2)(c).
The specific form of (H2)(c) is to allow the $b_j$ to be periodic points,  
which is the one point of difference with \cite{BDM} in this condition.


\subsection{Pressure and conformal measures}
\label{ssec:intro press}

Suppose that $f:X\to X$ is a dynamical system on a topological space $X$ and $\phi:X\to [-\infty, \infty]$ is a \emph{potential}, both of these maps preserving the Borel structure.   Then we define the \emph{pressure} of $\phi$ to be 

$$P_{\M_f}(\phi):=\sup_{\mu\in \M_f}\left\{h(\mu)+\int\phi~d\mu:-\int\phi~d\mu<\infty\right\},$$
where
$$\M_f:=\left\{\mu \text{ Borel, ergodic}, \ \mu\circ f^{-1}=\mu, \ \mu(X)=1\right\},$$
and $h(\mu)$ denotes the (metric) entropy of $\mu$.  If a measure $\mu\in \M_f$ satisfies $h(\mu)+\int\phi~d\mu=P_{\M_f}(\phi)$, then we call $\mu$ an \emph{equilibrium state} for $(X,f,\phi)$.  These measures are often associated with another natural type of (possibly non-invariant) measure:  a Borel measure $m$ on $X$ is called \emph{$\phi$-conformal} if the Jacobian of $m$ is $e^\phi$, \ie 
$\frac{dm}{d(m\circ f)}=e^\phi$.

In this paper we will be particularly interested in the set of interval maps defined above and in the potential $\varphi:=-\log|Df|$ as well as the family  
$$\{\varphi_t:=-t\log|Df|:t\in \R\}.$$ 
We will sometimes denote $p_t:=P_{\M_f}(\varphi_t)$
to be the pressure with respect to the potential $\varphi_t$.  
 
For the potential $\varphi = \varphi_1$, the natural reference measure is 
$m=$ Lebesgue, with respect to which the equilibrium state $\mu$ for $(I, f, \varphi)$ is absolutely continuous.  Notice that Lebesgue is $\varphi$-conformal, and indeed since 
$p_1=0$, it is trivially $(\vf-p_1)$-conformal.  This case was studied in \cite{BDM}.  
When we consider the potentials $\varphi_t$ for $t\in \R$, our reference measure will be the corresponding $(\varphi_t- p_t)$-conformal measure $m_t$.  These were shown to exist in \cite{ ITrev}, and moreover the equilibrium state $\mu_t$ for $\varphi_t$
satisfies $\mu_t\ll m_t$. 
For convenience, for $t\in \R$ we denote
$$\phi_t:=\vf_t-p_t.$$

Given a potential $\phi$ on $I$, when we introduce a hole $H$ into the interval, we 
define the corresponding punctured potential by
$\phi^H(x)= \phi(x)$ on $I\sm H$ and $\phi^H(x)=-\infty$ on $H$;
the corresponding set of measures is
$$\M_f^H:=\left\{\mu\in\M_f:\mu(H)=0\right\}.$$
Observe that by invariance, these measures must be supported on $\I^\infty$.

In order for the pressure with respect to our punctured potential to be well-defined, we will
have to restrict our class of invariant measures further.  Define 
\beq
\label{eq:class G}
\G_f^H = \{ \mu \in \M^H_f : \exists C, \beta >0 \mbox{ such that for all } \ve >0, 
\mu(B_\ve(\partial H)) \le C \ve^\beta \} .
\eeq
The corresponding pressure we shall work with is,
\[
P_{\G^H_f}(\phi) = \sup_{\mu \in \G^H_f} \left\{ h(\mu) + \int \phi \, d\mu : 
-\int \phi \, d\mu < \infty \right\}.
\]
The class of measures $\G^H_f$ are those invariant measures which do not concentrate
too much mass on the boundary of the hole.
As we will see in Section~\ref{sec:var proof}, the relevant measures here, for example the equilibrium state for $\phi_t^H$, lie in $\G^H_f$, so focusing on these measures is not a significant restriction.


\subsection{Transfer Operators}
\label{ssec:transfer}

We will study the statistical properties of our open systems via  transfer
operators both for $(I,f,H)$ and for the associated Young tower, defined in the next section.

Given a potential $\phi$ on $I$ and a suitable test function $\psi$, the associated 
transfer operator $\Lp_\phi$ acts on $\psi$ by $\Lp_\phi \psi(x) = \sum_{y \in f^{-1}x} \psi(y) e^{\phi(y)}$.
When we work with the corresponding punctured potential $\phi^H$, we define
the transfer operator in terms of the restricted map $\f$:
\[
\Lp_{\phi^H} \psi(x)= \sum_{y \in \f^{-1}x} \psi(y) e^{\phi^H(y)} = \sum_{y \in f^{-1}x}
\psi(y) e^{\phi(y)} 1_{\I^1}(y),
\]
where $1_A$ denotes the indicator function of the set $A$.

The importance of the transfer operator stems from the fact that if $m$ is $\phi$-conformal
and $g$ is a function such that $\Lp_{\phi^H} g = \lambda g$ for some $\lambda >0$, then
$g m$ defines a conditionally invariant measure for $(I,f,H)$ with eigenvalue $\lambda$:
\[
gm(\f^{-1} A) = \int_{f^{-1} A \cap \I^1} g \, dm = \int_{A \cap \I} \Lp_{\phi^H} g \, dm 
= \lambda \int_{A \cap \I} g \, dm = \lambda \, gm(A),
\]
where in the last step we have used the fact that $g$ is necessarily zero on $H$ due to the
relation $\Lp_{\phi^H} g = \lambda g$.

Since for given $(I,f,H)$,
many conditionally invariant measures  exist for any eigenvalue between 0 and 1
under very mild conditions \cite{DemY}, it is imperative to find a
conditionally invariant measure with physical properties, such as 
that $\mu$ is the limit of $\f_*^nm/|\f_*^nm|$ for $m$ in
a reasonable class of initial distributions.

When our reference measure is the conformal measure $m_t$, we will take as our
class of initial distributions those measures $\eta$ having H\"older continuous densities with
respect to $m_t$.  If a conditionally invariant measure $\mu_t^H$
can be realized as the limit of $\f_*^n \eta/ |\f_*^n \eta|$ for all such measures $\eta$, then
we will call $\mu_t^H$ a {\em geometric conditionally invariant measure}.  
When $t=1$, such measures have been termed `physical conditionally invariant measures'
(see \cite{BDM, DemY});
we prefer the term `geometric' in this context, since although we will prove that
such $\mu_t^H$ have densities with respect to $m_t$, they are singular with respect to
Lebesgue measure when $t \neq 1$.


\subsection{Induced maps and Young towers}
\label{ssec:induced}

Given a set $\Crit_{\mbox{\tiny hole}}$ satisfying assumption (H2), in \cite{BDM}, 
inducing schemes $(X,F, \tau, H)$ are constructed respecting small holes $H$ 
satisfying (H1) and (H2).   
For an interval $X \subset I$, the triple $(X,F,\tau)$ is an \emph{inducing scheme} if there is  
a countable collection of subintervals $\{ X_i\}_i\subset X$ and a function $\tau:\cup_iX_i\to \N$ 
such that for each $i$, $\tau|_{X_i}$ is constant and the map $F=f^\tau|_{X_i}$  is a diffeomorphism
of $X_i$ onto $X$.  We define  $\tau_i:=\tau|_{X_i}$.

By `respecting the hole' $H$, we mean that for
each domain $X_i$ in the inducing scheme, either $f^n(X_i) \subset H$ or
$f^n(X_i) \cap H = \emptyset$ for $0 \le n \le \tau(X_i)$. 
To accomplish this, $\partial H$ is considered
as a discontinuity set for $f$ and cuts are introduced during the construction of the
inducing scheme.  
During the construction, no escape is allowed and the holes are inserted afterwards
into the tower $\Delta$ defined below.  

Given an inducing scheme respecting a hole $H$, $(X, F, \tau, H)$, we define the
corresponding Young tower as follows.  Let 
\[
\Delta = \{ (x, n) \in X \times \mathbb{N} \mid n < \tau(x) \} .
\]
$\Delta$ is viewed schematically as a tower with $\Delta_\ell = \Delta|_{n = \ell}$ as the
$\ell$th level of the tower.  The tower map, $f_\Delta$, is defined by $f_\Delta(x, \ell) = (x, \ell + 1)$
if $\ell + 1 < \tau(x)$ and $f_\Delta(x, \tau(x) -1) = (f^\tau(x), 0) = (F(x), 0)$ at return times.
There is a canonical projection $\pi : \Delta \to I$ satisfying $\pi \circ f_\Delta = f \circ \pi$.
$\Delta_0$ is identified  with $X$ so that $\pi|_{\Delta_0} = Id$.  The partition $\{ X_i \}$
induces a countable Markov partition $\{ \Delta_{\ell,j} \}$ on $\Delta$ via the identification
$\Delta_{\ell,j} = f_{\Delta}^\ell(X_j)$, for $\ell < \tau(X_j)$.  
The towers constructed in \cite{BDM} are mixing, \ie g.c.d.$\{\tau\} =1$,
and the partition $\{ X_i \}$ is generating. 

If $f^n(x) \in H$, then we place a hole $\tH$ in $\Delta_n$ and the elements above $\tH$
in the tower are deleted: \ie the set that
maps into $\tH$ does not return to the base.  The fact that the inducing
scheme respects $H$ implies that $\tH := \pi^{-1} H$ is the union of countably many
partition elements $\Delta_{\ell,j}$.  We set $\hDelta = \Delta \setminus \tH$ and refer to
the corresponding partition elements as $\hDelta_{\ell,j}$.
Similarly, we define $\hDelta^n = \cap_{i=0}^n f_\Delta^{-i} \hDelta$, $n \in \N \cup \{ \infty \}$.

\subsection{Lifting to the Young tower}
\label{ssec:lifting to Delta}

Since our main results are all proved using a Young tower $\Delta$ coming from an inducing scheme $(X, F, \tau, H)$, we will need to ensure that the tower we choose `sees all the relevant statistical properties' of our system.  
In particular, we will show that the Hausdorff dimension of the set of points which do not return to $X$ with $F$:
$$\nolift_\Delta := \left\{x\in X:\tau(x)=\infty\right\}$$
is a set with Hausdorff dimension `sufficiently bounded away from 1'; indeed Theorem~\ref{thm:lift} below bounds this by some $D<1$. 
This means that the Young tower contains all information in $I$ of sufficiently high Hausdorff 
dimension: we call points $x\in \pi(\Delta)$ \emph{liftable} and denote this set by 
$\mathcal{R}_\Delta$.  By topological transitivity there exists $N\in \N$ such that $f^N(X)=I$, 
and since Hausdorff dimension is preserved by bi-Lipschitz mappings, Theorem~\ref{thm:lift} 
says $\dim_H\left(f^N(\nolift_\Delta)\right)\le D$, where $\dim_H(\cdot)$ denotes the Hausdorff dimension of a set.  
Therefore, the set of points in $I$ which are not liftable must have Hausdorff 
dimension $\le D$.  This means that if we were interested in a set  $A\subset I$ 
that has Hausdorff dimension greater than $D$ then it must be `seen by the tower': 
$$\dim_H\left(A\cap \mathcal{R}_\Delta\right)=\dim_H(A).$$

We will use this information on dimension in conjunction with measures.
Setting $\tau(x) = \infty$ for all points for which $\tau(x)$ is not originally
defined, we say that a measure 
$\mu$ on $I$ \emph{lifts} to the inducing scheme 
(correspondingly lifts to the tower $\Delta$) if $\mu(X)>0$ and $\mu(\tau)<\infty$. 
If $\mu$ is an $f$-invariant measure which lifts, then according to
\cite[Theorem 1.1]{zweimuller},
there is an \emph{induced measure} $\nu \ll \mu$ supported on $X$
which is $F$-invariant and such that for any $A\subset I$, 
\begin{equation}
\mu(A)=\frac1{\int_X \tau~d\nu}\sum_i\sum_{k=0}^{\tau_i-1}\nu(f^{-k}(A)\cap X_i)
=\frac1{\int_X \tau~d\nu} \sum_{k\ge 0} \nu(\{ \tau > k \} \cap f^{-k}(A)).
\label{eq:proj meas}
\end{equation}
Conversely, given an $F$-invariant measure $\nu$ such that $\int\tau~d\nu<\infty$, 
there exists an $f$-invariant measure $\mu$ defined by \eqref{eq:proj meas} and we 
say that $\nu$ \emph{projects} to $\mu$.  

We can also consider the intermediate 
measures on the Young tower here: given an $F$-invariant measure $\nu$ with 
$\int\tau~d\nu<\infty$, we can define a measure $\mu_\Delta'$ on $\Delta$ by 
putting $\mu_\Delta'|_{\Delta_0}$ to simply be a copy of $\nu$ and then for any 
$A\subset \Delta_{\ell,i}$ where $0\le \ell\le \tau_i-1$, since $f_\Delta^{-\ell}(A)\subset \Delta_0$, we can set $\mu_\Delta'(A)$ to be $\mu_\Delta'(f_\Delta^{-\ell}(A))$.  Then we obtain $\mu_\Delta:=\frac1{\int\tau~d\nu}\mu_\Delta'$.  Observe that $\mu$ as in \eqref{eq:proj meas} is now the push-forward of $\mu_\Delta$ by $\pi$.

We set the pressure of $\phi$ on $\Delta$ to be
$$
P_\Delta(\phi):= \sup_{\mu \in \M_f} \left\{h(\mu)+\int\phi~d\mu :\mu \text{ lifts to } \Delta \text{ and } \mu(-\phi) < \infty \right\}.
$$
Clearly $P_{\M_f}(\phi)\ge P_\Delta(\phi)$ for any $\phi$.  Letting $\M_F$ be the set of $F$-invariant probably measures, we can define the pressure $P_{\M_F}$ as usual.  To see the relation between these different pressures, for a potential $\phi:I\to [-\infty,\infty]$, define 
$$S_n\phi = \sum_{k=0}^{n-1} \phi \circ f^k.$$
Then we obtain the \emph{induced potential} $\Phi(x)$ as $S_{\tau_i}\phi(x)$ for $x\in X_i$.
Abramov's formula says that if $\mu$ lifts to $\nu$ then
$$h(\mu)=\frac{h(\nu)}{\int\tau~d\nu} \quad \text{ and } \quad \int\phi~d\mu = \frac{\int\Phi~d\nu}{\int\tau~d\nu}.$$

This also holds for measures on $\Delta$, since any invariant probability 
measure $\mu_\Delta$ on $\Delta$ must lift to the corresponding inducing scheme; 
indeed the lifted measure is simply 
$\nu=\frac{\mu_\Delta|_{\Delta_0}}{\mu_\Delta(\Delta_0)}$, 
the conditional measure on $\Delta_0$.
The above relations then hold due to
the extra information provided by Kac's Lemma that 
$\int_{\Delta_0} \tau~d\nu =\frac1{\mu_\Delta(\Delta_0)}$.

We inductively define $\tau^n(x) 
=\tau^{n-1}(x)+\tau(f^{\tau^{n-1}(x)}(x))$ to
be the $n$th return of $x$ to $X$ under the inducing scheme, 
and let $\{X_{k,i}\}_i$ be the set of $k$-cylinders
(cylinders of length $k$) for $(X,F)$: that is, for each $i$, $\tau^k$ is constant on $X_{k, i}$ and $F^k:X_{k,i}\to X$ is a diffeomorphism.
Moreover, for a potential $\Phi:\cup_iX_i\to \R$, define the \emph{$n$-th variation} of $\Phi$ as
$$V_n(\Phi):=\sup\{|\Phi(x)-\Phi(y)|:x, y \text{ are contained in the same } n\text{-cylinder}\}.$$
Then $\Phi$ is said to be \emph{locally H\"older} if there exists $\eta>0$ such that $V_n(\Phi)=O(e^{-\eta n})$ for all $n \in \N$.  This condition on potentials is required to define the following kind of measure.
We say that $\nu$ is a Gibbs measure for the potential $\Phi$ 
if $\nu(-\Phi)< \infty$ and
there exist constants $C>0$, $P \in \mathbb{R}$, such that for each 
$n$-cylinder $X_{n,i}$ with respect to $F$, we have
\[
C^{-1} e^{S_n\Phi(y_i)-nP} \le \nu(X_{n,i}) \le C e^{S_n\Phi(y_i)-nP}
\]
for any $y_i \in X_{n,i}$, where $S_n\Phi = \sum_{k=0}^{n-1} \Phi \circ F^k = 
\sum_{k=0}^{\tau^n-1} \phi \circ f^k$.  
Note that the induced potentials considered in this paper will be locally H\"older and have 
equilibrium states which are Gibbs measures with the constant $P$ above equal to the pressure.

\begin{remark}
By \cite{Prz93}, given $f$ as above, any $\mu\in \M_f$ has $\int\log|Df|~d\mu\in [0, |Df|_\infty]$.  Therefore our potentials $\vf_t$ all have $\int\vf_t~d\mu$ finite. Thus we can drop the condition $-\int\vf_t~d\mu<\infty$ in the definition of pressure $P_{\M_f}(\vf_t)$.  Moreover,  since as described above any $f_\Delta$-invariant measure $\mu_\Delta$ with $\mu_\Delta(\Delta)=1$ projects to a measure $\mu\in \M_f$, the same conclusion can be drawn for measures on $\Delta$
(see, in particular, Lemma~\ref{lem:project jac}).  Notice, however, that  for any of our inducing schemes $(X, F, \tau)$, there are measures $\mu_F\in \M_F$ that have $\int\log|DF|~d\mu_F=\infty$.  While we need to keep these in mind when computing $P_{\M_F}$, these measures will not to be relevant here.
\end{remark}


\section{Main results}
\label{sec:results}

We fix a set $\Crit_{\mbox{\tiny hole}}$ as described in Section~\ref{ssec:intro holes}
and $\delta_0 >0$ sufficiently small
such that (H2) and \eqref{eq:no holes mixing} are satisfied for $\delta = \delta_0$.  This in turn 
fixes all $\delta$ appearing in (C1)-(C2) to have value $\delta_0$.  

For this choice of $\Crit_{\mbox{\tiny hole}}$, for $h>0$, 
define $\H(h)$ to be the family of holes $H \subset I$
such that 
\begin{enumerate}
  \item $b_j \in H_j$ and $m_1(H_j) \le h$ for each $j = 1, \ldots, L$;
  \item $H$ satisfies (H1),
\end{enumerate}
where $m_1$ denotes Lebesgue measure (the $\vf_1$-conformal measure).

\begin{theorem}
Let $f$ satisfy (C1)-(C2) and fix the set $\Crit_{\mbox{\tiny hole}}$ as above.   
There exist
constants $0<t_0 <1<t_1$ and $h >0$ such that if $t  \in [t_0, t_1]$ and 
$H \in \H(h)$, then $f$ admits an inducing scheme $(X, F,\tau, H)$ 
that respects the hole and 
\begin{enumerate}
\item[(a)] there exists $D<1$ such that 
$\dim_H(\nolift_\Delta)\le D$;
\item[(b)] $\dim_H(\hDelta^\infty)=\dim_H(\I^\infty)$;
\item[(c)] $t\in [t_0, t_1]$ implies that $m_t(\tau<\infty)=m_t(X)$;
\item[(d)] if $t\in [t_0,t_1]$ then any measure $\mu$ on $I^\infty$ which doesn't lift to $(\Delta, f_\Delta)$ must have $h(\mu)+\int\varphi_t^H~d\mu<P_\Delta(\varphi_t^H)$.
\end{enumerate}
\label{thm:lift}
\end{theorem}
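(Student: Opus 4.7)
All four parts rest on controlling the return structure of the induced map $(X,F)$ and its tails, first with respect to Lebesgue measure and then with respect to the conformal measures $m_t$. My overall plan is to prove~(a) via a Bowen--type Hausdorff dimension bound for the non--returning set, to deduce~(b) from the piecewise bi--Lipschitz nature of the projection $\pi$, and then to use the uniform tail estimates established in Section~\ref{sec:tails} to obtain both~(c) and the pressure gap~(d).

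For~(a), the construction in~\cite{BDM} yields exponential Lebesgue tails $m_1(\{\tau>n\})\le C\theta^n$ for some $\theta<1$. The set $\{\tau=n\}$ is a disjoint union of intervals on each of which $f^n$ is a diffeomorphism with derivative at least $\kappa\delta_0^{\ell_{\max}-1}e^{\gamma n}$, by~(C1). Interpreting $\nolift_\Delta$ as the limit set of the countable conformal iterated function system formed by the inverse branches of $F$, a standard covering argument (or a Bowen--type formula for countable IFS) gives $\dim_H(\nolift_\Delta)\le s_\star$, where $s_\star$ is the unique root of the associated pressure equation. The exponential tail bound is precisely the statement that this pressure is strictly negative at $s=1$, so $s_\star<1$; we take $D:=s_\star$.

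Part~(b) then follows formally. By the definition of $\pi$ we have $\hDelta^\infty=\pi^{-1}(\I^\infty)\cap\Delta$, so $\pi(\hDelta^\infty)=\mathcal{R}_\Delta\cap\I^\infty$. Each restriction $\pi|_{\Delta_{\ell,j}}$ is a $C^1$ diffeomorphism onto $f^\ell(X_j)$ with derivatives bounded above and below on that fixed level, hence bi--Lipschitz; countable stability of Hausdorff dimension then gives $\dim_H(\hDelta^\infty)=\dim_H(\mathcal{R}_\Delta\cap\I^\infty)$. Conversely, by topological mixing and~\eqref{eq:no holes mixing} there is $N$ with $f^N(X)=I$, so $I\setminus\mathcal{R}_\Delta$ is covered by finitely many bi--Lipschitz images of $\nolift_\Delta$ (plus a countable set of partition boundaries), and by~(a) therefore has Hausdorff dimension at most~$D$. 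Choosing $h$ small enough that $\dim_H(\I^\infty)>D$---which holds once the hole is small, since the survivor set then has dimension close to~$1$---we conclude $\dim_H(\hDelta^\infty)=\dim_H(\I^\infty)$.

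For~(c), I expect Section~\ref{sec:tails} to provide uniform tail estimates $m_t(\{\tau>n\})\le C_t\theta_t^n$ with $\theta_t<1$ for $t\in[t_0,t_1]$, which combined with $\nolift_\Delta=\bigcap_n\{\tau>n\}$ immediately gives $m_t(\nolift_\Delta)=0$, i.e.\ $m_t(\tau<\infty)=m_t(X)$. Part~(d) is the main obstacle. The strategy is a dichotomy for $\mu\in\M_f^H$: either $\mu$ lifts (so Abramov relates its free energy to that of an induced $F$--invariant measure $\nu$ with $\int\tau\,d\nu<\infty$), or it does not. From~(c) and the tail control one constructs a Gibbs equilibrium state $\nu_t$ on $(X,F)$ for the induced potential $\Phi_t^H$ with $\int\tau\,d\nu_t<\infty$; its projection $\mu_t$ to $I$ is a lifting measure achieving $h(\mu_t)+\int\varphi_t^H\,d\mu_t=P_\Delta(\varphi_t^H)$. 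For a non--lifting $\mu$, combining a Przytycki--type lower bound on $\int\log|Df|\,d\mu$ with Abramov's formula applied to approximating lifting measures yields that the free energy of $\mu$ is at most $P_\Delta(\varphi_t^H)$; the strict inequality then follows from uniqueness of the Gibbs equilibrium state $\nu_t$, together with a limit--measure argument ruling out accumulation of non--lifting measures at the supremum.
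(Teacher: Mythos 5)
Your proposal has genuine gaps, the most serious in parts (a) and (d).

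For (a), the exponential bound $\sum_{\omega\in\Q^{(n)}}|\omega|\le C_1e^{-\alpha_1 n}$ on the \emph{total length} of the covers of $\nolift_\Delta$ does not by itself yield $\dim_H(\nolift_\Delta)<1$: to estimate $\sum_\omega|\omega|^s$ for $s<1$ one must also control the \emph{cardinality} of the cover, since by H\"older's inequality $\sum_\omega|\omega|^s\le(\sum_\omega|\omega|)^s(\#\Q^{(n)})^{1-s}$, and an unbounded (super-exponential) number of pieces would ruin the estimate for every $s<1$. This counting is precisely where the paper's proof does its work: it coarsens $\Q^{(n)}$ to a partition $\tilde\Q^{(n)}$ by gluing adjacent bound pieces, and shows $\#\tilde\Q^{(n)}\le CK^n$ using the bound/free structure and a linear-in-$n$ bound on the depth of critical returns that can become free at time $n$; the constant $D=\log K/(\alpha_1+\log K)$ comes exactly from balancing $\alpha_1$ against $\log K$. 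Your appeal to a Bowen formula for the countable IFS of inverse branches of $F$ also misidentifies the set: the limit set of that IFS is the set of points with infinitely many returns, whereas $\nolift_\Delta$ is the set where the \emph{first} return never occurs; and in the countable-branch setting negativity of the pressure at $s=1$ does not give a root $s_\star<1$ without finiteness of the pressure for $s$ slightly below $1$, which is again the missing counting estimate.

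For (d), your mechanism for the strict inequality does not close. Approximating a non-lifting measure by lifting ones and invoking uniqueness of the Gibbs state can at best show its free energy is $\le P_\Delta(\varphi_t^H)$; a ``limit-measure argument ruling out accumulation at the supremum'' is not an argument that any \emph{individual} non-lifting measure satisfies a strict inequality. The paper's route is quantitative and goes through part (a): a non-lifting $\mu$ is carried by the non-liftable set, so $\dim_H(\mu)=h(\mu)/\chi(\mu)\le D$, whence $h(\mu)-t\chi(\mu)\le-(t-D)\chi(\mu)\le-(t-D)\chi_m<0$, while $P_\Delta(\varphi_t^H)$ is close to $0$ for $t$ near $1$ and small holes. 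Two smaller issues: in (b) you assert $\dim_H(\I^\infty)>D$ for small holes without proof (the paper exhibits the surviving equilibrium measure $\nu_1^H$ with dimension $1+\log\lambda_1^H\to1$); and in (c) your deduction from the tail estimate is circular as the $m_t$-tail bound is established cylinder-by-cylinder over the \emph{returning} part only, so it does not control $m_t(\nolift_\Delta)$ --- the paper instead uses $\dim_H(m_t)=t+p_t/\chi(\mu_t)$ close to $1$ together with part (a).
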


We remark that (a), which is the solution of a problem of the liftability of measures, was used in \cite[Theorem 7.6]{PesSen08} applying results of \cite{Sen03}.
As can be seen from the proof, the constant $D$ there is of the form 
$\log K/(\alpha_1 + \log K)$ where $\alpha_1$ is the exponential return rate in the 
inducing scheme and $K$ depends on the complexity of the map, including 
the placement of the hole.

\begin{proposition}
\label{prop:uniform tails}
Let $f$, $t_0<t_1$, $H$ and $\Delta$ be as in Theorem~\ref{thm:lift}.  Then there exist constants $C_0, \alpha>0$, depending
only on $t_0, t_1$ and $h$ such that for each $t\in [t_0, t_1]$ we have $m_t(\tau >n) \le C_0e^{-\alpha n}$, for all $n\ge 0$.
\end{proposition}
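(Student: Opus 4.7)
The plan is to reduce the $m_t$-tail estimate to the exponential tail for Lebesgue measure $m_1$, which is built into the inducing scheme of \cite{BDM}, by exploiting the conformality of $m_t$ and the bounded distortion of the induced map $F$ on each cylinder. Fix $H \in \H(h)$ and the scheme $(X,F,\tau,H)$ from Theorem~\ref{thm:lift}; by the construction that is recalled in Section~\ref{sec:tails}, each branch $F|_{X_i} = f^{\tau_i}|_{X_i}$ has distortion bounded by a constant $K$, uniform in $i$ and in $H$. Conformality of $m_t$ with respect to $\varphi_t - p_t$ applied to the diffeomorphism $F|_{X_i}$ gives
\[
m_t(X) = \int_{X_i} |DF|^t e^{-\tau_i p_t}\, dm_t,
\]
which combined with bounded distortion yields $m_t(X_i) \le K^t (\inf_{X_i}|DF|)^{-t} e^{\tau_i p_t} m_t(X)$. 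Applying the same identity at $t = 1$ (where $p_1 = 0$) gives $\inf_{X_i}|DF| \ge K^{-1} m_1(X)/m_1(X_i)$, so that
\[
m_t(X_i) \le C_1\, (m_1(X_i))^t\, e^{\tau_i p_t},
\]
where $C_1$ is bounded for $t \in [t_0, t_1]$ by continuity of $t \mapsto m_t(X)$ and $t \mapsto p_t$.

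I then invoke three further uniform features of the scheme for $H \in \H(h)$: the Lebesgue tail $m_1(\tau > n) \le C_2 e^{-\alpha_1 n}$ from \cite{BDM}; a uniform expansion bound $|DF|_{X_i} \ge e^{\lambda \tau_i}$ coming from Collet--Eckmann hyperbolicity together with bounded distortion, so that $m_1(X_i) \le K' e^{-\lambda \tau_i}$; and a subexponential complexity bound $N_k := \#\{i:\tau_i = k\} \le C_3 e^{\epsilon k}$ for any preassigned small $\epsilon > 0$. For $t \ge 1$, I use $(m_1(X_i))^t \le (K')^{t-1} e^{-\lambda(t-1)\tau_i} m_1(X_i)$ and sum over $\tau_i = k$, invoking the Lebesgue tail, to obtain $\sum_{\tau_i = k}(m_1(X_i))^t \le C\, e^{-(\alpha_1 + \lambda(t-1))k}$. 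For $t < 1$, Jensen's inequality applied to the concave function $x \mapsto x^t$ gives
\[
\sum_{\tau_i = k}(m_1(X_i))^t \le N_k^{1-t}\,(m_1(\tau=k))^t \le C\, e^{(\epsilon(1-t) - \alpha_1 t)k}.
\]
Multiplying by $e^{k p_t}$ and summing over $k > n$ produces the desired bound, provided the resulting exponent is uniformly negative.

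The main obstacle is securing uniform positivity of the decay rate over $t \in [t_0, t_1]$ and over $H \in \H(h)$. For $t \ge 1$ this is immediate: $p_t \le p_1 = 0$, so the rate $\alpha_1 + \lambda(t-1) - p_t$ is bounded below by $\alpha_1 > 0$. For $t < 1$, the rate $\alpha_1 t - \epsilon(1-t) - p_t$ must be bounded away from zero, and this is where the cutoff $t_0 > 0$ from Theorem~\ref{thm:lift} enters: one must have $p_{t_0} < \alpha_1 t_0 - \epsilon(1-t_0)$ for a suitably small $\epsilon$, after which continuity of $t \mapsto p_t$ on the compact interval $[t_0, t_1]$ produces a uniform $\alpha > 0$. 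A secondary but real technical point, which should already be implicit in Section~\ref{sec:tails}, is to verify that the constants $K$, $K'$, $\lambda$, $\alpha_1$, $C_2$, $C_3$ from the inducing construction depend only on $f$ and $h$ rather than on the specific hole $H \in \H(h)$; this is what allows the final $C_0, \alpha$ to depend only on $t_0, t_1, h$ as claimed.
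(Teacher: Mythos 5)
Your reduction from $m_t$ to $m_1$ via conformality and bounded distortion is the right idea, but two things break.

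First, there is a sign error in the very first formula: conformality of $m_t$ with respect to $\phi_t = -t\log|Df| - p_t$ gives $\frac{dm_t}{d(m_t\circ f^{\tau_i})} = e^{S_{\tau_i}\phi_t}$, hence $m_t(X) = \int_{X_i}|DF|^t\,e^{+\tau_i p_t}\,dm_t$, not $e^{-\tau_i p_t}$. This propagates so that the correct key estimate is $m_t(X_i) \le C_1(m_1(X_i))^t e^{-\tau_i p_t}$. With the sign fixed, your claim that the case $t\ge 1$ is ``immediate because $p_t\le 0$'' collapses: the rate becomes $\alpha_1 + \lambda(t-1) + p_t$ with $p_t<0$, so you genuinely need an upper restriction on $t_1$ (exactly as the paper notes). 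Symmetrically, for $t<1$ the correct sign actually helps, since $p_t>0$.

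Second, and more seriously, your subexponential complexity bound $N_k = \#\{i : \tau_i = k\} \le C_3 e^{\epsilon k}$ for arbitrarily small $\epsilon$ is not available and is generally false in this setting. What one can prove is an \emph{exponential} bound with a fixed rate: each branch $X_i$ contains a fixed point $x_i$ of $f^{\tau_i}$, so by definition of $\chi_M$ and bounded distortion $\sup_{X_i}|DF|\le C e^{\chi_M\tau_i}$, giving $m_1(X_i)\ge C^{-1}e^{-\chi_M\tau_i}m_1(X)$ and hence $N_k\le C'e^{(\chi_M-\alpha_1)k}$. Plugging this into your Jensen step reproduces the rate $\alpha_1 - (1-t)\chi_M + p_t$, which is exactly the constraint in the paper and requires taking $t_0$ close enough to $1$. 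The paper avoids the combinatorics entirely: it factors $e^{S_\tau\phi_t} = e^{(t-1)S_\tau\varphi}\cdot e^{S_\tau\varphi-\tau p_t}$, bounds the first factor by $e^{(1-t)\chi_M\tau}$ (for $t<1$) or $e^{(1-t)\chi_m\tau}$ (for $t>1$) using those periodic points, and identifies the second factor with $m_1(X_i)/m_1(X)$. The essential ingredient you are missing is this use of periodic points in each $X_i$ to get a two-sided control on $|DF|$; without it the $t<1$ case has a genuine gap, and with it your argument coincides with the paper's.
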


We denote by $\cont^p(I)$ the space of H\"older continuous functions on $I$
with exponent $p$.  In what follows, we assume that $p \ge \beta/\log \xi$, where
$\xi >1$ is defined in (A1) in Section~\ref{ssec:prop proof} and $0<\beta < \alpha$ from
Section~\ref{sec:LY} defines
the metric on $\Delta$.\footnote{In fact, $\beta$ can be chosen as small as one likes,
allowing $p$ to be arbitrarily small.  The price to pay is that then $h$ must be small
according to (P4) of Section~\ref{sec:LY}.} 
For relevant $t$, let
$g^0_t$ denote the density of the equilibrium measure $\mu_t$ for $f$ with respect to $m_t$.
Note that $g^0_t \notin \cont^p(I)$ due to spikes corresponding to the critical orbits.

\begin{theorem}
\label{thm:accim}
Let $f$, $t_0<t_1$ and $H$ be as in Theorem~\ref{thm:lift} and recall that $\phi_t = t\vf - p_t$.  
Then for each $t \in [t_0, t_1]$,
$\Lp_{\phi_t^H}$ has a unique simple eigenvalue $\lambda^H_t \le 1$ of 
maximum modulus whose corresponding eigenvector $g_t^H$ defines an absolutely continuous
conditionally invariant measure $\mu^H_t = g_t^H m_t$.  

In addition, there exist constants $C_* >0$, $\sigma <1$ such that for any $\psi \in \cont^p(I)$ with
$\psi>0$,
\beq 
\label{eq:limit}
\lim_{n \to \infty} \left| \frac{\Lp_{\phi_t^H}^n \psi}{| \Lp_{\phi_t^H}^n \psi |_{L^1(m_t)}}  - g_t^H \right|_{L^1(m_t)} \le C_* \sigma^n | \psi |_{\cont^p(I)}  \; \; \; \mbox{for all } n \geq 0.
\eeq
The same limit holds for the sequence $\frac{\Lp_{\phi_t^H}^n (\psi g^0_t)}{| \Lp_{\phi_t^H}^n (\psi g^0_t) |_{L^1(m_t)}}$ and all positive $\psi \in \cont^p(I)$.
In particular $\e(\psi m_t) = \e(\psi \mu_t) = -\log \lambda^H_t$
for all $\psi \in \cont^p(I)$, $\psi >0$.
\end{theorem}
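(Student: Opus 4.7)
The plan is to work on the Young tower $\Delta$ coming from the inducing scheme $(X, F, \tau, H)$ of Theorem~\ref{thm:lift}, apply the abstract spectral theory for towers with holes recalled in Section~\ref{sec:LY} (based on \cite{BDM}), and then project the resulting picture back to $I$. The key inputs are the uniform exponential tails of Proposition~\ref{prop:uniform tails} and the liftability statements of Theorem~\ref{thm:lift}(a)--(c).

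First I would lift $m_t$ to a reference measure $\tm_t$ on $\Delta$ by setting $\tm_t|_{\Delta_0}$ proportional to $m_t|_X$ and transporting to the levels $\Delta_\ell$ via $f_\Delta$, so that $\tm_t$ is conformal for the tower map with the pulled-back potential. Part (c) of Theorem~\ref{thm:lift} guarantees that $m_t$-a.e.\ point lifts, so $\pi_*\tm_t$ is comparable to $m_t$ after normalization. I would then define the punctured tower transfer operator $\hLp_t$ respecting $\tH = \pi^{-1}H$. Proposition~\ref{prop:uniform tails} ensures that $\hLp_t$ satisfies a Lasota--Yorke inequality on a Banach space of functions with controlled variation along the tower levels, with constants uniform in $t \in [t_0,t_1]$ and $H \in \H(h)$. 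From the abstract results of Section~\ref{sec:LY} one extracts a simple leading eigenvalue $\lambda^H_t \le 1$, a strictly positive eigenfunction $\tg^H_t$, and a spectral gap yielding an exponential contraction rate $\sigma<1$ on the complement of the one-dimensional eigenspace.

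To transfer the picture back to $I$, set $g^H_t = d\pi_*(\tg^H_t \tm_t)/dm_t$; the exponential tails make the series defining $\pi_*$ converge in $L^1(m_t)$. Conformality of $m_t$ intertwines $\pi_*\circ\hLp_t$ with $\Lp_{\phi_t^H}$, so $\hLp_t\tg^H_t = \lambda^H_t\tg^H_t$ descends to $\Lp_{\phi_t^H} g^H_t = \lambda^H_t g^H_t$ and produces the conditionally invariant measure $\mu^H_t = g^H_t m_t$, as in Section~\ref{ssec:transfer}. Simplicity of $\lambda^H_t$ on $I$ follows from the simplicity on $\Delta$ by a standard positivity argument using topological mixing and (H2). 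For the convergence statement, one lifts $\psi \in \cont^p(I)$ to $\tilde\psi = \psi\circ\pi$ on $\Delta$ and verifies that $\tilde\psi$ lies in the tower Banach space with norm controlled by $|\psi|_{\cont^p(I)}$: the exponent condition $p \ge \beta/\log\xi$ is precisely what converts H\"older continuity on $I$ into the correct decay rate with respect to the symbolic metric on $\Delta$. Applying the spectral gap of $\hLp_t$ to $\tilde\psi$ and pushing forward by $\pi$ yields \eqref{eq:limit}. The variant with $\psi g^0_t$ is handled in the same way: although $g^0_t$ has spikes at critical orbits and is not H\"older on $I$, it is the projection of a bounded, piecewise H\"older eigenfunction $\tilde g^0_t$ of the closed-system tower operator (constructed in \cite{ITeq, ITrev}), so $\tilde\psi \cdot \tilde g^0_t$ still belongs to the Banach space on $\Delta$ and the same argument applies. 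Finally, $\e(\psi m_t) = \e(\psi\mu_t) = -\log\lambda^H_t$ follows by integrating \eqref{eq:limit}: the $L^1$-convergence of the normalized iterates forces $|\Lp_{\phi_t^H}^n\psi|_{L^1(m_t)}$ to decay as $(\lambda^H_t)^n$ up to a bounded multiplicative constant.

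The main obstacle I expect is the simultaneous uniformity of all of these estimates: the Lasota--Yorke constants, the embedding of $\cont^p(I)$ and of the weighted class $\cont^p(I)\cdot g^0_t$ into the tower Banach space, and the projection constants relating $\tm_t$-densities on $\Delta$ to $m_t$-densities on $I$ that must vanish on $H$, all need to be controlled uniformly over $t \in [t_0,t_1]$ and $H \in \H(h)$. This is what ties the argument to the careful tower construction of Theorem~\ref{thm:lift} and to the uniform tail estimate of Proposition~\ref{prop:uniform tails}, rather than to an off-the-shelf application of \cite{BDM}.
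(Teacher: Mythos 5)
Your overall architecture --- verify the tower hypotheses, invoke the spectral gap of Theorem~\ref{thm:spectrum} on $\Delta$, and project back to $I$ --- is the same as the paper's, but there is a genuine gap at the projection step. The intertwining relation one actually has is $\pa_{\pi,t}\bigl(\hLp^n_{\phi_{\Delta,t}}\tilde\psi\bigr)=\hLp^n_{\phi_t}\bigl(\pa_{\pi,t}\tilde\psi\bigr)$, where $\pa_{\pi,t}$ divides by the Jacobian $J_t\pi$ and sums over the (overlapping) projections of the tower levels. In general $\pa_{\pi,t}(\psi\circ\pi)\neq\psi$, because $\pi_*\bm_t\neq m_t$ and the sets $\pi(\hDelta_{\ell,j})$ overlap. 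So lifting $\psi$ to $\tilde\psi=\psi\circ\pi$, applying the spectral gap, and pushing forward yields convergence of $\hLp^n_{\phi_t^H}\bigl(\pa_{\pi,t}(\psi\circ\pi)\bigr)$, not of $\hLp^n_{\phi_t^H}\psi$, which is the quantity in \eqref{eq:limit}. What is needed is the reverse inclusion $\cont^p(I)\subset\pa_{\pi,t}(\B_0)$: every H\"older $\psi$ must be realized as the projection of \emph{some} $\bar\psi\in\B_0$. This is exactly the content of Lemma~\ref{lem:A2}, which establishes the analogue of (A2) for $J_t\pi$ --- in particular the uniform sup and Lipschitz bounds on $J_t\pi$ restricted to the covering family $\mathcal{I}$, proved via $J_t\pi=(J_1\pi)^te^{\ell p_t}$ and, for $t<1$, the inequality $|a^t-b^t|\le|a-b|^t$ together with the constraint $\beta\le t_0\log\xi$. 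Without this step the convergence statement for the given $\psi$ does not follow.

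A second, smaller omission: before Theorem~\ref{thm:spectrum} applies you must verify (P4), the smallness of the hole in the weighted sum $\sum_{\ell}\bm_t(\tH_\ell)e^{\beta(\ell-1)}$; the uniform tail estimate of Proposition~\ref{prop:uniform tails} gives (P1) but not (P4). The paper's Lemma~\ref{lem:uniform P4} handles this by splitting the $\tH_{\ell,j}$ according to whether they arise during a bound or a free period and using (H1) to force bound intersections to occur only at large $\ell$. Your treatment of $\psi g^0_t$ is essentially the paper's (realize $g^0_t$ as $\pa_{\pi,t}\tilde g^0_t$ for the eigenfunction $\tilde g^0_t$ of the unpunctured tower operator, i.e.\ Section~\ref{sec:LY} with $H=\emptyset$, rather than importing it from elsewhere), modulo the same caveat about the projection operator.
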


In fact, the limit \eqref{eq:limit} holds for any $\psi$ which projects down from the 
relevant function space on the tower.  Since the result in this generality is technical to state,
we refer the reader to Proposition~\ref{prop:conv} and Remark~\ref{remark:conv}
for more precise statements.

\begin{theorem}
\label{thm:variational}
Under the hypotheses of Theorem~\ref{thm:accim}, there exist $\bar t_0\in [t_0,1)$ 
and $\bar t_1\in (1,  t_1]$, such that for all $ t\in [\bar t_0, \bar t_1]$, 
\begin{equation}
\label{eq:var}
-\e(m_t) = \sup_{\mu \in \G_f^H} \left\{ h_\mu(f) + t\int \vf d\mu \right\} - P_{\M_f}(t\vf) . 
\end{equation}
Moreover, the following limit defines a measure $\nu_t^H$,
\[
\nu_t^H(\psi) = \lim_{n \to \infty} e^{\e(m_t)n} \int_{\I^n} \psi g_t^H dm_t, \; \; \; \mbox{for all }
\psi \in \cont^0(I).
\]
The measure $\nu_t^H$ belongs to $\G_f^H$ and 
attains the supremum in \eqref{eq:var}.
In addition, the limit \eqref{eq:limit} holds for any $\psi \in \cont^p(I)$ with 
$\nu_t^H(\psi) > 0$, and for $\psi g^0_t$ whenever $\nu_t^H(\psi g^0_t) >0$.
\end{theorem}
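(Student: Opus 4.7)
The plan is to realize $\nu_t^H$ as the projection of an $f_\Delta$-invariant probability measure on $\hDelta^\infty$ and then use the spectral gap on the tower established in Theorem~\ref{thm:accim} to verify the variational identity and the integral formula simultaneously. On the tower, Theorem~\ref{thm:accim} together with the Lasota--Yorke work of Sections~\ref{sec:LY}--\ref{sec:project} yields a simple leading eigenvalue $\lambda_t^H = e^{-\e(m_t)}$ with right eigenvector $\tg_t^H$ (projecting to $g_t^H$), a left eigenmeasure $\tnu_t^H$, and a spectral gap. Then $\tmu_t^H := \tg_t^H \tnu_t^H$, normalized, is $f_\Delta$-invariant and supported on $\hDelta^\infty$, and I set $\nu_t^H := \pi_*\tmu_t^H$.

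To identify $\nu_t^H$ with the limit in the statement, I combine the identity
\[
\int \Lp_{\phi_t^H}^n u\, dm_t = \int u\, \mathbf{1}_{\I^n}\, dm_t
\]
with the spectral decomposition $(\lambda_t^H)^{-n}\Lp_{\phi_t^H}^n(\psi g_t^H) \to \nu_t^H(\psi)\, g_t^H$ in $L^1(m_t)$ (after normalizing so that $\int g_t^H\, dm_t = 1$). The $f$-invariance of $\nu_t^H$ follows from the transfer operator Leibniz identity $\Lp_{\phi_t^H}((\psi\circ f)\, g_t^H) = \lambda_t^H\, \psi g_t^H$, which gives
\[
\int (\psi\circ f)\, g_t^H\, \mathbf{1}_{\I^n}\, dm_t = \lambda_t^H\int \psi\, g_t^H\, \mathbf{1}_{\I^{n-1}}\, dm_t,
\]
so that dividing by $(\lambda_t^H)^n$ and sending $n\to\infty$ gives $\nu_t^H(\psi\circ f) = \nu_t^H(\psi)$.

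The main technical step is to show $\nu_t^H \in \G_f^H$, i.e.\ $\nu_t^H(B_\ve(\partial H)) \le C\ve^\beta$ for some $\beta > 0$. Since the inducing scheme $(X,F,\tau,H)$ treats $\partial H$ as part of its singular set, $\pi^{-1}(B_\ve(\partial H))$ decomposes along tower levels $\hDelta_{\ell,j}$; high levels contribute exponentially little via the uniform tail bound of Proposition~\ref{prop:uniform tails} applied to $\tmu_t^H$, while the remaining low levels use (H1) and the uniform expansion (C1) to convert the small radius $\ve$ into a H\"older bound. With $\nu_t^H \in \G_f^H$ in hand, Abramov's formula on $(\Delta, f_\Delta, \tmu_t^H)$ together with the Gibbs description of the induced potential yields
\[
h(\nu_t^H) + \int \phi_t^H\, d\nu_t^H \;=\; \log \lambda_t^H \;=\; -\e(m_t),
\]
which is exactly \eqref{eq:var} attained at $\nu_t^H$. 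For the upper bound, any $\mu \in \G_f^H$ is supported on $\I^\infty$ by invariance: if $\mu$ lifts to the tower, the variational principle on $(\Delta, f_\Delta)$ gives $h(\mu) + \int \phi_t^H\, d\mu \le \log \lambda_t^H$, and if $\mu$ does not lift, Theorem~\ref{thm:lift}(d) delivers the strict inequality directly.

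Finally, the extension of \eqref{eq:limit} from strictly positive $\psi$ to $\psi \in \cont^p(I)$ with $\nu_t^H(\psi) > 0$ (and to $\psi g_t^0$ when $\nu_t^H(\psi g_t^0) > 0$) follows from the same spectral decomposition: $|\Lp_{\phi_t^H}^n \psi|_{L^1(m_t)}$ is asymptotically $\nu_t^H(\psi)(\lambda_t^H)^n$, so the normalizing denominator in \eqref{eq:limit} stays bounded away from $0$ for large $n$ and the proof of Theorem~\ref{thm:accim} applies without change. I expect the $\G_f^H$ membership step to be the main obstacle, since (H1) provides only orbit-level slow recurrence to $\partial H$ and upgrading it into a H\"older non-concentration bound on $\nu_t^H$ requires careful tracking of how the hole-induced cuts distribute across tower levels.
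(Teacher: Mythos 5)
Your construction of $\nu_t^H$ as the projection of the tower measure $\tnu_t$, its identification with the stated limit, the $f$-invariance, and the attainment of the supremum via Abramov's formula, the Gibbs property and Lemma~\ref{lem:project jac} all track the paper's argument in Section~\ref{sec:var proof}. But two of the three hard steps are not actually carried out. The membership $\nu_t^H\in\G_f^H$ is the paper's Proposition~\ref{prop:G^H_f}, and it is not the routine ``tails plus expansion'' estimate you sketch: one needs the Gibbs bounds of Lemma~\ref{lem:nu bounds} (which introduce the factor $\lambda_t^{-\tau^n}$ and force a quantitative smallness condition on $c_t=-\log\lambda_t+(1-t)\chi_M-p_t$), the observation that a cylinder meeting $N_\ve(\partial H)$ during a \emph{bound} period must, by (H1) and (C2), have return time at least of order $-\log\ve$, and a minimal-index cylinder decomposition with a separate counting argument for free pieces. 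You flag this as the main obstacle, and indeed as written it is a plan rather than a proof.

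The more serious issue is the upper bound $\sup_{\G_f^H}\{h_\mu(f)+t\int\vf\,d\mu\}\le -\e(m_t)+p_t$. Your route --- split into liftable measures (Proposition~\ref{prop:var tower}) and non-liftable ones (Theorem~\ref{thm:lift}(d)) --- is not what the paper does, and it should make you suspicious that, if it worked, it would prove the variational principle over all of $\M_f^H$ and render the restriction to $\G_f^H$ in \eqref{eq:var} pointless. The catch is that the dimension argument behind Theorem~\ref{thm:lift}(d) only controls measures carried by the low-dimensional set $\nolift_\Delta$; an ergodic measure can fail to lift for other reasons (for instance $\mu(X)>0$ but $\mu(\tau)=\infty$, since $\tau$ is an inducing time rather than a first-return time) without being concentrated on any set of dimension $\le D$. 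This is precisely the sense in which, as the authors say, liftability ``is not well understood,'' and it is why the paper instead proves Proposition~\ref{prop:lower bound new} directly for every $\mu\in\G_f^H$ by a volume estimate that never mentions liftability: the natural extension, the Ledrappier--Dobbs partition subordinate to $\tilde\P_n$ (Theorem~\ref{thm:dobbs}), Lemma~\ref{lem:avoid holes} --- which is where the defining non-concentration property of $\G_f^H$ is actually used --- Lemma~\ref{lem:large scale new}, and Shannon--McMillan--Breiman. To complete your argument you would either need to justify Theorem~\ref{thm:lift}(d) for \emph{all} non-liftable measures in $\G_f^H$, or fall back on a direct estimate of $m_t(\I^n)$ of the paper's type.
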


The next theorem characterizes the Hausdorff dimension of the survivor set according to
the Bowen formula.  

\begin{theorem}
\label{thm:hausdorff}
Under the hypotheses of Theorem~\ref{thm:variational}, 
dim$_H(\I^\infty) =$ dim$_H(\hDelta^\infty) = t^*$, where $t^*<1$ is the unique value of $t$
such that $\sup_{\mu \in \G_f^H} \{ h_\mu(f) + t \int \vf d\mu : - \mu(\vf) < \infty \} = 0$,
\ie such that $\e(m_t) = P_{\M_f}(t\vf)$. 
\end{theorem}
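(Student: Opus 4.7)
The plan has three parts --- uniqueness of $t^*$, the lower bound $\dim_H(\I^\infty)\ge t^*$, and the matching upper bound --- and I will use Theorem~\ref{thm:lift}(b) freely to switch between $\I^\infty$ and $\hDelta^\infty$.

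\emph{Uniqueness of $t^*$.}  Set $Q(t):=\sup_{\mu\in\G_f^H}\{h_\mu(f)+t\int\vf\,d\mu\}$; by Theorem~\ref{thm:variational} we have $Q(t)=p_t-\e(m_t)$ on $[\bar t_0,\bar t_1]$.  As a pointwise supremum of affine functions of $t$, $Q$ is convex and hence continuous, and it is strictly decreasing because the maximizer $\nu_t^H$ from Theorem~\ref{thm:variational} has strictly positive Lyapunov exponent $\chi(\nu_t^H):=\int\log|Df|\,d\nu_t^H$ (by Ruelle's inequality and positive entropy of $\nu_t^H$).  The hole forces genuine escape at $t=1$, so $\e(m_1)>0=p_1$ and $Q(1)<0$; shrinking $\bar t_0$ if necessary, $Q(\bar t_0)>0$ since $\nu_{\bar t_0}^H$ has positive entropy while $\bar t_0\chi(\nu_{\bar t_0}^H)$ is small.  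Hence there is a unique $t^*\in(\bar t_0,1)$ with $Q(t^*)=0$, equivalently $\e(m_{t^*})=p_{t^*}$.

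\emph{Lower bound.}  Let $\nu:=\nu_{t^*}^H$ be the measure in $\G_f^H$ supplied by Theorem~\ref{thm:variational}.  It is $f$-invariant and ergodic with $\nu(H)=0$, so $\supp(\nu)\subset\I^\infty$, and $Q(t^*)=0$ yields $h(\nu)=t^*\chi(\nu)$ with $\chi(\nu)>0$.  The Hofbauer--Raith/Ledrappier dimension formula for ergodic invariant measures of positive Lyapunov exponent on the interval then gives $\dim_H(\nu)=h(\nu)/\chi(\nu)=t^*$, hence $\dim_H(\I^\infty)\ge t^*$.

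\emph{Upper bound.}  The dimension of $\hDelta^\infty$ equals that of the base-trace $X^\infty:=\hDelta^\infty\cap\Delta_0$, since $f_\Delta$ is the identity between consecutive levels within a column and levels are Lipschitz copies of subsets of $\Delta_0$.  The set $X^\infty$ is the survivor set of the induced punctured Markov map $\hF$, which has countably many full branches with bounded distortion.  Fix $t>t^*$.  Bounded distortion for $\hF^n|_{X_{n,i}}$ gives $\diam(X_{n,i})^t\asymp|D\hF^n(x_i)|^{-t}$, so
\[
\sum_{X_{n,i}\cap X^\infty\ne\emptyset}\diam(X_{n,i})^t\ \asymp\ \sum_i|D\hF^n(x_i)|^{-t}\ =\ \hLp_{-t\log|D\hF|}^n\mathbf{1}(y)
\]
for any $y\in X$.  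Abramov's formula identifies the induced pressure $P_{\hF}(-t\log|D\hF|)$ with $Q(t)$ on measures liftable to $\Delta$; Theorem~\ref{thm:lift}(d), together with the uniform tail estimate of Proposition~\ref{prop:uniform tails}, ensures that the $Q$-maximizer lifts and that the countable-Markov variational formula applies.  Hence $P_{\hF}(-t\log|D\hF|)=Q(t)<0$ for $t>t^*$; the cover-sum above decays exponentially in $n$, and a standard covering argument yields $\dim_H(X^\infty)\le t$.  Letting $t\downarrow t^*$ completes the upper bound.

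The hardest step is the Abramov-based identification $P_{\hF}(-t\log|D\hF|)=Q(t)$: one must check that the $\G_f^H$-supremum is realized by a measure liftable to the tower --- precisely Theorem~\ref{thm:lift}(d) --- and that the countable-branch Gibbs/pressure theory for $\hF$ with the punctured potential is valid, using the uniform exponential tail of Proposition~\ref{prop:uniform tails} together with bounded distortion.
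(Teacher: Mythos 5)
Your argument is correct in outline but takes a more self-contained route than the paper. The paper disposes of the theorem by coding the tower as a countable Markov shift and citing Iommi's Bowen formula as a black box, which yields $\dim_H(\hDelta^\infty)=t^*$ with $P_{\M^H_{f_\Delta}}(t^*\vf_\Delta)=0$; the only remaining work there is translating that Markov-shift pressure into the pressure over $\G_f^H$, using that $\nu^H_{t^*}$ lies in $\G_f^H$ and lifts to $\Delta$ --- exactly the translation you perform via Abramov and Theorem~\ref{thm:lift}(d). You instead re-derive the content of Iommi's theorem by hand: the lower bound from $\dim_H(\nu^H_{t^*})=h(\nu^H_{t^*})/\chi(\nu^H_{t^*})=t^*$ (Hofbauer's formula, which the paper invokes elsewhere), and the upper bound from the cylinder cover together with decay of partition sums when the induced pressure is negative. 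Your route makes visible where each hypothesis enters; the paper's route outsources the countable-Markov pressure theory (existence of the Gurevich limit for the partition sums and its identification with the variational pressure for the punctured locally H\"older potential), which is precisely the part you flag as hardest.

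Two points need attention. The substantive one is in your upper bound: the $n$-cylinders of $\hF$ cover only those points of $X^\infty=\hDelta^\infty\cap\Delta_0$ that return to the base at least $n$ times, so your cover misses $X^\infty\cap\bigcup_{k\ge0}\hF^{-k}(\nolift_\Delta)$, the surviving points whose orbits eventually stop returning. As written the covering argument only gives $\dim_H(X^\infty)\le\max\{t,D\}$ with $D$ from Theorem~\ref{thm:lift}(a), and there is no a priori reason that $D<t^*$. The repair is available from your own lower bound: $\dim_H(\I^\infty)\ge\dim_H(\nu^H_1)=1+\log\lambda^H_1>D$ for $h$ small (as in the proof of Theorem~\ref{thm:lift}(b)), and $\nu^H_1$ charges only the infinitely-returning part, so $t^*>D$ and the maximum is $t^*$ --- but this must be said. (The paper has the same issue, hidden in the phrase ``code it as a countable Markov shift,'' and resolves it the same way through Theorem~\ref{thm:lift}(a),(b).) The minor point is uniqueness: you cannot ``shrink $\bar t_0$,'' which is fixed by the tower construction; rather, for fixed $\bar t_0<1$ one has $p_{\bar t_0}>0$ while $\e(m_{\bar t_0})\to0$ uniformly in $t$ as $h\to0$, so $Q(\bar t_0)>0$ once the hole is small enough --- an implicit smallness condition. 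Strict monotonicity of $Q$ is cleaner from $\chi(\mu)\ge\chi_m>0$ for every $\mu\in\M_f$ (used in the proof of Theorem~\ref{thm:lift}(d)) than from Ruelle's inequality applied to the maximizer, whose positive entropy is not known a priori.
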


To state our final results regarding the zero-hole limit, in what follows we will
take our holes
to be symmetric intervals 
$$H_\eps=H_\eps(z):=(z-\eps, z+\eps)$$  
around a point $z\in I$.  
To state our next result, in the case that $z$ is a periodic point, 
we will need to make an assumption on $z$, which we call condition 

\begin{enumerate}
  \item[{\bf (P)}] The density $\frac{d\mu_t}{dm_t}$ is bounded at $z$, for the relevant $t$, and 
  condition \eqref{eq:slow approach} given in Section~\ref{zero hole} is satisfied.
\end{enumerate}

\begin{theorem} 
Let $t \in [t_0, t_1]$, $z\in I$ and suppose $\Crit_{\mbox{\tiny hole}} := \{ z \}$ satisfies (H2).
Fix $\delta = \delta_0 > 0$ appearing in (C1)-(C2).
Then there exists $h >0$ such that
\begin{equation*}
\frac{\e(m_t, H_\ve(z))}{\mu_t(H_\ve(z))}\to \begin{cases} 1 & \text{ for } \mu_t\text{-a.e. } z,\\
 1-e^{S_p\phi_t(z)} & \text{ if } z \text{ is periodic of (prime) period } p \text{ and (P) holds},
 \end{cases}
 \end{equation*}
 where the limit is taken as $\ve \to 0$ over $H_\ve(z) \in \H(h)$.
 \label{thm:zero hole}
 \end{theorem}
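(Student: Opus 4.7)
The plan is to identify the ratio $\e(m_t, H_\ve)/\mu_t(H_\ve)$ with an extremal-index-type quantity, and then evaluate it by a perturbative analysis of the conditionally invariant density on the Young tower. First, by Theorem~\ref{thm:accim} we have $\e(m_t, H_\ve) = -\log \lambda_t^{H_\ve}$. Integrating the eigenvalue equation $\Lp_{\phi_t^{H_\ve}} g_t^{H_\ve} = \lambda_t^{H_\ve} g_t^{H_\ve}$ against $m_t$ and using $\phi_t$-conformality of $m_t$ yields the identity
\[
1 - \lambda_t^{H_\ve} \;=\; \mu_t^{H_\ve}(H_\ve),
\]
which is just the restatement $\mu_t^{H_\ve}(\I^1) = \lambda_t^{H_\ve}$ of the conditionally invariant property. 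Combined with weak-$*$ convergence $\mu_t^{H_\ve} \to \mu_t$ as $\ve \to 0$ (which forces $\lambda_t^{H_\ve} \to 1$), this gives $\e(m_t, H_\ve) = \mu_t^{H_\ve}(H_\ve)(1 + o(1))$, and the theorem reduces to computing the extremal index
\[
\theta(z) \;:=\; \lim_{\ve \to 0} \frac{\mu_t^{H_\ve}(H_\ve)}{\mu_t(H_\ve)}.
\]

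Second, I would evaluate $\theta(z)$ by a perturbation expansion of $g_t^{H_\ve}$ around $g_t^0 := d\mu_t/dm_t$. Writing the punctured transfer operator on the tower as $\hat\Lp_\ve = \Lp_{\phi_t} - Q_\ve$, where $Q_\ve \psi = \Lp_{\phi_t}(1_{\tilde H_\ve}\psi)$ encodes mass lost through the hole, and iterating the resolvent identity, the uniform spectral gap from Theorem~\ref{thm:accim} together with the uniform tail bound of Proposition~\ref{prop:uniform tails} gives a controlled expansion of $g_t^{H_\ve}$ in powers of $Q_\ve$, and hence (after testing against $1_{H_\ve}$) of $\mu_t^{H_\ve}(H_\ve)$ around $\mu_t(H_\ve)$. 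For $\mu_t$-a.e.\ $z$, a shrinking-target Borel--Cantelli argument driven by exponential decay of correlations for $\mu_t$ on the tower shows that every correction term in the expansion is $o(\mu_t(H_\ve))$, so $\theta(z) = 1$. For $z$ a repelling periodic point of prime period $p$ satisfying (P), the dominant correction is the first-return contribution at time $p$: by $\phi_t$-conformality of $m_t$ and bounded distortion of $f^p$ on $H_\ve$,
\[
m_t\bigl(H_\ve \cap f^{-p}(H_\ve)\bigr) \;=\; e^{S_p\phi_t(z)}\, m_t(H_\ve)\,(1 + o(1)),
\]
and the boundedness of $g_t^0$ at $z$ from (P) converts this to the same asymptotic for $\mu_t$. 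Higher-period returns at times $2p,3p,\ldots$ contribute at strictly higher order via a geometric-series bound with ratio $e^{S_p\phi_t(z)} < 1$, and returns at times not divisible by $p$ drop out by the slow-approach condition~\eqref{eq:slow approach}. The net result is $\theta(z) = 1 - e^{S_p\phi_t(z)}$.

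The main obstacle is uniformity of all the above estimates across the family $\{H_\ve(z)\}_{\ve>0}$. The Young tower $\Delta^\ve$ depends on $\ve$ in a potentially discontinuous way---return times can jump under arbitrarily small perturbations of the hole---so the Lasota--Yorke-type inequalities, the spectral gap, and the convergence rate in \eqref{eq:limit} must all be controlled uniformly in $\ve$; this is exactly what the machinery developed in Sections~\ref{sec:tails}--\ref{sec:project} is designed to provide. A secondary subtlety is that for $t\ne 1$ the measure $\mu_t$ is singular with respect to Lebesgue, so the classical ``Jacobian of $f^p$ at the periodic point'' has to be read through $m_t$ as $e^{S_p\phi_t(z)}$ rather than $|Df^p(z)|^{-1}$; the bounded-density portion of condition (P), together with \eqref{eq:slow approach}, is precisely what ensures this asymptotic evaluation is robust along the family of holes.
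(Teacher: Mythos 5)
Your skeleton matches the paper's: reduce to $\lim_{\ve\to 0}(1-\lambda_t^{H_\ve})/\mu_t(H_\ve)$, run a spectral-gap perturbation argument on the tower comparing $g_t^{H_\ve}$ with $g_t^0$, and isolate the first-return contribution $e^{S_p\phi_t(z)}$ at a periodic point. (One small imprecision at the outset: $\mu_t^{H_\ve}(H_\ve)=0$ literally, since $g_t^{H_\ve}$ vanishes on the hole; the correct identity is $1-\lambda_t^{H_\ve}=\int_{\tH}g_t^{H_\ve}\,d\bm_t$ after extending $g_t^{H_\ve}$ to $\tH$ by one pull-back, which is what the paper does before writing the decomposition \eqref{eq:hole}.)

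The genuine gap is that your expansion produces error terms measured in the $\|\cdot\|_\B$-norm, and when you ``test against $1_{H_\ve}$'' on the tower these are bounded only by the \emph{weighted} tower-mass of the lifted hole, $\sum_{\ell}e^{\beta\ell}\,\bm_t(\tH_\ell)$. You assert that the machinery of Sections~\ref{sec:tails}--\ref{sec:project} supplies the needed uniformity, but it does not: (P4) and Proposition~\ref{prop:uniform tails} only give that this sum is small in $h$, not that it is $O(\mu_t(H_\ve))$. Since $\tH$ sits at arbitrarily high levels $\ell$ with weights $e^{\beta\ell}$, and the tower itself changes discontinuously with $\ve$, there is no a priori reason the weighted sum is comparable to $\mu_t(H_\ve)\sim \ve^{s_t}$. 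Establishing $\sum_\ell e^{\beta\ell}\bm_t(\tH_\ell)\le C\mu_t(H_\ve)$ is exactly Lemma~\ref{lem:uniform hole}, the technical core of the zero-hole argument; its proof needs the slow-approach condition \eqref{eq:slow approach} (to push bound-period intersections with $H_\ve$ to levels $\ell\gtrsim -\varsigma^{-1}\log|H_\ve|$, where the tail bound takes over) together with a free/bound decomposition and the growth-time combinatorics of \cite{DHL}. Without this lemma every ``$o(\mu_t(H_\ve))$'' claim in your expansion is unjustified. Two smaller points: the role you assign to \eqref{eq:slow approach} is misplaced --- it concerns the approach of the \emph{critical orbit} to $z$, not self-returns of $H_\ve$; returns at times not divisible by $p$ (and, in the aperiodic case, all returns up to a fixed time $n$) are excluded simply by shrinking $H_\ve$ around the non-recurrent point $f^k(z)\ne z$, with no Borel--Cantelli needed. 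What \emph{does} require a Borel--Cantelli-type argument is the genericity of \eqref{eq:slow approach} itself (Lemma~\ref{lem:typical}), which your a.e.\ statement silently relies on.
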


In the following result, we show that our conditions for Theorem~\ref{thm:zero hole}
are met in a reasonable family of maps.
Consider the logistic family $f_\lambda:x\mapsto \lambda x(1-x)$ on $I$.  
Recall that if $f_\lambda$ has a hyperbolic periodic cycle 
$\{z_\lambda, f_\lambda(z_\lambda), \ldots, f_\lambda^{n-1}(z_\lambda)\}$, 
then for all nearby $\lambda'$, this cycle persists in the sense that there is a cycle 
$\{z_{\lambda'}, f_{\lambda'}(z_{\lambda'}), \ldots, f_{\lambda'}^{n-1}(z_{\lambda'})\}$ 
which depends analytically on the parameter and which converges to the original 
cycle as $\lambda'\to \lambda$.  This family of cycles is called the 
\emph{hyperbolic continuation} of the original cycle.

For this family of maps, we denote by $m_{\lambda,t}$ the $(\vf_t - p_t)$-conformal measure
for $f_\lambda$ and by $\mu_{\lambda,t} \ll m_{\lambda,t}$ its equilibrium state,
when it exists.
 
\begin{theorem}
There is a positive measure set of parameters $\Omega\subset (3,4]$ and an interval $[t_0, t_1]\ni1$ such that for $t \in [t_0,t_1]$ and $\lambda \in \Omega$, the map $f_\lambda$ 
has an equilibrium state $\mu_{\lambda, t}\ll m_{\lambda, t}$,  and
\begin{itemize}
\item[(a)] for $\mu_{\lambda, t}$-a.e. $z\in I$
$$
\frac{\e(m_{\lambda, t}, H_\ve(z))}{\mu_{\lambda, t}(H_\ve(z))}\to 1;$$
\item[(b)] if $z_4$ is a periodic point of (prime) period $p$ for $f_4$, then there exists a positive measure family of parameters $\Omega(z_4)\subset \Omega$ such that for $\lambda \in \Omega(z_4)$ and for  $z_\lambda$ denoting the hyperbolic continuation of $z_4$, 
$$
\frac{\e(m_{\lambda, t}, H_\ve(z_\lambda))}{\mu_{\lambda, t}(H_\ve(z_\lambda))}\to 1-e^{S_p\phi_t(z_\lambda)}.$$
\end{itemize}
In both limits above, (H2) is assumed to hold for $\Crit_{\mbox{\tiny hole}} = \{ z \}$, and
the limit is taken as $\ve \to 0$ over holes $H_\ve(z) \in \H(h)$, 
for $h>0$ sufficiently small.
  \label{thm:zero hole adapted}
\end{theorem}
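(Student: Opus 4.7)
The plan is to reduce Theorem~\ref{thm:zero hole adapted} to Theorem~\ref{thm:zero hole} by exhibiting a positive Lebesgue measure set of parameters $\lambda$ for which the map $f_\lambda$ falls within the scope of our general setup. The starting point is the classical Benedicks--Carleson theorem, sharpened by Jakobson and Tsujii, giving a positive measure set $\Omega \subset (3,4]$ such that each $f_\lambda$, $\lambda \in \Omega$, is Collet--Eckmann and satisfies (C1)--(C2). For such $\lambda$, the results of \cite{BTeqnat, ITeq, ITrev} provide an interval $[t_0,t_1] \ni 1$ on which an equilibrium state $\mu_{\lambda, t}$ for $\vf_t = -t\log|Df_\lambda|$ exists, is unique in the relevant class, and is absolutely continuous with respect to the conformal measure $m_{\lambda,t}$. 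A uniform compactness argument, using lower semicontinuity of the Collet--Eckmann exponent, allows us to take $[t_0,t_1]$ independent of $\lambda \in \Omega$ after possibly restricting to a subset of positive measure.

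For part (a), fix $\lambda \in \Omega$ and apply Theorem~\ref{thm:zero hole} pointwise. The set of $z \in I$ for which (H2)(a) can fail with $\Crit_{\mbox{\tiny hole}} = \{z\}$ is the countable set $\bigcup_{k \geq 0} f_\lambda^{-k}(\Crit_c)$, and conditions (H2)(b), (H2)(c) and (H1) are open conditions in $(z,\ve)$ automatically satisfied for all sufficiently small $\ve > 0$ once (H2)(a) holds. Since $\mu_{\lambda, t}$ is non-atomic for the relevant $t$, this exceptional set has zero $\mu_{\lambda,t}$-measure, and the first alternative of Theorem~\ref{thm:zero hole} yields the desired limit for $\mu_{\lambda,t}$-a.e.~$z$.

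For part (b), let $z_4$ be a hyperbolic periodic point of $f_4$ of prime period $p$. By the implicit function theorem, the hyperbolic continuation $\lambda \mapsto z_\lambda$ is real analytic on a neighborhood $U$ of $\lambda = 4$, and $\orb(z_\lambda)$ remains uniformly repelling. The real work is to produce a positive measure subset $\Omega(z_4) \subset \Omega \cap U$ on which the auxiliary condition (P) holds at $z_\lambda$: namely, the density $g^0_{\lambda,t} = d\mu_{\lambda,t}/dm_{\lambda,t}$ is bounded at $z_\lambda$, and the quantitative slow approach condition \eqref{eq:slow approach} is satisfied. For this I would adapt the inductive parameter-exclusion scheme of Benedicks--Carleson: for each $n$, the Lebesgue measure of
\[
\left\{ \lambda \in \Omega \cap U : \min_{0 \leq j < p} |f_\lambda^n(1/2) - f_\lambda^j(z_\lambda)| < e^{-\vartheta n} \right\}
\]
is summable in $n$, as a consequence of transversality of the curve $\lambda \mapsto f_\lambda^n(1/2)$ to the continuation of $\orb(z_\lambda)$ (valid as long as $z_4$ is not in the forward orbit of $1/2$ under $f_4$, which we may arrange since the postcritical set is countable). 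A Borel--Cantelli argument leaves a set $\Omega(z_4)$ of positive measure on which \eqref{eq:slow approach} holds. Boundedness of $g^0_{\lambda,t}$ at $z_\lambda$ follows from the Gibbs property of the induced measure, itself a consequence of Proposition~\ref{prop:uniform tails}, together with the fact that $z_\lambda$ lies a definite distance from the postcritical set. Theorem~\ref{thm:zero hole} then applies and yields the periodic limit formula.

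The principal obstacle is the simultaneous preservation of all conditions in part (b): the Collet--Eckmann and slow-recurrence conditions (C1)--(C2) that define $\Omega$, the genericity (H2)(a)--(c) for $z_\lambda$, and the quantitative slow approach \eqref{eq:slow approach}. Each condition excludes a set of parameters, and one must show that the intersection of the remaining sets still has positive measure. The delicate accounting lies in the transversality step: one must control the critical orbit's approach to $\orb(z_\lambda)$ while retaining the quantitative control of recurrence to $\Crit_c$ inherited from $\Omega$. This is a standard but technical adaptation of the Benedicks--Carleson exclusion argument with enlarged target $\Crit_c \cup \orb(z_\lambda)$, and is where most of the work of the proof will lie.
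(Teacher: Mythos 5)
Your overall strategy --- reduce to Theorem~\ref{thm:zero hole} by producing a positive measure set of Collet--Eckmann parameters, and for the periodic case run a Benedicks--Carleson-type parameter exclusion with the enlarged target $\Crit_c \cup \orb(z_\lambda)$ to secure \eqref{eq:slow approach} --- is essentially the route the paper takes, except that the paper outsources the transversality/exclusion step to an existing result (\cite[Theorem 7]{FreFreTod13}, recorded here as Lemma~\ref{lem:Omega per}) rather than re-deriving it, and invokes statistical stability \cite{FreTod10} to make the constants (in particular $s_t$ and the lower bounds on $m_t$ of small intervals) uniform over the parameter family. Part (a) is treated the same way in both: the generic case is ``almost classical.''

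There is, however, a genuine gap in your treatment of the other half of condition (P): the boundedness of the density $g^0_{\lambda,t}=d\mu_{\lambda,t}/dm_{\lambda,t}$ at $z_\lambda$. You justify it by asserting that ``$z_\lambda$ lies a definite distance from the postcritical set,'' but for the parameters in $\Omega$ the closure of the critical orbit is typically the whole dynamical core, so no such definite distance exists --- indeed, if it did, condition \eqref{eq:slow approach} would be vacuous and your entire exclusion argument unnecessary. The density has spikes precisely along the critical orbit (the paper notes $g^0_t \notin \cont^p(I)$ for this reason), and the Gibbs property of the induced measure controls cylinder measures on the tower but does not by itself prevent the projected density from blowing up at an accumulation point of $\{f^n(c)\}$. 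The paper closes this gap via Nowicki's argument \cite{Now93}: one bounds $(\L_{\phi_t}^n 1)(z_\lambda)$ uniformly in $n$, which reduces to the summability
\[
\sum_{n=0}^\infty \frac{1}{\bigl(|Df^n(f(c))|^{\ell_c}\,|f^n(f(c))-z_\lambda|^{1-\frac{1}{\ell_c}}\bigr)^t e^{np_t}} < \infty,
\]
and this is where \eqref{eq:slow approach} together with the exponential derivative growth along the critical orbit is actually used a second time. Without some version of this quantitative estimate, your claim that (P) holds on $\Omega(z_4)$ is unsupported, and Theorem~\ref{thm:zero hole} cannot be applied in the periodic case.
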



\section{An inducing scheme with uniform tail rates which covers most of our space: Theorem~\ref{thm:lift} and  Proposition~\ref{prop:uniform tails}}
\label{sec:tails}

In this section we describe our inducing schemes and then prove that the Hausdorff dimension of points not liftable to $\Delta$ is not large: This is Theorem~\ref{thm:lift}(a).  In order to prove the remaining parts of that theorem, it is useful to have some continuity properties of Hausdorff dimension of measures and of the sets $\hDelta^\infty$.  Since Proposition~\ref{prop:uniform tails} gives us some of these properties we prove the proposition before completing the proof of Theorem~\ref{thm:lift}.

As in Section~\ref{sec:results}, we fix a set $\Crit_{\mbox{\tiny hole}}$ and 
$\delta_0 >0$ sufficiently small
such that (H2) and \eqref{eq:no holes mixing} are satisfied for $\delta = \delta_0$.  This in turn 
fixes all $\delta$ appearing in (C1)-(C2) to have value $\delta_0$.  
Let $\H(h)$ denote the family of holes defined in Section~\ref{sec:results}.

Under these conditions, in \cite{BDM} an interval $X \subset B_{\delta_0}(c)$ for some
$c \in \Crit_c$  is fixed and
inducing schemes $(X,F, \tau, H)$ are constructed
over $X$
with uniform tails for all  $H \in \H(h)$:  there exists
$C_1, \alpha_1>0$ such that $m_1(\tau > n) \le C_1 e^{-\alpha_1 n}$ where 
$C_1$ and $\alpha_1$
depend only on $h$ once $\Crit_{\mbox{\tiny hole}}$ is fixed.
Necessarily, $h \le \delta_0$, so that $X$ is disjoint from $H$ for all $H \in \H(h)$.
We set $\delta_1 = m_1(X)$ and note that $\delta_1 < \delta_0$ by
the construction in \cite{BDM}.

\subsection{Proof of Theorem~\ref{thm:lift}(a)}
\label{hausdorff}

The proof of Theorem~\ref{thm:lift}(a) follows \cite[Lemma 4.5]{BDM} closely 
(which in turn follows \cite{DHL} rather closely).

The inducing time on the interval $X$ of length $\delta_1 < \delta_0$ is constructed 
following a standard
algorithm: $X$ is iterated forward until it is cut by either the boundary of the hole or the exponential
{\em critical partition} defined in a $\delta_0$-neighborhood of each critical point.  
The resulting subintervals 
are then iterated, waiting for a proper return to $X$ which occurs during a `free' period and 
consists of an interval which overlaps $X$ by at least $\delta_1/3$ on each side.
This defines the return time function $\tau$ on $X$.
Intervals passing through 
$B_{\delta_0}(c)$ for some $c \in \Crit_c$ 
undergo a `bound' period whose length depends on
the {\em depth} of the return, which is the index of the critical partition.  
The definitions of `free' and `bound' in this context are by now standard
(see \cite[Sect. 2.2]{DHL}).  The only new feature created after the introduction of the hole
in \cite[Lemma 4.5]{BDM} is a short bound period which allows the derivative to grow
sufficiently between cuts due to $\partial H$.  This avoids the problem of repeated cutting
potentially destroying expansion and is formulated formally in (H2)(c). 

Let $\Q^{(n)}$ denote the set of subintervals of $X$ induced by these
subdivisions after $n$ steps which have not made a proper return by time $n$.
By \cite[Lemma 4.5]{BDM}, we have
\begin{equation}
\label{eq:Q}
\sum_{\omega \in \Q^{(n)}} |\omega| \le C_1 e^{-\alpha_1 n} ,
\end{equation}
where $|\omega|$ denotes the length of the interval $\omega$.

The set of intervals in $\Q^{(n)}$ form an open cover  for $\nolift_\Delta$ for each $n$, 
thus we may
use them to bound the Hausdorff dimension of $\nolift_\Delta$.  First we 
define a coarser partition $\tilde\Q^{(n)}$ of the set $\{ \tau > n \}$ by grouping
intervals $\omega \in \Q^{(n)}$ as follows:
We glue together
adjacent intervals $\omega \in \Q^{(n)}$ which are in a  bound period at time $n$
and which have not been separated by an intersection with 
$\partial H \cup \Crit_c \cup (\cup_{c \in \Crit_c} \partial B_{\delta_0}(c))$ at any
time $0 \le k \le n$.

We claim that the cardinality of $\tilde \Q^{(n)}$ is finite and bounded exponentially
in $n$.  To prove this claim, note that $\tilde \Q^{(1)} = \{ X \}$ and
that elements of $\tilde\Q^{(n)}$ are formed from elements $\omega \in \tilde \Q^{(n-1)}$
in one of three ways:
\begin{itemize}
  \item $f(\omega) \cap \partial H \neq \emptyset$;
  \item $f(\omega) \cap (c \cup \partial B_{\delta_0}(c))$  for some $c \in \Crit_c$;
  \item $\omega$ is bound at time $n-1$, but part of $\omega$ becomes free
  at time $n$.
\end{itemize}
Since $\partial H$ and $\Crit_c$ are finite, the only point we need to consider is the third.

Suppose $x$ enters a bound period in $B_{\delta_0}(c)$ at time $k$ with depth $r$ in the critical 
partition, \ie $|f^k(x) - c| \approx e^{-r}$.  In order for $x$ to become free again at time
$n$, we must have $|(f^{n-k})'(x)| \ge \kappa^{-1} e^{\theta (n-k)}$ for some constant 
$\theta >0$ by
\cite[Lemma 2]{DHL}.  By (C1), we must also have $|(f^k)'(x)| \ge \kappa e^{\gamma k}$
upon entry to $B_{\delta_0}(c)$.
Thus $|(f^n)'(x)| \ge \zeta$ for some constant $\zeta >1$.
Now let $M = \max_{x\in I} |Df(x)|$.  
Then since
$|Df(f^kx)| \le C e^{-(\ell_c-1)r}$ by definition of the critical order $\ell_c$ of $c$, we must have
\[
\zeta \le |Df^n(x)| \le C e^{-(\ell_c -1)r} M^n \implies r \leq \frac{n \log M - \log (\zeta/C)}{\ell_{\max} -1} .
\]
So the number of intervals becoming free at time $n$ from a single intersection  with
the critical partition has a linear bound in $n$.  Putting these facts together with the
finiteness of $\partial H$ and $\Crit_c$ implies that the cardinality of
$\tilde \Q^{(n)} \le C K^n$ for some constants $C>0$, $K >1$ and all $n \in \N$ as required.

Now for $s<1$, we estimate using \eqref{eq:Q},
\[
\begin{split}
\sum_{\tilde \omega \in \tilde \Q^{(n)}} |\tilde \omega|^s
& \le \Big( \sum_{\tilde \omega \in \tilde \Q^{(n)}} |\tilde \omega| \Big)^s (\# \tilde Q^{(n)})^{1-s} \\
& = \Big( \sum_{\omega \in \Q^{(n)}} |\omega| \Big)^s (\# \tilde Q^{(n)})^{1-s}
\le C_1^s e^{-\alpha_1 n s} C^{1-s} K^{n(1-s)} .
\end{split}
\]
The above expression tends to 0 as $n \to \infty$ as long as
$- \alpha_1 s + (1-s) \log K <0$, \ie as long as $s > \log K/(\alpha_1 + \log K)$.
Thus $\dim_H(\nolift_\Delta) \le \log K/(\alpha_1 + \log K) =: D <1$, and Theorem~\ref{thm:lift}(a) is proved.

\subsection{Proof of Proposition~\ref{prop:uniform tails}}
\label{ssec:prop proof}

For our class of $\cont^2$ maps, the inducing schemes and towers constructed 
in \cite[Section 4.3]{BDM} satisfy the 
following properties.  For the distortion bounds, see also \cite[Proposition 3]{DHL}.

\medskip
\noindent
\parbox{.07 \textwidth}{\bf(A1)}
\parbox[t]{.9 \textwidth}{ There exist constants $\xi > 1$ and $C_d' > 0$ such that
\begin{itemize}
  \item[(a)] for any $x \in X$, $n \geq 1$ and $k < \tau^n(x)$,
  $|Df^{\tau^n(x)-k}(f^kx)| \ge (C_d')^{-1} \xi^{\tau^n(x)-k}$.
  \item[(b)] Let $x,y \in X_i$ and $\tau_i = \tau(X_i)$.  Then
  $\left| \frac{e^{S_\ell \varphi(x)}}{e^{S_\ell \varphi(y)}} \right| \leq C_d'$ for $\ell \leq \tau_i$.    
  In addition,
  $\left| \frac{e^{S_{\tau_i}\varphi(x)}}{e^{S_{\tau_i}\varphi(y)}} -1 \right|
        \leq C_d' d(f^{\tau_i}(x),f^{\tau_i}(y))$, or in alternative notation,
        $\left| \frac{e^{\Phi(x)}}{e^{\Phi(y)}} - 1 \right| \le C_d' d(F(x), F(y))$,
        where $\Phi$ is the induced version of $\vf$.
\end{itemize} }

\medskip
\noindent
\parbox{.1 \textwidth}{\bf(A2)}
\parbox[t]{.85 \textwidth}{There exists $L<\infty$ and an index set  
$\mathcal{I} \subset [0,L] \times \mathbb{N}$ such that\footnotemark
\begin{itemize}
  \item[(a)]  $m_1(I \backslash \cup_{(\ell, j) \in \mathcal{I}} \pi(\hDelta_{\ell,j})) = 0$;
  \item[(b)]  $\pi(\hDelta_{\ell_1, j_1}) \cap \pi(\hDelta_{\ell_2, j_2}) = \emptyset$ for all but finitely many
    $(\ell_1, j_1)$, $(\ell_2, j_2) \in \mathcal{I}$;
  \item[(c)]  Define $J_1\pi_{\ell, j} := J_1\pi|_{\hDelta_{\ell,j}}$.
  Then $\sup_{(\ell,j) \in \mathcal{I}} |J_1\pi_{\ell,j}|_\infty +  \mbox{Lip}(J_1\pi_{\ell,j}) < \infty$.
\end{itemize}  }
\footnotetext{See \cite[Lemma 4.6]{BDM} for a proof of property (A2).}

Here $J_1\pi$ is the Jacobian of $\pi$ with respect to Lebesgue measure $m_1$ and 
the corresponding induced measure $\bm_1$ on $\Delta$ and 
$\mbox{Lip}$ denotes the Lipschitz constant measured in the symbolic metric $d_\beta$
on $\Delta$ (both $\bm_1$ and $d_\beta$ are defined in Section~\ref{sec:LY}).
Property (A1) guarantees expansion and bounded distortion at return times.  
In particular, it guarantees that the partition $\{ \Delta_{\ell,j} \}$ is generating.
(A2) ensures that the covering of $I$ by $\hDelta$ is sufficiently nice
that all smooth functions can be realized as projections of Lipschitz densities from the tower
\cite[Proposition 4.2]{BDM}.

\begin{lemma}[Bounded Distortion for $\phi_t$]
\label{lem:distortion}
For the potential $\phi_t(x):=-t\log|Df(x)|-p_t$, we have the induced potential 
$\Phi_t(x)  -\tau(x)p_t= S_{\tau(x)}\vf_t(x)-\tau(x)p_t$ for the inducing scheme.  There exists $C_d >0$ such that for all $t \in [0,2]$ and any $x, y \in X_i$, for $0\le \ell\le \tau_i$, 
we have
$\left| \frac{e^{S_\ell \phi_t(x)}}{e^{S_\ell \phi_t(y)}} \right| \le C_d$.  Moreover, 
\[
\left| \frac{e^{ \Phi_t(x)-\tau(x)p_t}}{e^{\Phi_t(y)-\tau(y)p_t}} -1 \right| \le C_d d(F(x), F(y)).
\] 
Furthermore, $\Phi_t-\tau p_t$ is locally H\"older.  
\end{lemma}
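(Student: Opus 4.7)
The plan is to reduce each of the three assertions to the corresponding statement for $\vf = -\log|Df|$ already recorded in (A1)(b), exploiting the fact that $\phi_t = t\vf - p_t$ so that the additive constant $p_t$ drops out of every ratio in question.

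First, for the distortion bound along forward orbits, I unroll
\[
S_\ell \phi_t(x) = -t \log|Df^\ell(x)| - \ell p_t,
\]
so $e^{S_\ell \phi_t(x)}/e^{S_\ell \phi_t(y)} = (|Df^\ell(y)|/|Df^\ell(x)|)^t$, with no dependence on $p_t$ since the $\ell p_t$ terms cancel. By (A1)(b) applied to $\vf$, the base ratio $|Df^\ell(y)|/|Df^\ell(x)|$ lies in $[(C_d')^{-1}, C_d']$ for $x,y \in X_i$ and $\ell \le \tau_i$. Raising to the $t$-th power with $t \in [0,2]$ keeps the ratio in $[(C_d')^{-2}, (C_d')^2]$, uniformly in $t$, so I take $C_d \ge (C_d')^2$.

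For the induced estimate, the key observation is that on $X_i$ the inducing time $\tau$ is constant, so $\tau(x)p_t = \tau(y)p_t$ for $x,y \in X_i$; hence the correction $\tau p_t$ disappears from the ratio and
\[
\frac{e^{\Phi_t(x) - \tau(x)p_t}}{e^{\Phi_t(y) - \tau(y)p_t}} = \left(\frac{|Df^{\tau_i}(y)|}{|Df^{\tau_i}(x)|}\right)^t =: u^t.
\]
By (A1)(b), $|u - 1| \le C_d'\, d(F(x), F(y))$, and by the previous step $u$ stays in a fixed interval bounded away from $0$ and $\infty$. The elementary inequalities $|\log u| \le C_d' |u-1|$ (by integration on the bounded range of $u$) and $|e^z - 1| \le e^{|z|}|z|$, combined with $t \le 2$, give $|u^t - 1| = |e^{t\log u} - 1| \le C d(F(x), F(y))$ for a uniform $C$, so enlarging $C_d$ yields the stated bound.

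Finally, for local H\"older regularity of $\Phi_t - \tau p_t$: $\tau$ is constant on each $1$-cylinder $X_i$, so it contributes nothing to $V_n$ for $n \ge 1$, reducing the claim to local H\"older regularity of $\Phi_t = t\Phi$. For $x, y$ in the same $n$-cylinder $X_{n,i}$, the map $F^n$ is a diffeomorphism onto $X$, and (A1)(a) yields $|DF^n| \ge (C_d')^{-1}\xi^n$, whence $d(F(x), F(y)) \le C_d' \xi^{-(n-1)} |X|$. Plugging this into the induced-potential estimate just obtained gives $V_n(\Phi_t - \tau p_t) = O(\xi^{-n})$, so the local H\"older rate is $\eta = \log \xi$.

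The only real obstacle is bookkeeping constants to guarantee uniformity in $t\in[0,2]$; there is no substantial difficulty because $p_t$ cancels in every ratio (being either a common additive constant over identical numbers of iterates or identical on $1$-cylinders), and raising a uniformly bounded positive quantity to a power in a compact range only perturbs it by a uniformly controlled factor.
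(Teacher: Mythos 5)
Your proof is correct and follows essentially the same route as the paper's: both reduce each claim to (A1)(b) after cancelling the $p_t$ terms, use that $t$ ranges over a compact interval to control the $t$-th power of the bounded ratio, and derive the exponential decay of $V_n$ from the expansion in (A1)(a) forcing $d(F(x),F(y))$ to be exponentially small on $n$-cylinders. The only difference is cosmetic (you bound $|u^t-1|$ directly via elementary inequalities where the paper takes logarithms), so no further comment is needed.
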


\begin{proof}
Since $e^{S_\ell \phi_t(x)} = e^{t S_\ell \vf(x) - \ell p_t}$ and $e^{S_\ell \phi_t(y)} = e^{t S_\ell \vf(y) - \ell p_t}$, (A1)(b) implies immediately that $\left| \frac{e^{S_\ell \phi_t(x)}}{e^{S_\ell \phi_t(y)}} \right| \le (C_d')^t$.
For the second equation, it is equivalent to estimate
\[
\log \left| \frac{e^{S_{\tau_i} \phi_t(x)}}{e^{S_{\tau_i} \phi_t(y)}} \right|
\le t \log \left| \frac{e^{S_{\tau_i} \vf(x)}}{e^{S_{\tau_i} \vf(y)}} \right|,
\]
so this part of the lemma follows from (A1)(b) as long as $t$ is in a compact interval.  This implies that $V_1(\Phi_t)$ is bounded.  The fact that $V_n(\Phi_t)$, and hence $V_n(\Phi_t-\tau p_t)$, decays exponentially in $n$ follows similarly, but now also using (A1)(a) to show that if $x, y$ are in the same $n$-cylinder, then $d(F(x), F(y))$ is exponentially small in $n$.
\end{proof}

\begin{remark}
We have already stated that for the potential $\varphi_t(x):=-t\log|Df(x)|$, we have the induced potential 
$\Phi_t(x) := S_{\tau(x)}\vf_t(x)$ for the inducing scheme.  Now we also denote the 
corresponding punctured potentials by $\varphi_t^H$ and $\Phi_t^H$. 
We often denote $\Phi_1$, the induced potential corresponding to $\varphi$,  by $\Phi$.  
Observe that since our inducing schemes respect the hole, any measure on the survivor set cannot give mass to any column $\cup_{\ell=0}^{\tau_i-1}\Delta_{i, \ell}$ with a hole in it.  Therefore, for the purposes of computing pressure, we can either think of $\Phi_t^H$ on the corresponding base element $X_i$ as being $-\infty$, or think of $X_i$ as not being part of the inducing scheme.  In either case, the H\"older property of the variations persists for the punctured potentials.
\label{rmk:holder punc}
\end{remark}

The tail estimates in \cite{BDM} hold for the inducing schemes with respect to Lebesgue
measure.
We will show that we have related tail rates for the conformal measures corresponding to $\Phi_t$ for all $t$ sufficiently close to 1 and for all small holes.

Define 
\beq
\begin{split}
\chi_M & :=\sup\{\log|Df^n(x)|/n: f^n(x)=x\} \text{ and }\\
\chi_m & :=\inf\{\log|Df^n(x)|/n: f^n(x)=x\}.
\end{split}
\label{eq:chi}
\eeq
By \cite{BrvS}, (C2) implies that $\chi_m>0$.  Moreover,
for a measure $\mu\in \M_f$, let $$\chi(\mu):=\int\log|Df|~d\mu.$$  

Each interval $X_i$ contains a point $x_i$ such that $f^{\tau_i}(x_i) = x_i$.  
So by the
bounded distortion of Lemma~\ref{lem:distortion} and the definitions of $\chi_m$ and $\chi_M$,
we have $- \chi_M \tau_i \lesssim S_{\tau_i} \vf(y_i)= \Phi(y_i) \lesssim - \chi_m \tau_i$, 
for any $y_i \in X_i$, where $\lesssim$ denotes a uniform constant depending only on the distortion.  
Now $S_\tau \phi_t = t S_\tau \vf - \tau p_t$.  
Therefore, choosing
$y_i' \in X_i$ such that $e^{S_{\tau_i} \vf(y_i')} = m_1(X_i)/m_1(X)$, we
obtain for $0 < t < 1$,
\[
\sum_{\tau_i = n} e^{S_\tau \phi_t(y_i)}
\le C_d e^{n[(1-t)\chi_M - p_t]} \sum_{\tau_i = n} e^{S_{\tau_i} \vf(y_i')} 
\le C_1 C_d \delta_1^{-1} e^{n[(1-t)\chi_M - p_t]} e^{-\alpha_1 n}.
\]
Now since $p_1=0$ and $p_t$ is a continuous function of $t$ (see for example \cite{ITeq}), we may choose 
$t_0'<1$ such that $(1-t)\chi_M - p_t - \alpha_1 < 0$ for all $t \in [t_0', 1]$.   

The proof for $t>1$ follows similarly with $\chi_m$ in place of $\chi_M$.  In this case,
the constraint is $-(t-1)\chi_m - p_t - \alpha_1 < 0$ for all $t \in [1, t_1']$, with the 
restriction on $t_1'>1$ coming from the fact that $p_t<0$.
Then define 
$$
\alpha := \alpha_1 - \max \left\{ \sup_{t \in [t_0',1]} (1-t)\chi_M - p_t, \sup_{t \in [1, t_1']} (1-t) \chi_m - p_t \right\} .
$$

By conformality and bounded distortion, we have 
$m_t(X_i) = m_t(X) e^{S_\tau \phi_t(y_i)}$ for some $y_i \in X_i$ and each $i$,
which completes the proof of
Proposition~\ref{prop:uniform tails}.

Note that in order for Proposition~\ref{prop:uniform tails} and 
Theorem~\ref{thm:lift} to be coherent,  we 
will choose the interval $[t_0,t_1]$ in the statement of the Proposition~\ref{prop:uniform tails} to be 
contained in $[t_0', t_1']$ chosen above in order that the Hausdorff dimension considerations 
discussed in Section~\ref{hausdorff} hold.


\subsection{Proof of parts (b)--(d) of Theorem~\ref{thm:lift}}

For this proof we will repeatedly make use of the fact, proved in \cite{Hofdim}, that if $\mu\in \M_f$ then $\dim_H(\mu)=h(\mu)/\chi(\mu)$.

For part (b), we write $\I^\infty \subset \pi(\hDelta^\infty) \cup \nolift_\Delta$ and by
Theorem~\ref{thm:lift}(a), we have $\dim_H(\nolift_\Delta) \le D< 1$.
Note that the invariant measure $\nu_1^H$ corresponding to the potential $\vf_1$ from 
\cite{BDM} satisfies $\log \lambda_1^H = h(\nu_1^H) - \chi(\nu_1^H)$, so that
$\dim_H(\nu_1^H) = \frac{h(\nu_1^H)}{\chi(\nu_1^H)} = 1 + \log \lambda_1^H$.  
Moreover, in \cite{BDM} it is shown that $ \log \lambda_1^H\to 0$ as the hole shrinks to zero, 
so $\dim_H(\nu_1^H)$ can be
made arbitrarily close to 1 (and $>D$) for any $H \in \H(h)$ with $h$ sufficiently small.
This implies that $\dim_H(\pi(\hDelta^\infty)) > D$ so that
necessarily, $\dim_H(\I^\infty) = \dim_H(\hDelta^\infty)$.

As before, let $\mu_t$ denote the equilibrium state for $\vf_t$ (before the introduction of the hole).
For (c), we have 
$\dim_H(m_t)=\dim_H(\mu_t)=\frac{h(\mu_t)}{\chi(\mu_t)}=t+\frac{P_{\M_f}(\vf_t)}{\chi(\mu_t)}$.   
As in \cite{ITeq}, this value is continuous in $t$, so $\dim_H(m_t)$ is close to 1 for $t$ 
close enough to 1 since $P_{\M_f}(\vf_1)=0$.  So 
part (a) implies that $m_t(\nolift_\Delta) = 0$.  
We set $t_1' \ge t_1>1$ and $t_0'\le t_0<1$ to be such that this holds for all $t\in [t_0, t_1]$.

For (d), we will use the fact that $\chi_m>0$.  
Part (a) implies that a measure $\mu\in \M_f^H$ which doesn't lift to  $(\Delta, f_\Delta)$ must have 
$\dim_H(\mu)=h(\mu)/\chi(\mu)<D$.  Thus 
$$
h(\mu) + t\int\vf^H~d\mu=
h(\mu)-t\chi(\mu)<-(t-D)\chi(\mu)\le -(t-D)\chi_m.
$$
Since for $t$ sufficiently close to 1 and $H$ sufficiently small, 
$P_\Delta(\varphi_t^H)$ is approximately 0, the result follows. 
Note that this holds since $P_\Delta(\varphi_1^H)$ is continuous in the size of the 
hole by \cite{BDM} and $P_\Delta(\varphi_t^H)$ is continuous in $t$.


\section{Review of Known Results: Transfer Operator on the Tower with Holes}
\label{sec:LY}

We recall the abstract setup of \cite{BDM} into which we shall place our induced maps
in order to prove Theorems \ref{thm:accim} -- \ref{thm:zero hole}.
 
Let $f_\Delta : \Delta \circlearrowleft$ be a Young tower 
formed over an inducing scheme $(f,X, \tau)$ as described in Section~\ref{ssec:induced}.
Given a $\phi$-conformal reference measure $m$ on $I$,
we define a reference measure $\bm$ on $\Delta$ by $\bm = m$ on $\Delta_0=X$ and
$\bm|_{\Delta_\ell} = (f_\Delta)_* \bm|_{\Delta_{\ell-1} \cap f_\Delta^{-1} \Delta_\ell}$ 
for $\ell \ge 1$.  
For $x \in \Delta_\ell$, let $x^- := f^{-\ell}x$ 
denote the pullback of $x$ to $\Delta_0$.
We define the induced
potential on $\Delta$ by,
\begin{equation}
\label{eq:Delta potential}
\phi_\Delta(x) = S_\tau \phi (x^-) \; \; \mbox{for } x \in f_\Delta^{-1}(\Delta_0)
\; \; \mbox{and} \; \; \phi_\Delta = 0 \; \; \mbox{on } \Delta \setminus f_\Delta^{-1}(\Delta_0) .
\end{equation}
With these definitions, the measure $\bm$ is $\phi_\Delta$-conformal.
As in Section~\ref{ssec:induced}, we assume 
that the partition $\{ \dlj \}$ (equivalently $\{ X_i \}$) 
is generating and that all returns to $\Delta_0$ satisfy $f_\Delta^\tau(\dlj) = \Delta_0$.\footnote{
The abstract setup in \cite{BDM} uses the more general {\em finite images} condition, but
since the towers constructed in \cite{BDM} actually satisfy full returns to a single base,
we will use this simpler version here.}  We assume that the tower has
exponential returns:
\begin{itemize}
  \item[(P1)]  There exist constants $C, \alpha >0$
such that  $\bm(\Delta_n) \le Ce^{-\alpha n}$, for $n \in \N$ 
(this is equivalent to saying that $m(\tau=n)=O(e^{-\alpha n})$).  
\end{itemize}

The tower inherits a natural metric adapted to the dynamics as follows.  
Define the \emph{separation time} on $\Delta$ to be
\[
\begin{split}
s(x,y) = \min \{ n \geq 0 : \; & f_\Delta^nx, f_\Delta^ny \mbox{ lie in different partition 
elements $\dlj$} \}. \\
\end{split}
\]
$s(x,y)$ is finite $\mu$-almost everywhere for any $\mu$ that lifts to the tower
since $\{ X_i \}$ is a generating partition for $f^\tau$.
Choose $\beta \in (0, \alpha)$ and define a metric $d_\beta$ on $\Delta$ by
$d_\beta(x,y) = e^{-\beta s(x,y)}$.  

We introduce a hole $H$ in $\Delta$ which is the union of countably many
partition elements $\dlj$, \ie $H = \cup_{\ell,k} H_{\ell,k}$ where $H_{\ell,k} = \dlj$
for some $j$.  Set $H_\ell = \cup_j H_{\ell,j} \subset \Delta_\ell$.
For simplicity we assume that the base $\Delta_0$ contains no holes (this can always
be arranged in the construction of the tower by choosing a suitable reference set $X$).
We assume the following additional properties for the tower.

\begin{itemize}
  \item[(P2)]  (Bounded Distortion) We suppose that $\phi_\Delta$ is Lipshitz in the metric $d_\beta$.
    Furthermore, we assume there exists $C_d>0$ such that for all $x, y \in \Delta$ and $n \geq 0$,
\begin{equation}
\label{eq:bounded dist delta}
\Big| e^{S_n\phi_\Delta(x) - S_n\phi_\Delta(y)} -1 \Big| \le C_d d_\beta(f_\Delta^nx, f_\Delta^ny) .
\end{equation}
  \item[(P3)] (Subexponential growth of potential)
 For each $\ve >0$, there exists $C>0$, such that 
 \begin{equation}
\label{eq:finite pressure}
 |S_\tau\phi_\Delta | \le  C e^{\ve \tau} \; \;\; \mbox{for all return times $\tau$.}
\end{equation} 

 \item[(P4)]  (Smallness of the hole) 
Let $H_\ell = \cup_j H_{\ell, j}$ and set
$q := \sum_{\ell \ge 1} \bm(H_\ell) e^{\beta (\ell-1)}$.  We assume
\begin{equation}
\label{eq:hole size}
q < \frac{(1-e^{-\beta})\bm(\Delta_0)}{1+C_d} .
\end{equation}
\end{itemize}
We say the open system is mixing if g.c.d.$\{\tau|_{\Delta_0 \cap \f_\Delta^{-\tau} (\Delta_0)} \}=1$
and $\f_\Delta$ still admits at least one return to $\Delta_0$. 

Following Section~\ref{ssec:transfer}, we define the transfer operator 
$\Lp_{\phi^H_\Delta}$ associated with the punctured potential $\phi_\Delta^H$ 
and acting on $L^1(\bm)$ by
\[
\Lp^n_{\phi^H_\Delta} \psi(x)  = \sum_{f_\Delta^n y = x} \psi(x) e^{S_n\phi_\Delta(y)} 1_{\hDelta^n}(y) = \Lp^n_{\phi_\Delta}(\psi 1_{\hDelta^n})(x)
\]
where $\hDelta^n = \cap_{i=0}^n f_\Delta^{-i} \hDelta$ as before.
For notational simplicity, we will denote $\Lp_{\phi^H_\Delta}$ simply by
$\hLp$ for the remainder of this section since the potential $\phi_\Delta$ is fixed.  When we
wish to vary the potential in later sections, we will reintroduce subscripts to 
reinforce the explicit dependence on the potential and the hole.


\subsection{A Spectral Gap for the Transfer Operator}

We define the following function spaces on $\Delta$ used in \cite{young tower, BDM}
on which the transfer operator $\hLp$ for the tower with a hole has a spectral gap.   

For $\psi \in L^1(\bm)$,
define
\begin{equation}
\label{eq:norms}
\begin{split}
\| \psi_{\ell,j} \|_{\lip} &= e^{-\beta\ell} \text{Lip}(\psi|_{\Delta_{\ell,j}}) , \\
\| \psi_{\ell,j} \|_\infty &= e^{-\beta\ell} \sup_{\Delta_{\ell,j}} |\psi| \, .
\end{split}
\end{equation}
Then define $\| \psi \|_{\lip} = \sup_{\ell,j} \| \psi_{\ell,j} \|_{\lip}$,
$\| \psi \|_{\infty} = \sup_{\ell,j} \| \psi_{\ell,j} \|_{\infty}$,
and $\| \psi \|_\B = \| \psi \|_{\lip} + \| \psi \|_\infty$.

Note that if $\| \psi \|_\infty < \infty$, then
\[
\int_\Delta \psi \, d\bm \leq \| \psi \|_\infty \sum_\ell e^{\beta \ell} \bm(\Delta_\ell)
\le \| \psi \|_\infty \sum_\ell e^{\beta \ell} m(\tau \geq \ell) < \infty
\]
by choice of $\beta$ so that $\psi \in L^1(\bm)$.
Let $\B=\B_\beta := \{ \psi \in L^1(\bm) : \| \psi \|_\B < \infty \}$. 

The following theorem is proved in \cite{BDM}.

\begin{theorem}(\cite{BDM})
\label{thm:spectrum}
Let $(f_\Delta, \Delta, H)$ satisfy (P1)-(P4) and assume the open system is mixing.  
Then $\hLp$ has essential spectral radius bounded by $e^{-\beta}$ and
spectral radius given by $e^{-\beta} < \lambda<1$.  The eigenvalue $\lambda$ is simple
and all other eigenvalues have modulus strictly less than 1.

Let $g \in \B$ denote the unique probability density corresponding to
$\lambda$.  Then there exists $\sigma <1$ and $C>0$
such that for all $\psi \in \B$,
\begin{enumerate}
  \item[(i)] $\| \lambda^{-n} \hLp^n \psi - c(\psi) g \|_\B \le C \| \psi \|_\B \sigma^n $, 
  where $c(\psi)$ is a 
  constant depending on $\psi$;
  \item[(ii)]  moreover, $c(\psi) > 0$ if and only if
  $\lim_{n \to \infty} \| \hLp^n \psi /|\hLp^n \psi|_1 - g \|_\B = 0$, where convergence is at the rate
  $\sigma^n$.
\end{enumerate}
\end{theorem}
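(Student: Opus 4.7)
The plan is to prove this via a Lasota--Yorke type inequality on the Banach space $\B$, followed by a Hennion spectral decomposition and a Perron--Frobenius argument on a cone of nonnegative functions to identify and isolate the top eigenvalue.

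First I would establish the Lasota--Yorke inequality: there exist $C>0$ and $\sigma_0 < 1$ such that for all $n \ge 0$ and $\psi \in \B$,
\[
\| \hLp^n \psi \|_\B \le C \sigma_0^n \| \psi \|_\B + C_n |\psi|_{L^1(\bm)},
\]
where $C_n$ may depend on $n$. The Lipschitz piece is controlled by iterating the bounded distortion estimate (P2): for $x, y$ in the same partition element $\hDelta_{\ell,j}$, their pullbacks under $\hLp^n$ lie in the same $(n+\ell)$-cylinder, so separation times are additive along returns, producing the factor $e^{-\beta n}$. The $L^\infty$ piece is the delicate part: splitting $\hLp^n$ according to the last return time to $\Delta_0$, the contribution from orbits that avoid the hole and return is controlled by $\bm(H_\ell)$ weighted by $e^{\beta\ell}$, so the quantity $q$ from (P4) enters directly, and the smallness condition \eqref{eq:hole size} is precisely what is needed to obtain a contraction factor strictly less than $1-e^{-\beta}$ in the sup-norm piece on levels above the base. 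Condition (P3) ensures the potential contributes only subexponentially through return times, so it is absorbed into constants once $\beta$ is chosen smaller than $\alpha$.

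Next I would apply the Hennion--Ionescu-Tulcea-Marinescu theorem. The unit ball of $\B$ is relatively compact in $L^1(\bm)$ (by Arzel\`a--Ascoli on each partition element, combined with the exponentially decaying $L^1$-tail coming from the weights $e^{-\beta\ell}$ and (P1)), so the Lasota--Yorke inequality implies that $\hLp : \B \to \B$ is quasi-compact with essential spectral radius at most $\sigma_0 \le e^{-\beta}$. In particular, the spectrum outside the disk of radius $e^{-\beta}$ consists of finitely many eigenvalues of finite multiplicity.

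To identify $\lambda$ and prove it is simple and the unique peripheral eigenvalue, I would invoke a positive-operator argument. Since $\hLp$ preserves the cone of nonnegative functions in $\B$, a Perron--Frobenius type result (e.g.\ via a Birkhoff projective contraction of a suitable cone of nonnegative, log-Lipschitz densities, using the full-returns condition $f_\Delta^\tau(\dlj) = \Delta_0$, the mixing assumption, and (P2)) gives a positive eigenvalue $\lambda$ with a strictly positive eigenfunction $g \in \B$, and shows that $\hLp^n / \lambda^n$ contracts the cone in the Hilbert projective metric. That (P4) forces $\lambda > e^{-\beta}$ then follows because $q$ bounds the $L^1$-loss due to the hole, so $\lambda \ge 1 - q/\bm(\Delta_0) > e^{-\beta}$ up to the distortion factor. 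Cone contraction yields simplicity of $\lambda$ and that every other spectral value has modulus strictly below $\lambda$; combining with quasi-compactness gives a spectral gap and therefore (i) with geometric rate $\sigma < 1$.

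The main obstacle is coupling the Lasota--Yorke inequality to the smallness of the hole: one must extract a contraction constant that is strictly smaller than the eigenvalue $\lambda$, and (P4) is chosen to make this possible, but verifying it requires careful bookkeeping of how much mass leaks through each $H_{\ell,j}$ weighted by $e^{\beta\ell}$. Part (ii) is a direct consequence of (i): dividing the expansion $\hLp^n\psi = \lambda^n c(\psi) g + O(\lambda^n \sigma^n)$ by $|\hLp^n\psi|_{L^1(\bm)}$ gives convergence in $\B$ at rate $\sigma^n$ precisely when $c(\psi) \neq 0$, and positivity of $g$ forces $c(\psi) > 0$ for the normalized limit to exist.
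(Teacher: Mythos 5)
Your proposal is correct and follows essentially the same route as the proof this theorem is quoted from: \cite{BDM} establishes exactly such a Lasota--Yorke inequality on $\B$ using (P2)--(P3), uses (P4) to guarantee the leading eigenvalue exceeds the essential spectral radius $e^{-\beta}$, and isolates the simple eigenvalue $\lambda$ via a projective cone contraction on the tower (the same mechanism invoked later in this paper through \cite{maume}). The present paper does not reprove the result, so there is nothing further to compare.
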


The primary object of interest in open systems is the limit in (ii) above since it
describes the class of densities whose escape rate matches that of $g$.  Thus it
raises the question, for which functions is $c(\psi)>0$?  We characterize this set more
precisely in the next section by constructing an invariant measure on the survivor set,
which will also serve as an equilibrium state for the open system.


\subsection{Construction of equilibrium state}
\label{ssec:equilibrium}

We begin with what is by now a standard construction of an invariant measure
on the survivor set $\hDelta^\infty := \cap_{n=0}^\infty \f_\Delta^{-n} \hDelta$.
Let $\B_0$ denote the space of functions $\B_\beta$, but with $\beta =0$.

Note that by the conditional invariance equation $\hLp g = \lambda g$, and the fact
that $g \geq \ve >0$ on $\Delta_0$ (\cite[Proposition 2.4]{BDM}), 
we have $C \lambda^{-\ell} \le g|_{\hDelta_\ell} \le C^{-1} \lambda^{-\ell}$,
for some constant $C>0$ depending only on (P1)-(P4).
Since $\lambda > e^{-\beta}$ by construction, it follows from the definitions of
$\B$ and $\B_0$ that $g \psi \in \B$ whenever $\psi \in \B_0$.

Take $\psi \in \B_0$.  Since $g \psi \in \B$, by Theorem~\ref{thm:spectrum}(i), we may 
define,
\[
\mathcal{Q}(\psi) := \lim_{n \to \infty} \lambda^{-n} g^{-1} \hLp^n(g \psi) = c(g \psi) .
\]
This defines a linear functional on $\B_0$.  We also have
$|\hLp^n(g \psi)| \leq |\psi|_\infty \hLp^n g = |\psi|_\infty \lambda^n g$, so that
$|\mathcal{Q}(\psi)| \leq |\psi|_\infty$ and $\mathcal{Q}$ is a bounded, linear functional
on $\B_0$.
By the Riesz representation theorem, there exists a measure $\tnu$ such that
$\mathcal{Q}(\psi) = \tnu(\psi)$ for all $\psi \in \B_0$.  Since $\mathcal{Q}(1)=1$, it follows
that $\tnu$ is a probability measure.  Indeed, it is easy to check that
$\tnu$ is an invariant probability measure for $f_\Delta$ supported on $\hDelta^\infty$.

It follows from \cite[Sect.~3.3]{BDM} that $\tnu$ is ergodic and enjoys exponential decay of
correlations on functions in $\B_0$.

\cite[Section 3.4]{BDM} uses $\tnu$ to formulate the following proposition regarding convergence and
escape rates.

\begin{proposition} (\cite{BDM})
\label{prop:rates}
Let $(f_\Delta, \Delta, \bm, H)$ be as in the statement of Theorem~\ref{thm:spectrum}.
\begin{enumerate}
  \item[(i)]  For each $\psi \in \B_0$ with $\psi \ge 0$, we have $\tnu(\psi) > 0$ if and only if
   \[
   \lim_{n \to \infty} \frac{\hLp^n \psi}{|\hLp^n \psi|_1} = g, 
   \]
where, as usual, the convergence
is at an exponential rate in the $\| \cdot \|$-norm.  
In particular, the reference measure converges to the
conditionally invariant measure $g d\bm$.
  \item[(ii)]  Let $\psi \in \B$, $\psi \geq 0$, with $\tnu (x : \psi(x) > 0) > 0$.  Then the
  limit in (i) holds.  Moreover,  
  the escape rate with respect to the measure $\psi \bm$
  exists and equals $-\log \lambda$.
\end{enumerate}
\end{proposition}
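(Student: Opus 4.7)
Both parts of Proposition~\ref{prop:rates} follow by applying Theorem~\ref{thm:spectrum} to $\psi$ directly and identifying the coefficient $c(\psi)$ in terms of $\tnu$. First I would observe that $\B_0 \subseteq \B$, since the weights $e^{-\beta\ell}\le 1$ make the $\B_\beta$-norms dominated by their $\B_0$-counterparts. Hence in both (i) and (ii), $\psi$ is a legitimate input to Theorem~\ref{thm:spectrum}(ii), which already provides
\[
c(\psi) > 0 \iff \lim_{n\to\infty}\left\|\frac{\hLp^n\psi}{|\hLp^n\psi|_1} - g\right\|_\B = 0,
\]
with convergence at rate $\sigma^n$. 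All that remains is to translate $c(\psi) > 0$ into the stated condition on $\tnu$.

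For the translation, I would exploit the fact that $c$ is a bounded positive linear functional on $\B$ (positivity is immediate because $\psi \ge 0 \Rightarrow \hLp^n\psi \ge 0$, so the limit $c(\psi)g$ is $\ge 0$, forcing $c(\psi)\ge 0$), and a standard Riesz-type extension then produces a positive Borel measure $\bmu$ on $\Delta$ with $c(\psi) = \int \psi\,d\bmu$. By the construction of $\tnu$ in Section~\ref{ssec:equilibrium}, $\tnu(\psi) = c(g\psi)$ for $\psi \in \B_0$, so $d\tnu = g\,d\bmu$, with the normalization $c(g) = 1$ falling out of $\hLp g = \lambda g$ (since $\lambda^{-n}\hLp^n g = g$ for all $n$). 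Next, the recursion $g(x,\ell+1) = \lambda^{-1}g(x^-,\ell)$ forced by $\hLp g = \lambda g$ (as the only preimage of $(x,\ell+1)$ under $\f_\Delta$ lies in $\hDelta_\ell$ and carries zero potential there), combined with the known lower bound $g \ge \ve > 0$ on $\Delta_0$, yields $g \ge \ve\lambda^{-\ell}\ge \ve$ on every $\hDelta_\ell$, so $g$ is strictly positive throughout $\hDelta$. Consequently $\tnu$ and $\bmu$ are mutually absolutely continuous, and for $\psi \ge 0$ both $\tnu(\psi) > 0$ (part (i)) and $\tnu\{\psi > 0\} > 0$ (part (ii)) are equivalent to $c(\psi) > 0$.

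The escape rate in part (ii) is then immediate from conformality of $\bm$: for $\psi\ge 0$ we have $(\psi\bm)(\hDelta^n) = \int_{\hDelta^n}\psi\,d\bm = |\hLp^n\psi|_1 \sim \lambda^n c(\psi)$, so $\e(\psi\bm) = -\lim_n \tfrac{1}{n}\log(\psi\bm)(\hDelta^n) = -\log\lambda$. The main technical obstacle I anticipate is the clean construction of $\bmu$ from $c$ as a regular Borel measure on $\Delta$ and the passage from ``support in $\tnu$'' to ``positive $c$-mass'': this is precisely where the lower bound $g \ge \ve$ on all of $\hDelta$ plays the decisive role, especially in part (ii), where $\psi$ is only assumed to lie in $\B$ and not in $\B_0$, so the identity $\tnu(\psi) = c(g\psi)$ cannot be invoked verbatim and one must work at the level of support sets.
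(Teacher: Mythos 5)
Your argument is correct and is essentially the intended one: the paper states Proposition~\ref{prop:rates} as a quotation from \cite{BDM} without proof, but its own construction of $\tnu$ via $\mathcal{Q}(\psi)=c(g\psi)$ in Section~\ref{ssec:equilibrium}, together with the two-sided bounds $C\lambda^{-\ell}\le g|_{\hDelta_\ell}\le C^{-1}\lambda^{-\ell}$, is precisely the mechanism you exploit to translate $c(\psi)>0$ into $\tnu(\psi)>0$ (respectively $\tnu\{\psi>0\}>0$) and then invoke Theorem~\ref{thm:spectrum}(ii). The one step you rightly flag as delicate---representing $c$ by a measure on all of $\B$ rather than just $\B_0$---does go through, since $d\bmu=g^{-1}d\tnu$ satisfies $\bmu(\Delta_\ell)\lesssim\lambda^{\ell}\tnu(\Delta_\ell)\lesssim e^{-\alpha\ell}$ by Lemma~\ref{lem:nu bounds} and (P1), so that $\sum_\ell e^{\beta\ell}\bmu(\Delta_\ell)<\infty$ and every $\psi\in\B$ is $\bmu$-integrable.
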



\subsection{Variational Principle on $\hDelta$}
\label{sec:var delta}

Let $\tnu_0:= \frac{1}{\tnu(\Delta_0)} \tnu|_{\Delta_0}$ and define
$\hF=\f_\Delta^\tau: \hDelta^\infty\cap\Delta_0 \circlearrowleft$.
Recall that $\tau^n(x) = \sum_{k=0}^{n-1} \tau(\hF^kx)$ denotes the $n^{\mbox{\scriptsize th}}$
return time
starting at $x$ and let $\M_{\hF}$ be the set of $\hF$-invariant Borel probability
measures on $\hDelta^\infty\cap\Delta_0$.
We will need the following two lemmas, the first of which is proved in \cite[Lemma 5.3]{BDM}.

\begin{lemma} 
\label{lem:nu bounds}
Let $X_{n,i} \subset \Delta_0$
denote a cylinder set of length $n$ with respect to $\hF$.  Then there exists
a constant $C>0$ such that for all $n$,
\[
C^{-1} \lambda^{-\tau^n(y_*)} e^{S_{\tau^n}\phi_\Delta(y_*)} \leq \tnu(X_{n,i})
\leq C \lambda^{-\tau^n(y_*)} e^{S_{\tau^n}\phi_\Delta(y_*)}
\]
where $y_*$ is an arbitrary point in  $X_{n,i}$.
\end{lemma}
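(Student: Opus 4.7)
The plan is to compute $\hLp^{\tau^n}$ applied to $g\cdot 1_{X_{n,i}}$ directly, then iterate further and use the spectral gap to extract $\tnu(X_{n,i})$. The key geometric input is that the inducing scheme has full returns, so $\hF^n = f_\Delta^{\tau^n}$ maps $X_{n,i}$ diffeomorphically onto $\Delta_0$, with a single inverse branch, and $X_{n,i} \subset \hDelta^{\tau^n}$ by construction. Using the explicit formula for the iterated transfer operator one therefore obtains, for all $x \in \Delta$,
\[
\hLp^{\tau^n}(g\cdot 1_{X_{n,i}})(x) = 1_{\Delta_0}(x) \cdot g(y_x) \cdot e^{S_{\tau^n}\phi_\Delta(y_x)},
\]
where $y_x \in X_{n,i}$ is the unique $f_\Delta^{\tau^n}$-preimage of $x$. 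Combining the bounded distortion bound (P2) for $\phi_\Delta$ with the two-sided bound $\epsilon \le g|_{\Delta_0}\le \epsilon^{-1}$ (from Proposition 2.4 of \cite{BDM}), this expression factors as $e^{S_{\tau^n}\phi_\Delta(y_*)}\,h_{n,i}$, where $y_* \in X_{n,i}$ is arbitrary and $h_{n,i}\in\B$ satisfies $C^{-1}\,g\cdot 1_{\Delta_0} \le h_{n,i} \le C\,g\cdot 1_{\Delta_0}$ with $C$ depending only on $C_d$ and the bounds on $g$, independent of $n$ and $i$.

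The second step is to iterate and pass to the limit. For $k > \tau^n$,
\[
\lambda^{-k}\hLp^k(g\cdot 1_{X_{n,i}}) = \lambda^{-\tau^n}e^{S_{\tau^n}\phi_\Delta(y_*)}\cdot\lambda^{-(k-\tau^n)}\hLp^{k-\tau^n}(h_{n,i}).
\]
By Theorem~\ref{thm:spectrum}(i), both sides converge in $\B$ as $k\to\infty$. The right-hand side tends to $\lambda^{-\tau^n}e^{S_{\tau^n}\phi_\Delta(y_*)}\,c(h_{n,i})\,g$, while, via the identification $\tnu(\psi) = c(g\psi)$ used in the construction of $\tnu$, the left-hand side tends to $\tnu(X_{n,i})\,g$. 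This gives the exact identity
\[
\tnu(X_{n,i}) = \lambda^{-\tau^n}e^{S_{\tau^n}\phi_\Delta(y_*)}\cdot c(h_{n,i}).
\]
Since $h_{n,i}/g$ is pointwise comparable to $1_{\Delta_0}$ with uniform constants, and since $c(g\cdot 1_{\Delta_0}) = \tnu(\Delta_0) > 0$, positivity of the functional $c$ yields $C^{-1}\tnu(\Delta_0) \le c(h_{n,i}) \le C\tnu(\Delta_0)$, which supplies the desired Gibbs-type bounds.

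The main obstacle is a technical one rather than a structural one: the indicator $1_{X_{n,i}}$ is not Lipschitz on $\Delta_0$ with respect to $d_\beta$ (its discontinuity along $\partial X_{n,i}$ cuts across partition elements $\dlj$ of level $0$), so strictly speaking $g\cdot 1_{X_{n,i}}\notin\B$ and the identity $\tnu(\psi)=c(g\psi)$ is not immediately applicable. This is handled by approximating $1_{X_{n,i}}$ from above and below by $\B_0$-functions supported appropriately and using Borel regularity of $\tnu$ together with positivity to squeeze; alternatively, one applies the transfer operator first to reach a $\B$-function and only then invokes the spectral decomposition, noting that $\hLp^{\tau^n}(g\cdot 1_{X_{n,i}})$ belongs to $\B$ with norm controlled by $e^{S_{\tau^n}\phi_\Delta(y_*)}$ times a constant, since $h_{n,i}$ is Lipschitz on $\Delta_0$ with constant controlled by $C_d$. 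Either route reduces the lemma to the clean transfer-operator computation described above.
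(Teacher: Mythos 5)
The paper itself does not prove this lemma; it cites \cite[Lemma 5.3]{BDM} directly. Your argument reconstructs the canonical proof: apply $\hLp^{\tau^n}$ to $g\,1_{X_{n,i}}$, use the full-return property together with (P2) and the two-sided bound on $g|_{\Delta_0}$ to extract the factor $e^{S_{\tau^n}\phi_\Delta(y_*)}$, and then use the spectral gap (Theorem~\ref{thm:spectrum}(i)) and the identification $\tnu(\psi)=c(g\psi)$ to recover $\tnu(X_{n,i})$ up to uniform constants. This is correct and is, in spirit, exactly the mechanism used in \cite{BDM}.

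One small point: your stated ``main obstacle'' is not actually an obstacle. The function $1_{X_{n,i}}$ \emph{is} in $\B_0$. Indeed, for $x\in X_{n,i}$ and $y\in\Delta_{0,j}\setminus X_{n,i}$ lying in the same base partition element, the separation time satisfies $s(x,y)<\tau^n(x)$ (the $F$-itineraries must diverge before the $n$-th return), so $d_\beta(x,y)>e^{-\beta\tau^n(x)}$ and $\mathrm{Lip}(1_{X_{n,i}}|_{\Delta_{0,j}})\le e^{\beta\tau^n}<\infty$. The Lipschitz constant blows up with $n$, but $\B_0$ only requires finiteness, not a uniform bound, so the identity $\tnu(X_{n,i})=c(g\,1_{X_{n,i}})$ applies directly with no approximation needed. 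That said, your alternative route (push forward first to obtain $h_{n,i}\in\B$ with $n$-independent norm, then invoke the spectral decomposition) is also perfectly valid and arguably cleaner, so this does not affect the correctness of the proposal.
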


The following lemma is missing from \cite{BDM} and is added here as a correction 
in the abstract setting
(see also \cite{DWY2} for a similar correction).

\begin{lemma}
\label{lem:nu finite}
The measure $\tnu_0$ satisfies $\tnu_0(-S_\tau \phi_\Delta) < \infty$ and $\tnu(\tau) < \infty$.
\end{lemma}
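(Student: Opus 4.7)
The plan is to derive both statements from a single uniform exponential bound on the total $\tnu$-mass carried by first-return cylinders of fixed length. The starting point is Lemma \ref{lem:nu bounds}: for each $X_i \subset \Delta_0$ and any $y_i \in X_i$,
\[
\tnu(X_i) \le C \lambda^{-\tau_i} e^{S_{\tau_i}\phi_\Delta(y_i)}.
\]
Because $\bm$ is $\phi_\Delta$-conformal and $f_\Delta^{\tau_i}$ maps $X_i$ bijectively onto $\Delta_0$, the bounded distortion estimate (P2) yields the matching comparison $\bm(X_i) \asymp \bm(\Delta_0)\, e^{S_{\tau_i}\phi_\Delta(y_i)}$. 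Combining the two estimates and summing over cylinders with $\tau_i = n$ gives
\[
\sum_{i:\tau_i = n}\tnu(X_i) \le \frac{C}{\bm(\Delta_0)} \lambda^{-n} \sum_{i:\tau_i = n}\bm(X_i) \le C' \lambda^{-n} \bm(\Delta_{n-1}),
\]
using that $\bigcup_{i:\tau_i = n} X_i \subseteq \{x \in \Delta_0 : \tau(x) \ge n\}$, whose $\bm$-measure is $\bm(\Delta_{n-1})$ by the pushforward definition of $\bm$ on higher levels. The tail assumption (P1) then gives $\sum_{i:\tau_i = n}\tnu(X_i) \le C'' \lambda^{-n} e^{-\alpha n}$. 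By Theorem \ref{thm:spectrum}, $\lambda > e^{-\beta}$, and since $\beta$ was chosen strictly smaller than $\alpha$ in the construction of the metric $d_\beta$, this bound decays geometrically with rate $\alpha-\beta > 0$.

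Given this decay, the finiteness of $\tnu(\tau)$ is immediate:
\[
\tnu(\tau) = \sum_{n \ge 1} n \sum_{i:\tau_i = n} \tnu(X_i) \le C'' \sum_{n \ge 1} n\, e^{-(\alpha-\beta)n} < \infty.
\]
For the finiteness of $\tnu_0(-S_\tau\phi_\Delta)$, I apply (P3) with an auxiliary parameter $\ve \in (0,\alpha-\beta)$ (permissible, since (P3) is valid for every $\ve > 0$) to obtain $|S_{\tau_i}\phi_\Delta(y_i)| \le C_\ve e^{\ve \tau_i}$, while (P2) controls the variation of $S_\tau\phi_\Delta$ inside each $X_i$ by an absolute constant. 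Thus
\[
\tnu_0(-S_\tau\phi_\Delta) \le \frac{C_\ve}{\tnu(\Delta_0)} \sum_{n \ge 1} e^{\ve n} \sum_{i:\tau_i = n}\tnu(X_i) \le C''' \sum_{n \ge 1} e^{(\ve - (\alpha-\beta))n},
\]
which converges by the choice of $\ve$.

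The only real subtlety is that a priori the eigenvalue $\lambda$ could be so close to $e^{-\alpha}$ as to cancel the exponential tail of (P1); the quantitative spectral gap $\lambda > e^{-\beta}$ supplied by Theorem \ref{thm:spectrum}, combined with the freedom to choose $\beta$ strictly below the tail rate $\alpha$, is exactly what rules this out and produces convergent geometric series in both computations.
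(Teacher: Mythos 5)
Your proof is correct, and for the substantive half of the lemma --- the finiteness of $\tnu_0(-S_\tau\phi_\Delta)$ --- it is essentially the paper's argument: the Gibbs bound of Lemma~\ref{lem:nu bounds} on $1$-cylinders, the conformality identity $e^{S_\tau\phi_\Delta(y_i)}=\bm(X_i)/\bm(\Delta_0)$, the tail bound (P1), the subexponential growth (P3) with $\ve<\alpha-\beta$, and the spectral information $\lambda>e^{-\beta}$ combine in exactly the same way to give a convergent series with exponent $\alpha-\beta-\ve>0$. The one place where you diverge from the paper is the claim $\tnu(\tau)<\infty$: you re-derive it from the same machinery, obtaining the exponential bound $\tnu(\tau=n)\le C''e^{-(\alpha-\beta)n}$ and then summing $n\,e^{-(\alpha-\beta)n}$. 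The paper instead observes that this part is trivial by invariance: since $\tnu(\Delta_\ell)=\tnu(\tau>\ell)$ for each level $\ell$, one has $\sum_n n\,\tnu(\tau=n)=\sum_\ell \tnu(\Delta_\ell)=\tnu(\Delta)=1$, with no appeal to Lemma~\ref{lem:nu bounds}, (P1) or the spectral gap. Your route costs more but buys more --- exponential decay of $\tnu(\tau=n)$ rather than bare summability of $n\,\tnu(\tau=n)$ --- though that extra strength is not needed for the lemma. Both arguments are sound.
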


\begin{proof}
That $\tnu_0(\tau) < \infty$ is trivial since $\tnu_0$ is a restriction of $\tnu$ and $\tnu(\Delta)=1$:
\[
\int_{\Delta_0} \tau \, d\tnu_0 = (\tnu(\Delta_0))^{-1} \sum_n n \tnu(\tau = n) 
= (\tnu(\Delta_0))^{-1} \tnu(\Delta) < \infty .
\]

To show that $\tnu_0(-S_\tau \phi_\Delta) < \infty$, we use the bounds given by
Lemma~\ref{lem:nu bounds} as well as assumption (P3).  
Note that by definition of conformal measure,
we have
\begin{equation}
\label{eq:pot bound}
e^{S_\tau \phi_\Delta(y_i)} = \frac{ \bm(X_i)}{\bm(\Delta_0)} 
\end{equation}
for some $y_i \in X_i$ and each $i$.
Choosing $\ve <  \alpha - \beta$ in \eqref{eq:finite pressure} and setting
$c_0 = (\tnu(\Delta_0))^{-1}$, we write
\[
\begin{split}
\int_{\Delta_0} - S_\tau \phi_\Delta \, d \tnu_0 &  
\leq c_0 \sum_i \big|S_\tau \phi_\Delta|_{X_i}\big|_\infty  \tnu(X_i) 
\leq C \sum_i e^{\ve \tau(X_i)} \lambda^{-\tau(X_i)} e^{S_\tau \phi_\Delta(y_i)} 
\\
& \leq C' \sum_n  e^{\ve n} \lambda^{-n} \bm(\tau = n) 
\le C'' \sum_n e^{-(\alpha +\log \lambda  - \ve)n} .
\end{split}
\]
Recall that $\lambda > e^{-\beta}$ so that $\log \lambda  > - \beta$.  Thus the exponent
in the sum above is greater than $\alpha - \beta - \ve > 0$
by choice of $\ve$, and so the series converges.
\end{proof}

Lemmas~\ref{lem:nu bounds} and \ref{lem:nu finite} imply that $\tnu_0$ is a Gibbs measure 
with respect to the
potential $S_\tau \phi_\Delta - \tau \log \lambda$.

Notice that for $x \in \Delta_0$,
$S_\tau \phi_\Delta (x) = \sum_{i=0}^{\tau(x)-1} \phi_\Delta (f_\Delta^ix)$.  However,
$\phi_\Delta(f_\Delta^ix) =0$ for $i<\tau(x)-1$, so that 
$S_\tau \phi_\Delta(x) = \phi_\Delta(f_\Delta^{\tau-1}x)$.
Using this, for $\eta_0 \in \M_{\hF}$, we have
\begin{equation}
\label{eq:first int}
\int_{\Delta_0} S_\tau \phi_\Delta \, d\eta_0 
= \eta(\Delta_0)^{-1} \int_{f_\Delta^{-1}\Delta_0} \phi_\Delta \,d\eta
= \eta(\Delta_0)^{-1} \int_\Delta \phi_\Delta \,d\eta.
\end{equation}
so that $\eta(-\phi_\Delta) < \infty$ if and only if 
$\eta_0(-S_\tau \phi_\Delta) < \infty$.  Thus there is a 1-1 correspondence
between the relevant measures in $\M_{\hF}$
and $\M^H_{f_\Delta}$, the set of ergodic, $f_\Delta$ invariant probability measures
supported on $\hDelta^\infty$.   
This implies in particular that $\tnu(-\phi_\Delta) < \infty$
by Lemma~\ref{lem:nu finite} so that $\tnu \in \M^H_{f_\Delta}$.
This leads to the following
equilibrium principle for $f_\Delta$.

\begin{proposition} (\cite[Theorem 2.9]{BDM})
\label{prop:var tower}
Suppose $\psi \in \B$, $\psi \geq 0$, satisfies $\tnu(\psi) > 0$ and $\int \psi \, d\bm =1$.
Let $\e(\bm_\psi)$ be the escape rate of $\bm_\psi := \psi \bm$ from $\hDelta$.  Then
\[
- \e(\bm_\psi) = \log \lambda = \sup_{\eta \in \M^H_{f_\Delta}} \left\{  h_\eta(f_\Delta)
  + \int_\Delta  \phi_\Delta \, d\eta  : - \int_{\Delta} \phi_\Delta \, d\eta < \infty \right\} .
\]
Moreover, $\tnu$ is the only nonsingular $\f_\Delta$-invariant probability measure which
attains the supremum. 
\end{proposition}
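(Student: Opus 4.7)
The first equality $-\e(\bm_\psi)=\log\lambda$ is immediate from Proposition~\ref{prop:rates}(ii): the assumption $\tnu(\psi)>0$ together with $\psi\in\B$ forces the limit $\hLp^n\psi/|\hLp^n\psi|_1\to g$, and consequently $|\hLp^n\psi|_1/\lambda^n$ converges to a positive constant, so that $\e(\bm_\psi)=-\log\lambda$.

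For the variational formula, the plan is to push everything down to the induced system $\hF=\f_\Delta^\tau$ on $\hDelta^\infty\cap\Delta_0$, where we can exploit the Gibbs property of $\tnu_0$. Given any $\eta\in\M^H_{f_\Delta}$ with $\eta(-\phi_\Delta)<\infty$, the bijection described around \eqref{eq:first int} associates the induced measure $\eta_0\in\M_{\hF}$ with $\eta_0(\tau)<\infty$ and $\eta_0(-S_\tau\phi_\Delta)<\infty$. Abramov's formula (together with the identity $S_\tau\phi_\Delta(x)=\phi_\Delta(f_\Delta^{\tau-1}x)$ used in \eqref{eq:first int}) yields
\[
h_\eta(f_\Delta)+\int_\Delta\phi_\Delta\,d\eta \;=\; \frac{h_{\eta_0}(\hF)+\int S_\tau\phi_\Delta\,d\eta_0}{\eta_0(\tau)}.
\]
Thus the supremum equals $\log\lambda$ (attained uniquely by $\tnu$) if and only if
\[
\sup_{\eta_0\in\M_{\hF}}\Bigl\{h_{\eta_0}(\hF)+\int\bigl(S_\tau\phi_\Delta-\tau\log\lambda\bigr)d\eta_0 : \eta_0(-S_\tau\phi_\Delta)<\infty\Bigr\}=0,
\]
with $\tnu_0$ the unique maximizer.

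Now $\hF$ is topologically conjugate to the full shift on the countable alphabet indexing the partition $\{X_i\}$ (since every return element maps fully onto $\Delta_0$), and by (P2) combined with the bounded distortion at return times, the induced potential $\Psi:=S_\tau\phi_\Delta-\tau\log\lambda$ is locally H\"older in the separation metric $d_\beta$. Lemma~\ref{lem:nu bounds} identifies $\tnu_0$ as a Gibbs measure for $\Psi$ with Gibbs constant $P=0$, and Lemma~\ref{lem:nu finite} guarantees $\tnu_0(-\Psi)<\infty$. The standard uniqueness theorem for equilibrium states of locally H\"older potentials on topologically mixing countable Markov shifts (Sarig's Ruelle-Perron-Frobenius theory, applicable because the shift has the BIP property — indeed it is a full shift) then asserts that $\tnu_0$ is the unique equilibrium state for $\Psi$ in the class of invariant probability measures with $\int(-\Psi)d\eta_0<\infty$, and that the pressure of $\Psi$ under $\hF$ equals $0$. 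Pulling this back through Abramov gives the stated variational principle and uniqueness of $\tnu$.

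The main obstacle is the invocation of uniqueness of the equilibrium state for $\Psi$ on a countable Markov shift: one must verify that $\Psi$ has summable variations (from (P2) and \eqref{eq:bounded dist delta}), that the pressure coincides with the Gibbs constant (requiring the Gibbs bound of Lemma~\ref{lem:nu bounds} combined with the integrability from Lemma~\ref{lem:nu finite}), and that the relevant class of competitors is precisely those $\eta_0$ with $\eta_0(-\Psi)<\infty$. These are standard once the setup is in place, but require care because $S_\tau\phi_\Delta$ is unbounded; the condition (P3) on subexponential growth is exactly what allows the Gibbs/equilibrium correspondence to extend from bounded to locally H\"older potentials with finite Gibbs integral. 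Once this is established the uniqueness descends to $\M^H_{f_\Delta}$ because the Abramov correspondence is one-to-one on the relevant class.
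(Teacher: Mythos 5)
Your argument is correct and follows essentially the route the paper intends: the proposition is imported from \cite[Theorem 2.9]{BDM} rather than proved here, and your reconstruction uses exactly the scaffolding assembled in Section~\ref{sec:var delta} --- Proposition~\ref{prop:rates}(ii) for the identity $-\e(\bm_\psi)=\log\lambda$, the Abramov correspondence around \eqref{eq:first int}, and Lemmas~\ref{lem:nu bounds} and~\ref{lem:nu finite} identifying $\tnu_0$ as a Gibbs measure for $S_\tau\phi_\Delta-\tau\log\lambda$ with finite Gibbs integral. The only difference is that you close the argument by citing Sarig's uniqueness theorem for locally H\"older potentials on a BIP (here full) countable shift, whereas \cite{BDM} derives the equilibrium property and uniqueness directly from the Gibbs bounds; both routes are valid given the integrability supplied by Lemma~\ref{lem:nu finite}.
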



\section{Proof of Theorem~\ref{thm:accim}}

\label{sec:project}

In this section, we return to our specific class of maps described in Sect.~\ref{sec:setup}
and use the results of Section~\ref{sec:LY} to obtain conditionally
invariant measures absolutely continuous with respect to the 
$(\vf_t - p_t)$-conformal measures $m_t$, where
$\vf_t = - t \log |Df|$
and $p_t = P_{\M_f}(\vf_t)$.  In order to invoke the results of Section~\ref{sec:LY},
we first verify properties (P1)-(P4) of the constructed towers.

To distinguish between holes in $I$ and $\Delta$, we shall denote by $H$ the hole in
$I$ and by $\tH = \pi^{-1}H$ the hole in $\Delta$.  Thus for consistency,
$\tH = \cup_{\ell \ge 1} \tH_\ell$ and $\tH_\ell = \cup_j \tH_{\ell,j}$.


\subsection{(P1)-(P4) are satisfied with uniform constants}
\label{ssec:uniform p1-p4}

We fix $\Crit_{\mbox{\tiny hole}}$ and $\delta_0>0$ as in Section~\ref{sec:tails}.
Then for $H \in \H(h)$ with $h$ sufficiently small,
by \cite{BDM} we have an inducing scheme and
Young tower satisfying properties
(A1) and (A2).  Let $\bm_t$ denote the reference measure on $\Delta$ induced by $m_t$, 
the $\phi_t$-conformal measure.  
Recall that by Theorem~\ref{thm:lift}, this measure is guaranteed to lift to $\Delta$ if $t\in [t_0, t_1]$. 

By Proposition~\ref{prop:uniform tails}, we have (P1) satisfied uniformly with
respect to $\bm_t$ 
for some $\alpha>0$ (the same $\alpha$ as in Proposition~\ref{prop:uniform tails})
and all $t \in [t_0, t_1]$.  
We choose $\beta \in (0, \alpha)$ and add the restriction that 
$\beta \le t_0 \log \xi$ (see the proof of Lemma~\ref{lem:A2}).  
Then (P2) follows from Lemma~\ref{lem:distortion} and (A1)(a) with a possibly
larger constant $C_d$
for the potentials $\phi_{\Delta,t}$ induced by $\phi_t = \vf_t - p_t$. 
(P3) is automatic
for our class of maps since $|Df|$ is bounded above and due to (A1)(a), we have
$|S_\tau(\phi_t)| \le C \tau$ at return times.  Again, all constants are uniform
for $t \in [t_0, t_1]$.

It remains to verify (P4) for the constructed towers.
 We do this via the following lemma.

\begin{lemma}
\label{lem:uniform P4} 
There exists $h>0$ sufficiently small such that if $H\in\H(h)$ then (P4) is satisfied with respect to the measure
$\bm_t$ for all $t \in [t_0, t_1]$.
\end{lemma}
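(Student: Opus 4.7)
The plan is to split the series $q_t := \sum_{\ell\ge 1} \bm_t(\tH_\ell)\, e^{\beta(\ell-1)}$ at a cutoff level $L$ and treat the two pieces differently: the tail using the uniform exponential decay of $\bm_t(\Delta_\ell)$ furnished by Proposition~\ref{prop:uniform tails}, and the head by exploiting the fact that $m_t(H)$ can be made arbitrarily small uniformly in $t\in[t_0,t_1]$ by shrinking $h$.

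First, I would fix the tail. Since $\tH_\ell\subset \Delta_\ell$ and $\bm_t(\Delta_\ell)=m_t(\tau>\ell)\le C_0 e^{-\alpha \ell}$ with constants independent of $t\in[t_0,t_1]$ and $H\in \H(h)$, while $\beta<\alpha$ by choice,
\[
\sum_{\ell>L}\bm_t(\tH_\ell)\, e^{\beta(\ell-1)}\le \frac{C_0\, e^{-(\alpha-\beta)L}}{1-e^{-(\alpha-\beta)}},
\]
which tends to $0$ as $L\to\infty$ uniformly in $t$ and $H$. I would choose $L$ large enough that this tail is less than $\tfrac12\,(1-e^{-\beta})\,\bm_t(\Delta_0)/(1+C_d)$. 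A uniform positive lower bound on $\bm_t(\Delta_0)=m_t(X)$ on $[t_0,t_1]$ follows from continuous dependence of $m_t$ on $t$ established in \cite{ITrev} together with the fact that $X$ is a fixed interval carrying positive $m_t$-mass for each $t$.

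Next I would handle the head. Because $\bm_t$ is preserved under non-return levels of $\Delta$, so that $\bm_t(\Delta_{\ell,j})=m_t(X_j)$ whenever $\ell<\tau_j$, and because the inducing scheme respects the hole, we get
\[
\bm_t(\tH_\ell)\le m_t\bigl(X\cap f^{-\ell}(H)\bigr)=\int 1_H\cdot \Lp^\ell_{\phi_t}1_X\, dm_t,
\]
where the last equality uses the $\Lp_{\phi_t}$--composition duality coming from conformality of $m_t$. For each fixed $\ell\le L$, $\Lp^\ell_{\phi_t}1_X\in L^1(m_t)$ has $L^1$-norm equal to $m_t(X)$ and depends continuously on $t$; uniform absolute continuity of this finite family of densities on the compact interval $[t_0,t_1]$ then guarantees $\int_H \Lp^\ell_{\phi_t}1_X\, dm_t\to 0$ as $m_t(H)\to 0$, uniformly in $t$ and $H$. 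Combined with the fact that $m_t(H)\to 0$ uniformly in $t$ as $h\to 0$, the head also falls below $\tfrac12\,(1-e^{-\beta})\,\bm_t(\Delta_0)/(1+C_d)$ for $h$ small enough; summing the two bounds gives (P4).

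The main obstacle is this last assertion: that $m_t(H)\to 0$ uniformly in $t\in[t_0,t_1]$ as $h\to 0$. For this I would invoke a quantitative regularity estimate of the form $m_t(B(x,r))\le C r^{\delta}$ with $C,\delta>0$ independent of $t$ in our range, which can be extracted from the construction of $m_t$ in \cite{ITrev} together with condition (C1) and the bounded distortion of Lemma~\ref{lem:distortion}---the heuristic being that conformality with respect to $\phi_t=\varphi_t-p_t$ together with uniform expansion away from $\Crit_c$ prevents $m_t$ from concentrating on short intervals. A secondary bookkeeping issue is that the inducing scheme, and thus the base pieces $X_j$, depend on $H$; but all constants in the tail rate, the distortion, and the lower bound on $m_t(X)$ are uniform over $H\in\H(h)$ by construction in \cite{BDM} and Proposition~\ref{prop:uniform tails}, so this dependence causes no real trouble.
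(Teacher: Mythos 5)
Your decomposition of $q_t=\sum_{\ell\ge 1}\bm_t(\tH_\ell)e^{\beta(\ell-1)}$ into a tail $\ell>L$ and a head $\ell\le L$ is a legitimately different strategy from the paper's, and the tail half is correct and clean: $\bm_t(\tH_\ell)\le\bm_t(\Delta_\ell)\le C_0e^{-\alpha\ell}$ with constants uniform in $t$ and $H$ by Proposition~\ref{prop:uniform tails}, and $\beta<\alpha$, so the tail is uniformly small for $L$ large. The formal manipulations in the head are also fine ($\bm_t(\tH_\ell)\le m_t(X\cap f^{-\ell}H)=\int_H\Lp^\ell_{\phi_t}1_X\,dm_t$ by conformality and disjointness of the pullbacks $f_\Delta^{-\ell}\tH_{\ell,j}$ in $\Delta_0$).

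The genuine gap is exactly the point you flag as the ``main obstacle'' and then do not close: you need $m_t(H)\to 0$ (in fact $m_t(f^{-\ell}H)\to 0$ for $\ell\le L$) uniformly in $t\in[t_0,t_1]$ and in the position of $H$, as $h\to 0$. But $\H(h)$ only constrains the \emph{Lebesgue} measure $m_1(H)\le h$, and for $t\neq 1$ the conformal measure $m_t$ is singular with respect to Lebesgue, so there is no a priori implication. The uniform regularity estimate $m_t(B(x,r))\le Cr^{\delta}$ with $C,\delta$ independent of $x$ and $t$ that you propose to ``extract from \cite{ITrev}'' is not established there or in this paper; the only scaling information the paper proves (Lemma~\ref{lem:scale}) is at $\mu_t$-typical or periodic points, not uniformly in $x$, and making it uniform is a nontrivial piece of work (note also that near critical values $\Lp^\ell_{\phi_t}1_X$ is unbounded, so your ``uniform absolute continuity of a finite family of densities'' with respect to a $t$-dependent measure also needs an argument). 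The paper's proof is built precisely to avoid this: using $\bm_t(\tH_{\ell,j})/\bm_t(\Delta_0)=e^{S_\tau\phi_{\Delta,t}(y)}$ and the comparison $e^{S_\tau\phi_{\Delta,t}}\le C_d e^{\tau b_t}e^{S_\tau\vf_\Delta}$ with $b_t=(1-t)\chi_M-p_t$, it converts everything into $\bm_1$-mass of tower cylinders, where the hypothesis $m_1(H)\le h$ and the Lebesgue estimates of \cite{BDM} apply directly; the sum is then split not by level but into cylinders that meet $H$ during bound periods (controlled via the slow-recurrence condition (H1), which forces such $\ell$ to be large) and during free periods (controlled via $\sum_{\mathrm{free}}\bm_1(\tH_{\ell,j})\le Cm_1(H)$). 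If you want to keep your level-cutoff architecture, you must either prove the uniform upper regularity of $m_t$ on intervals, or replace the head estimate by an argument that, like the paper's, passes through Lebesgue measure via conformality and bounded distortion.
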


\begin{proof}
We need to show,
\beq
\label{eq:qt}
\sum_{\ell \ge 1} \frac{\bm_t(\tH_\ell)}{\bm_t(\Delta_0)} e^{\beta (\ell -1)} 
< \frac{1- e^{-\beta}}{1 + C_d}.
\eeq

First assume that $t\in [t_0,1]$.
Recall that each component $\tH_{\ell,j} \subset \Delta$ is a 1-cylinder for the tower
map $f_\Delta$.  
We have for some $y \in \tH_{\ell,j}$,
\beq
\label{eq:equiv t}
\begin{split}
\frac{\bm_t(\tH_{\ell,j})}{\bm_t(\Delta_0)} 
& = e^{S_\tau \phi_{\Delta,t}(y)} = e^{tS_\tau \vf_\Delta(y) - \tau(y) p_t} \\
& = e^{(t-1)S_\tau \vf_\Delta(y) - \tau(y)p_t} e^{S_\tau \vf_\Delta(y)} \\
& \le C_d e^{\tau(f^{-\ell}\tH_{\ell,j}) [(1-t) \chi_M - p_t ]} \frac{\bm_1(\tH_{\ell,j})}{\bm_1(\Delta_0)},
\end{split}
\eeq
where $\chi_M$ is as in \eqref{eq:chi}.  
We use this to estimate \eqref{eq:qt}, 
\[
\sum_{\ell \ge 1} \frac{\bm_t(\tH_\ell)}{\bm_t(\Delta_0)} e^{\beta (\ell -1)}  \le C_d \delta_1^{-1} \sum_{\ell \ge 1} \sum_{j} e^{\beta (\ell -1)} 
e^{\tau(f^{-\ell}\tH_{\ell,j})[(1-t) \chi_M - p_t]} \, \bm_1(\tH_{\ell,j}) ,
\]
where $\delta_1 = m_1(X)$.

Note that $f_\Delta^{-\ell}\tH_{\ell,j}$
is a 1-cylinder for the induced map $F:X \to X$. 
Set 
\[
b_t  = (1-t)\chi_M - p_t  \; \; \mbox{and} \; \;
A_n = \{ \tH_{\ell,j} : \tau(f_\Delta^{-\ell}\tH_{\ell,j}) = n \} .
\]
Then since $\ell \le \tau(f^{-\ell} (\tH_{\ell,j}))$, our estimate becomes,
\[
\sum_{\ell \ge 1} \frac{\bm_t(\tH_\ell)}{\bm_t(\Delta_0)} e^{\beta (\ell -1)} 
 \le C \sum_{n \ge 1} \sum_{\tH_{\ell,j} \in A_n} e^{\beta \ell} e^{n b_t} \bm_1(\tH_{\ell,j})
 \le C \sum_{n \ge 1} e^{(\beta + b_t - \alpha_1) n},
 \]
 since $A_n \subset \{ \tau = n \}$.  Note that $\alpha_1 - b_t \ge \alpha$ and $\beta < \alpha$ 
 by choice of
 $[t_0,t_1]$ and $\beta$ so that the sum is uniformly bounded for $t$ in this interval.

In order to show that the sum can in fact be made arbitrarily small, we split it into two parts,
depending on whether $\tH_{\ell,j}$ is created by an intersection of $f^\ell(X)$ with $H$
during a free period or during a bound period.  Thus
\[
\sum_{\ell,j} e^{\beta \ell} e^{\tau_{\ell,j} b_t} \bm_1(\tH_{\ell,j})
= \sum_{\mbox{\scriptsize bound}} e^{\beta \ell} e^{\tau_{\ell,j} b_t} \bm_1(\tH_{\ell,j}) 
+ \sum_{\mbox{\scriptsize free}} e^{\beta \ell} e^{\tau_{\ell,j} b_t} \bm_1(\tH_{\ell,j}) ,
\]
where $\tau_{\ell,j} = \tau(f_\Delta^{-\ell} \tH_{\ell,j})$.

To estimate the sum over bound pieces, we use the slow approach condition
(H1).  Suppose $\omega \subset X$ is a 1-cylinder in $X$ such that
$f^n(\omega) \subset H$ during a bound period, $n < \tau(\omega)$, and $c \in \Crit_c$ is 
the last critical point visited by $\omega$ at time $n-\ell$.  Since $\omega$ is bound,
we have $|f^\ell x - f^\ell c| \le \delta_0 e^{-2\vartheta_c \ell}$ for all $x \in f^{n-\ell}\omega$
by \cite[Sect. 2.2]{DHL}.  This implies that $\dist(f^\ell c, \partial H) \le m_1(H) + \delta_0e^{-2\vartheta_c \ell}$.
On the other hand, (H1) requires $\dist(f^\ell c, \partial H) \ge \delta_0 e^{- \vartheta_c \ell }$.
This forces, 
\[
\delta_0 e^{- \vartheta_c \ell} \le m_1(H) + \delta_0e^{-2\vartheta_c \ell}
\implies \ell \ge - \vartheta_c^{-1} \log(h/\delta_c) ,
\]
where $\delta_c = \delta_0 (1-e^{-\vartheta_c})$.
Thus since $\ell \le \tau_{\ell,j}$,
\[
\begin{split}
\sum_{\mbox{\scriptsize bound}} e^{\beta \ell} e^{\tau_{\ell,j} b_t} \bm_1(\tH_{\ell,j}) 
& \le \sum_{\mbox{\scriptsize bound}} e^{(\beta + b_t) \tau_{\ell,j}} \bm_1(\tH_{\ell,j}) \\
& \le \sum_{n > - \vartheta_c^{-1} \log (h/\delta_c)} C e^{(\beta+b_t - \alpha_1) n}
\le C' h^{\vartheta_c^{-1}(\alpha - \beta)}.
\end{split}
\]

To estimate the sum over free pieces, we use the following estimate from 
\cite[Proof of Lemma 4.5]{BDM},
\[
\sum_{\mbox{\scriptsize free}} \bm_1(\tH_{\ell,j}) \le C m_1(H) .
\]
Then
\[
\begin{split}
\sum_{\mbox{\scriptsize free}} e^{\beta \ell} e^{\tau_{\ell,j} b_t} \bm_1(\tH_{\ell,j}) 
& \le \sum_{\tau_{\ell,j} \le - \log h} e^{(\beta + b_t)\tau_{\ell,j}} \bm_1(\tH_{\ell,j}) 
+ \sum_{\tau_{\ell,j} > - \log h} e^{(\beta + b_t) \tau_{\ell,j}} \bm_1(\tH_{\ell,j})  \\
& \le e^{- (\beta + b_t) \log h}  \sum_{\tau_{\ell,j} \le - \log h} \bm_1(\tH_{\ell,j})
+ \sum_{\tau_{\ell,j} > - \log h} e^{(\beta + b_t - \alpha_1) \tau_{\ell,j}}  \\
& \le C h^{1-(\beta + b_t)} + h^{\alpha_1 - \beta - b_t},
\end{split}
\]
where all exponents are positive due to the choice of $\beta$ and $t_0$.

The argument for $t \in [1, t_1]$ is similar with $b_t$ defined by
$(1-t) \chi_m - p_t$.
\end{proof}


\subsection{Pushing forward densities on $I$}
\label{ssec:convergence}

We have proved that (P1)-(P4) hold with uniform constants for all $t \in [t_0, t_1]$
and all $H \in \H(h)$ for $h$ sufficiently small.  We now fix such an $H \in \H(h)$.
By Theorem~\ref{thm:spectrum}, for each $t$, we have a conditionally invariant density 
$\tg^H_t \in \B$ satisfying 
$\hLp_{\phi_{\Delta, t}} \tg^H_t = \lambda^H_t \tg^H_t$, where $\lambda^H_t<1$ is a
simple eigenvalue of $\hLp_{\phi_{\Delta,t}}$ with maximum modulus. 
We use the spectral gap for $\hLp_{\phi_{\Delta,t}}$ on $\hDelta$ to obtain information
about the evolution of densities under the action of $\hLp_{\phi_t}$ on $I$.

The philosophy is the following.  For $\tilde{\psi} \in \B$, let 
\[
\pa_{\pi,t} \tilde\psi(x) = \sum_{y \in \pi^{-1}x}
\frac{\tilde{\psi}(y)}{ J_t \pi(y)}
\]
where $J_t \pi$ is the Jacobian of $\pi$ with respect to the measures
$m_t$ and $\bm_t$.  The commuting relation $f^n \circ \pi = \pi \circ f^n_\Delta$ 
implies
\[
\pa_{\pi,t}( \hLp^n_{\phi_{\Delta, t}} \tilde{\psi} ) 
= \hLp^n_{\phi_t} (\pa_{\pi,t} \tilde \psi)
\]
so that the evolution of densities on $I$ under $\hLp_{\phi_t}$ matches
the evolution of densities on $\Delta$ under $\hLp_{\phi_{\Delta,t}}$ for those
densities in $\pa_{\pi,t} \B$.  
Indeed, $|\pa_{\pi, t} \tilde \psi|_{L^1(m_t)} = | \tilde \psi |_{L^1(\bm_t)}$ 
so that mass is
preserved.

The question of which densities on $I$ can be realized as projections of
elements of $\B$ (or $\B_0$) is addressed in \cite{BDM} and is 
somewhat subtle and system dependent.  
Given $\psi \in \cont^{p}(I)$,
we define $\tilde \psi = \psi \circ \pi$ and it is a consequence of (A1) that 
$\tilde \psi \in \B_0$ for all $p \ge \beta/ \log \xi$, where $\xi>1$ is from (A1)(a) 
\cite[Lemma 4.1]{BDM}.
However, in general, $\pa_{\pi,t} (\psi \circ \pi) \neq \psi$ so that this is not sufficient to characterize
those densities which may be realized as projections from of elements in $\B$.

Note that this requirement is different from the problem of liftability of 
the measure $\psi m_t$.  For an invariant measure $\mu$, 
if $\mu$ lifts to an invariant measure
$\tmu$ on $\Delta$, then $\pi_*\tmu = \mu$ as described 
in Section~\ref{ssec:lifting to Delta},
but for a density with respect to $m_t$, this may not be the case
since in general, $\pi_* \bm_t \neq m_t$, even for $t=1$.  
In order to proceed, we will
need the following lemma, which is essentially a version of property (A2)
with respect to the measures $m_t$.

\begin{lemma}
\label{lem:A2}
Let $\mathcal{I} \subset [0,L] \times \N$ be as in (A2).  Then for all $t \in [t_0,t_1]$,
\begin{enumerate}
  \item[(a)] $m_t(I \sm \cup_{(\ell,j) \in \mathcal{I}} \pi(\hDelta_{\ell,j})) =0$
  \item[(b)] $\pi(\hDelta_{\ell_1,j_1}) \cap \pi(\hDelta_{\ell_2,j_2}) = \emptyset$
for all but finitely many $(\ell_1,j_1), (\ell_2,j_2) \in \mathcal{I}$;
  \item[(c)] Define $J_t\pi_{\ell,j} := J_t\pi|_{\hDelta_{\ell,j}}$.  Then
$\sup_{(\ell,j) \in \mathcal{I}}|J_t \pi_{\ell,j}|_\infty 
+ \mbox{Lip}(J_t\pi_{\ell,j}) < \infty$.
\end{enumerate}
As a consequence, $\cont^p(I) \subset \pa_{\pi,t}( \B_0)$ for all 
$p \ge \beta / \log \xi$,
where $\xi >1$ is from (A1).
\end{lemma}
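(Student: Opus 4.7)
The approach is to transfer the statements of property (A2), proved in \cite[Lemma 4.6]{BDM} with respect to $m_1$, to the general conformal measure $m_t$, exploiting throughout the crucial structural feature of $\mathcal{I}$ that $\ell \le L$ is uniformly bounded. Part (b) is purely set-theoretic (the collection $\{\pi(\hDelta_{\ell,j})\}$ does not depend on $t$), so it is immediate from (A2)(b). For part (a), the complement $E := I \sm \bigcup_{(\ell,j)\in\mathcal{I}}\pi(\hDelta_{\ell,j})$ is also independent of $t$; by the construction in \cite{BDM}, $E$ is contained in a countable union of forward iterates of the finite set $\partial X \cup \Crit_c \cup \partial H$. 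Because all periodic orbits of $f$ are repelling and $|Df^n|\to\infty$ on non-eventually-periodic orbits, the $\phi_t$-conformality forces $m_t$ to be nonatomic, so $m_t(E) = 0$ for all $t \in [t_0,t_1]$.

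For part (c), I would first derive an explicit formula for $J_t\pi$. Iterating the pushforward definition of $\bm_t$ together with the $\phi_t$-conformality of $m_t$ under $f$ yields, for $x \in \hDelta_{\ell,j}$ with pullback $x^- = f_\Delta^{-\ell}x \in X_j$,
\[
J_t\pi(x) = e^{-S_\ell\phi_t(\pi(x^-))} = |Df^\ell(\pi(x^-))|^t\,e^{\ell p_t}.
\]
Since $\ell \le L$ uniformly, the sup norm is bounded by $|Df^L|_\infty^t e^{Lp_t}$. For the Lipschitz constant in $d_\beta$, I would combine (A1)(a), which gives $|\pi(x^-) - \pi(y^-)| \le C\xi^{-s(x,y)}$, with the bounded-distortion control of Lemma~\ref{lem:distortion}; the hypothesis $\beta \le t_0\log\xi$ ensures this translates to a uniform bound in terms of $d_\beta$.

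For the consequence, given $\psi \in \cont^p(I)$ with $p \ge \beta/\log\xi$, I would define
\[
\tilde\psi(x) = \frac{\psi(\pi(x))\,J_t\pi(x)}{N(\pi(x))} \text{ on } \bigcup_{(\ell,j)\in\mathcal{I}}\hDelta_{\ell,j}, \quad \tilde\psi \equiv 0 \text{ elsewhere},
\]
where $N(y) = \#\{(\ell,j) \in \mathcal{I} : y \in \pi(\hDelta_{\ell,j})\}$ is uniformly bounded thanks to (b). A direct computation using (a) verifies $\pa_{\pi,t}\tilde\psi = \psi$; boundedness of $\tilde\psi$ follows from (c), while the Lipschitz bound in $d_\beta$ comes from the H\"older-to-Lipschitz transfer $|\psi\circ\pi(x) - \psi\circ\pi(y)| \le C|\psi|_{\cont^p}\xi^{-p\,s(x,y)} \le C\,d_\beta(x,y)$ provided $p\log\xi \ge \beta$, combined with the Lipschitz control on $J_t\pi$ and $1/N$ inherited from (c) and (b). The main obstacle is part (a): identifying $E$ explicitly as a $\sigma$-compact set built from forward orbits of finitely many points and confirming nonatomicity of $m_t$ across the parameter range. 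Everything else is a direct consequence of the uniform bound $\ell \le L$ coupled with bounded distortion.
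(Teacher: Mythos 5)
Your treatment of parts (a)--(c) matches the paper's: (b) is measure-independent, (a) reduces to the countability of the uncovered set plus nonatomicity of $m_t$, and (c) rests on the identity $J_t\pi(x)=e^{-S_\ell\phi_t(x_{-\ell})}=(J_1\pi(x))^t e^{\ell p_t}$ together with $\ell\le L$. One point you gloss over in (c): for $t<1$ the map $u\mapsto u^t$ destroys Lipschitz continuity, and the paper's actual mechanism is the inequality $|a^t-b^t|\le|a-b|^t$, which converts the Lipschitz bound on $J_1\pi$ into a $t$-H\"older bound $|J_t\pi(x)-J_t\pi(y)|\le C|\pi(x_{-\ell})-\pi(y_{-\ell})|^t\le C\xi^{-ts_0}$; it is precisely this lost exponent $t$ that makes the hypothesis $\beta\le t_0\log\xi$ (rather than $\beta\le\log\xi$) the correct one. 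You invoke the right hypothesis but omit the step that forces it.

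The genuine gap is in your derivation of the consequence $\cont^p(I)\subset\pa_{\pi,t}(\B_0)$, which the paper obtains by citing \cite[Proposition~4.2]{BDM} once (a)--(c) are in hand. Your explicit lift $\tilde\psi=\psi\circ\pi\cdot J_t\pi/(N\circ\pi)$ satisfies $\pa_{\pi,t}\tilde\psi=\psi$ and is bounded, but its regularity is problematic: $N\circ\pi$ restricted to a fixed $\hDelta_{\ell,j}$ is a step function whose jumps occur at the $\pi$-preimages of endpoints of the finitely many overlapping intervals $\pi(\hDelta_{\ell',j'})$. There is no reason for these jump points to coincide with cylinder boundaries of bounded depth in $\hDelta_{\ell,j}$, so two points $x,y$ on opposite sides of a jump can satisfy $s(x,y)=n$ for arbitrarily large $n$ while $|\tilde\psi(x)-\tilde\psi(y)|$ stays bounded below; then $\mbox{Lip}_{d_\beta}(\tilde\psi|_{\hDelta_{\ell,j}})$ is infinite and $\tilde\psi\notin\B_0$. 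Finiteness of the multiplicity $N$ (which does follow from (b), since $N(y)=k$ forces $\binom{k}{2}$ intersecting pairs) controls the size of $\tilde\psi$ but not its modulus of continuity. A correct construction must distribute $\psi$ among the overlapping elements in a way compatible with the symbolic metric, which is exactly the content of \cite[Proposition~4.2]{BDM}; as written, your final step would fail.
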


\begin{proof}
To prove (a), recall that if we ignore cuts due to the countable 
exponential partition of $B_{\delta_0}(c)$ for each $c \in \Crit_c$, then
$\pi(\Delta_\ell)$ consists of finitely many intervals.  Thus according
to the proof of (A2)(a) in \cite[Lemma 4.6]{BDM},
$I \sm \cup_{(\ell,j) \in \mathcal{I}} \pi(\hDelta_{\ell,j})$ contains at most the endpoints
of these finitely many intervals together with the images of the cuts of the
exponential partition.  This set is countable and so its $m_t$ measure is zero.

Item (b) is independent of the measure and so is trivially true by (A2)(b).

It remains to prove (c). For $x \in \Delta_\ell$, 
let $x_{-\ell} = f_\Delta^{-\ell}x \in \Delta_0$.  Then by conformality and
the definition of $\bm_t$, we have
\begin{equation}
\label{eq:J_1}
J_t\pi(x) = \frac{dm_t(\pi x)}{d\bm_t(x)} = e^{-S_\ell \phi_t(x_{-\ell})} 
= e^{-t S_\ell \vf(x_{-\ell}) + \ell p_t}
= (J_1 \pi(x))^t e^{\ell p_t} .
\end{equation}
Since $\ell \le L$ by definition of $\mathcal{I}$ and due to property (A2)(c)
of $J_1\pi$, the above relation implies the required bound on the 
$L^\infty$-norm of $J_t\pi$ restricted to
elements of $\mathcal{I}$.

To prove the bound on the Lipschitz constant of $J_t\pi$, we restrict our attention
to the case $t \in [t_0,1)$ since for $t \ge 1$, the Lipschitz property of
$J_t\pi$ follows from that of $J_1\pi$.  Now using the fact that
$|a^t - b^t| \le |a-b|^t$ for $t<1$, we use \eqref{eq:J_1} to estimate for 
$x, y \in \Delta_{\ell,j}$,
\[
|J_t\pi(x) - J_t\pi(y)| \le |J_1\pi(x) - J_1\pi(y)|^t e^{\ell p_t}
\le |Df^\ell(\pi(x_{-\ell})) - Df^\ell(\pi(y_{-\ell}))|^t e^{\ell p_t} .
\]
Since $\ell \le L$ and $f$ is $\cont^2$, this bound yields,
\begin{equation}
\label{eq:jt holder}
|J_t\pi(x) - J_t\pi(y)| \le C |\pi(x_{-\ell}) - \pi(y_{-\ell})|^t .
\end{equation}
Let $s_0 = s(x_{-\ell},y_{-\ell})$.  Since $s_0$ is a return time for 
$x_{-\ell}, y_{-\ell}$, we have $|Df^{s_0}| \ge (C_d')^{-1} \xi^{s_0}$ by
(A1)(a). Thus
\[
|\pi(x_{-\ell}) - \pi(y_{-\ell})| \le C_d' \xi^{-s_0} |f^{s_0}(\pi x_{-\ell}) -
f^{s_0} (\pi y_{-\ell})| \le C_d' \xi^{-s_0} \mbox{diam}(X).
\]
Putting this together with \eqref{eq:jt holder} yields
\[
|J_t\pi(x) - J_t\pi(y)| \le C \xi^{-t s_0} \le C e^{-\beta s_0} \le C d_\beta(x,y)
\] 
since $s(x,y) = s_0 - \ell$ and as long as $\xi^{-t} \le e^{-\beta}$, which is true
for $t \ge t_0$ by choice of $\beta \le t_0 \log \xi$.  This completes the proof of (c).

Now using properties (a)-(c) for $J_t\pi$, it follows from
\cite[Proposition~4.2]{BDM} that
$\cont^{p}(I) \subset \pa_{\pi,t} (\B_0)$ for all $p \ge \beta / \log \xi$.
\end{proof}

For $p \ge \beta/ \log \xi$, define $\mathcal{D}^p(I)$ to be the set of nonnegative functions 
$\psi \in \cont^p(I)$ with  $\psi >0$ on $X$.
The following proposition completes the proof of Theorem~\ref{thm:accim}.

\begin{proposition}
\label{prop:conv}
Let $\tmu^H_t = \tg^H_t \bm_t$ and define $\pi_* \tmu^H_t = \mu^H_t = (\pa_{\pi,t} \tg^H_t) \, m_t$.
Then $\mu^H_t$ is a conditionally invariant measure for $f$ with eigenvalue
$\lambda^H_t$.   In addition,
\begin{enumerate}
  \item[(i)]  For all $\psi \in \mathcal{D}^{p}(I)$, 
\[
\lim_{n \to \infty} \frac{\hLp^n_{\phi_t} \psi}{|\hLp^n_{\phi_t} \psi|_1} = \pa_{\pi,t} \tg^H_t \; \;\; 
\mbox{in } L^1(m_t)
\]
and the convergence occurs at an exponential rate
so that $\mu^H_t$ is a geometric conditionally invariant measure, absolutely
continuous with respect to $m_t$.
  \item[(ii)]  Let $\psi \in \mathcal{D}^p(I)$.  The escape rate with respect to the reference
  measure $\psi m_t$ is given by
  \[   \e(\psi m_t) = - \log \lambda^H_t . \]
 \item[(iii)]  Let $\mu_t$ be the equilibrium state for the potential $\phi_t = \vf_t - p_t$ before the
 introduction of the hole.  Then for all $\psi \in \mathcal{D}^p(I)$,
 $\e(\psi \mu_t) = \e(m_t) = - \log \lambda^H_t$ and
 \[
 \lim_{n \to \infty} \frac{\f^n_* (\psi \mu_t)}{ |\f^n_*(\psi \mu_t)|} = \mu^H_t .
 \]
\end{enumerate}
\end{proposition}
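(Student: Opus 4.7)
The overall plan is to transfer the spectral picture for $\hLp_{\phi_{\Delta,t}}$ on $(\hDelta,\bm_t)$, supplied by Theorem~\ref{thm:spectrum} (applicable thanks to the uniform verification of (P1)--(P4) in Section~\ref{ssec:uniform p1-p4}), down to the interval via the semiconjugacy $f\circ\pi=\pi\circ f_\Delta$ and the projection operator $\pa_{\pi,t}$. The conditional invariance of $\mu_t^H$ is immediate: using the commuting relation
\[
\hLp_{\phi_t^H}\circ\pa_{\pi,t}=\pa_{\pi,t}\circ\hLp_{\phi_{\Delta,t}^{\tilde H}}
\]
together with $\hLp_{\phi_{\Delta,t}^{\tilde H}}\tg_t^H=\lambda_t^H\tg_t^H$ yields $\hLp_{\phi_t^H}(\pa_{\pi,t}\tg_t^H)=\lambda_t^H\,\pa_{\pi,t}\tg_t^H$, and then the standard computation from Section~\ref{ssec:transfer} (conformality of $m_t$) shows $\mu_t^H=(\pa_{\pi,t}\tg_t^H)\,m_t$ is conditionally invariant with eigenvalue $\lambda_t^H$.

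For (i), given $\psi\in\mathcal{D}^p(I)$ with $p\ge\beta/\log\xi$, I would set $\tilde\psi:=\psi\circ\pi$, which lies in $\B_0\subset\B$ by the argument of \cite[Lemma 4.1]{BDM} combined with our choice of $\beta$. Because $\psi>0$ on $X=\pi(\Delta_0)$, we have $\tilde\psi>0$ on $\Delta_0$, so $\tnu_t(\tilde\psi)\ge\tilde\psi|_{\Delta_0}\,\tnu_t(\Delta_0)>0$ (the positivity $\tnu_t(\Delta_0)>0$ comes from Lemma~\ref{lem:nu finite} combined with the construction of $\tnu_t$ in Section~\ref{ssec:equilibrium}). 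Proposition~\ref{prop:rates}(i) then gives
\[
\frac{\hLp_{\phi_{\Delta,t}^{\tilde H}}^n\tilde\psi}{|\hLp_{\phi_{\Delta,t}^{\tilde H}}^n\tilde\psi|_{L^1(\bm_t)}}\longrightarrow \tg_t^H\qquad\text{in }\B,
\]
at exponential rate. Applying $\pa_{\pi,t}$, which is an $L^1$-contraction satisfying $|\pa_{\pi,t}\tilde f|_{L^1(m_t)}=|\tilde f|_{L^1(\bm_t)}$, and using the commuting relation $\pa_{\pi,t}(\hLp_{\phi_{\Delta,t}^{\tilde H}}^n\tilde\psi)=\hLp_{\phi_t^H}^n\psi$, we obtain the claimed convergence in $L^1(m_t)$ at exponential rate, identifying $\mu_t^H$ as the geometric conditionally invariant measure.

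For (ii), the key observation is that $(\psi m_t)(\I^n)=|\hLp_{\phi_t^H}^n\psi|_{L^1(m_t)}$ by $\phi_t$-conformality of $m_t$, and the right-hand side equals $|\hLp_{\phi_{\Delta,t}^{\tilde H}}^n\tilde\psi|_{L^1(\bm_t)}$ by the $L^1$-isometry of $\pa_{\pi,t}$. Theorem~\ref{thm:spectrum}(i) gives $\lambda_t^{-n}\hLp_{\phi_{\Delta,t}^{\tilde H}}^n\tilde\psi\to c(\tilde\psi)\tg_t^H$ in $\B$ with $c(\tilde\psi)=\tnu_t(\tilde\psi)/\tnu_t(\tg_t^H)>0$ by the argument above, so $|\hLp_{\phi_t^H}^n\psi|_{L^1(m_t)}\asymp (\lambda_t^H)^n$ and hence $\e(\psi m_t)=-\log\lambda_t^H$. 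Taking $\psi\equiv 1$ in particular gives $\e(m_t)=-\log\lambda_t^H$.

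Part (iii) is the one that requires the most care, and I would flag it as the main obstacle, because $g_t^0\notin\cont^p(I)$. The remedy is to work on the tower from the start: the equilibrium state $\mu_t$ lifts to an $f_\Delta$-invariant probability measure $\tmu_t$ with density $\tg_t^0\in\B$ relative to $\bm_t$ (this is the standard Young-tower construction, and $\tg_t^0$ is in fact Lipschitz in the $d_\beta$ metric with $\pa_{\pi,t}\tg_t^0=g_t^0$). Then for $\psi\in\mathcal{D}^p(I)$ the measure $\psi\mu_t=(\psi g_t^0)m_t$ admits the tower representative $(\psi\circ\pi)\tg_t^0\,\bm_t$, and the product $(\psi\circ\pi)\tg_t^0$ belongs to $\B$ because $\B$ is stable under multiplication by bounded Lipschitz functions in the $d_\beta$ metric (the $\|\cdot\|_\lip$ and $\|\cdot\|_\infty$ seminorms satisfy the usual Leibniz-type estimates with the same exponential weight $e^{-\beta\ell}$). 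Positivity $\tnu_t((\psi\circ\pi)\tg_t^0)>0$ again follows from positivity on $\Delta_0$, since $\tg_t^0\ge\ve>0$ on $\Delta_0$ by \cite[Proposition 2.4]{BDM} applied without hole. Now Theorem~\ref{thm:spectrum}(ii) and Proposition~\ref{prop:rates} give the convergence on the tower, and projecting back by $\pa_{\pi,t}$ yields
\[
\lim_{n\to\infty}\frac{\f_*^n(\psi\mu_t)}{|\f_*^n(\psi\mu_t)|}=\mu_t^H,
\]
together with $\e(\psi\mu_t)=-\log\lambda_t^H=\e(m_t)$ by the same $L^1$-isometry argument as in (ii).
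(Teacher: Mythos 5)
Your treatment of part (iii) is essentially the paper's argument and is sound, but parts (i) and (ii) contain a genuine gap. You set $\tilde\psi:=\psi\circ\pi$ and then invoke the identity $\pa_{\pi,t}\bigl(\hLp^n_{\phi_{\Delta,t}}\tilde\psi\bigr)=\hLp^n_{\phi_t}\psi$. The actual commuting relation is $\pa_{\pi,t}\bigl(\hLp^n_{\phi_{\Delta,t}}\tilde\psi\bigr)=\hLp^n_{\phi_t}\bigl(\pa_{\pi,t}\tilde\psi\bigr)$, and
\[
\pa_{\pi,t}(\psi\circ\pi)(x)=\sum_{y\in\pi^{-1}x}\frac{\psi(\pi y)}{J_t\pi(y)}=\psi(x)\sum_{y\in\pi^{-1}x}\frac{1}{J_t\pi(y)} \, ,
\]
which equals $\psi$ times $\pa_{\pi,t}(1)$; since $\pi_*\bm_t\neq m_t$ in general, $\pa_{\pi,t}(1)\not\equiv 1$, so $\pa_{\pi,t}(\psi\circ\pi)\neq\psi$. (The paper warns about exactly this in Section~\ref{ssec:convergence}.) Consequently your argument controls the iterates of $\psi\cdot\pa_{\pi,t}(1)$, not of $\psi$, and the same defect invalidates the chain of equalities you use in (ii) to identify $(\psi m_t)(\I^n)$ with $|\hLp^n_{\phi^{\tilde H}_{\Delta,t}}(\psi\circ\pi)|_{L^1(\bm_t)}$.

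The missing ingredient is Lemma~\ref{lem:A2}: one must produce a genuine preimage $\bar\psi\in\B_0$ with $\pa_{\pi,t}\bar\psi=\psi$, and the inclusion $\cont^p(I)\subset\pa_{\pi,t}(\B_0)$ is nontrivial --- it rests on the $t$-dependent analogues of properties (A2)(a)--(c) for the Jacobian $J_t\pi$ (in particular the Lipschitz bound on $J_t\pi_{\ell,j}$, which uses the restriction $\beta\le t_0\log\xi$) together with \cite[Proposition~4.2]{BDM}. Once $\bar\psi$ is in hand, the rest of your argument for (i) and (ii) goes through as written. It is worth noting why your part (iii) nonetheless works: there you multiply a legitimate tower representative $\tg^0_t$ (with $\pa_{\pi,t}\tg^0_t=g^0_t$) by the fiber-constant function $\psi\circ\pi$, and multiplication by fiber-constant functions does commute with $\pa_{\pi,t}$, i.e. $\pa_{\pi,t}\bigl((\psi\circ\pi)\,\tg^0_t\bigr)=\psi\, g^0_t$; this is precisely why the paper handles $\psi g^0_t$ this way, but it does not extend to the bare density $\psi$ in parts (i) and (ii).
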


\begin{proof}
The fact that $\pi_* \tmu^H_t$ defines a conditionally invariant measure with the same
eigenvalue as $\tmu^H_t$ follows from the relation $\pi \circ \f_\Delta = \f \circ \pi$.

(i)  Suppose $\psi \in \mathcal{D}^p(I)$.   
By Lemma~\ref{lem:A2}, we may define $\bar \psi \in \B_0$
such that $\pa_{\pi,t} \bar \psi = \psi$.  Then $\tnu_t(\bar \psi) > 0$ 
since $\psi \ge 0$ and $\psi >0$ on $X$ (indeed, this is trivial since
we may always take $X$ to be among the set of elements specified by (A2) to cover $I$). 
Then, by Proposition~\ref{prop:rates}(i),
\begin{equation}
\label{eq:push}
\frac{\hLp^n_{\phi_t} \psi}{|\hLp^n_{\phi_t} \psi|_1} = \frac{\pa_{\pi,t} \hLp^n_{\phi_{\Delta, t}} \bar \psi}
{|\hLp^n_{\phi_{\Delta,t}} \bar \psi|_1} \xrightarrow[]{n \to \infty} \pa_{\pi,t} \tg^H_t ,
\end{equation}
in the $L^1(m_t)$ norm where we have used the fact that
\[
\left| \frac{\pa_{\pi,t} \hLp^n_{\phi_{\Delta,t}} \tilde \psi}{|\hLp^n_{\phi_{\Delta,t}}|_1} - \pa_{\pi,t} \tg^H_t
\right|_{L^1(m_t)} 
= \left| \pa_{\pi,t} \left( \frac{\hLp^n_{\phi_{\Delta,t}} \tilde \psi}{|\hLp^n_{\phi_{\Delta,t}}|_1} -  \tg^H_t \right)  \right|_{L^1(m_t)} 
=  \left|  \frac{\hLp^n_{\phi_{\Delta,t}} \tilde \psi}{|\hLp^n_{\phi_{\Delta,t}}|_1} -  \tg^H_t   \right|_{L^1(\bm_t)},
\]
and the convergence is at an exponential rate since the $\| \cdot \|_\B$-norm dominates the
$L^1(\bm_t)$ norm and $\hLp_{\phi_{\Delta,t}}$ has a spectral gap on $\B$.

(ii)  This follows from Proposition~\ref{prop:rates}(iii) since 
\[
\int_{\I^n} \psi \, dm_t = \int_{\I} \hLp^n_{\phi_t} \psi \, dm_t 
= \int_{\hDelta}  \hLp^n_{\phi_{\Delta,t}} \bar \psi \, d\bm_t
= \int_{\hDelta^n} \bar \psi \, d\bm_t .
\]

(iii)  We claim that the measure $\mu_t = g^0_t m_t$ can be realized as the projection of an
element in $\B_0$.  
Consider the tower $\Delta$ before the introduction of the hole.  The arguments
of Section~\ref{sec:LY} hold in the case when $H=\emptyset$ so that $\Lp_{\phi_{\Delta, t}}$
has leading eigenvalue 1 with eigenvector $\tilde{g}^0_t \in \B_0$ which defines
an invariant measure $\tilde \mu_t = \tilde g^0_t \bm_t$.  Then $\pi_* \tilde \mu_t = \mu_t$
and $\pa_{\pi,t} \tilde g^0_t = g^0_t$, proving the claim.  Since $\tilde g^0_t > 0$ on $\Delta_0$, 
we have $\tnu_t(\tilde g^0_t)>0$ so that $g^0_t$ is in the class of densities for which
the relations in (i) and (ii) hold by Proposition~\ref{prop:rates} (although it is discontinuous on $I$).

It follows that $\psi g^0_t$ can also be realized as the projection of the element
$\psi \circ \pi \cdot \tilde{g}^0_t \in \B_0$ for any $\psi \in \cont^p(I)$.  If in addition,
$\psi \in \mathcal{D}^p(I)$, then $\tnu_t(\psi \circ \pi \cdot \tilde{g}^0_t)>0$ so that
again, the required limits hold.
\end{proof}

\begin{remark}
\label{remark:conv}
As can be seen from the proof of Proposition~\ref{prop:conv}, the convergence result (i)
and escape rate (ii) hold for any $\psi \in L^1(m_t)$ which can be realized as an element
of $\pa_{\pi,t}(\B_0)$ and satisfies $\psi >0$ on $X$.  In fact, this second condition can be relaxed
to $\nu^H_t(\psi)>0$ once the equilibrium measure $\nu^H_t$
of Theorem~\ref{thm:variational} is introduced.
\end{remark}


\section{Proof of Theorem~\ref{thm:variational}}
\label{sec:var proof}

As verified during the proof of Theorem~\ref{thm:accim}, for $H \in \H(h)$ and
$h$ sufficiently small, 
we have a tower $(f_\Delta, \Delta, \tH)$ respecting the hole, \ie\ such that
$\pi^{-1}H = \tH$ is a union of partition elements $\dlj$, which satisfies
(P1)--(P4) with uniform constants for all $t \in [t_0, t_1]$.

Fix $H \in \H(h)$.
We have an invariant measure
$\tnu_t$ supported on $\hDelta^\infty$ which satisfies the
equilibrium principle of
Proposition~\ref{prop:var tower} and is defined by
$$
\tnu_t (\tilde \psi) = \lim_{n \to \infty} (\lambda^H_t)^{-n} \int_{\hDelta^n} \tilde \psi \tg_t^H \, d\bm_t ,
$$
where $\tg^H_t$ and $\lambda^H_t$ are from Theorem~\ref{thm:spectrum}.

Defining $\nu_t = \pi_* \tnu_t$, we have $\nu_t$ supported on $\I^\infty$ since
$\pi(\hDelta^\infty) \subset \I^\infty$.  Moreover, $\nu_t$ is an invariant measure for
$f$ by the relation $f \circ \pi = \pi \circ f_\Delta$.

For $\psi \in \cont^0(I)$, define $\tilde \psi = \psi \circ \pi$.  Then  
\[
\begin{split}
\nu_t(\psi) & = \tnu_t(\tilde \psi) 
= \lim_{n \to \infty} (\lambda^H_t)^{-n} \int_{\hDelta^n} \tilde \psi \tg^H_t \, d\bm_t 
= \lim_{n \to \infty} (\lambda^H_t)^{-n} \int_{\I^n} 
\pa_{\pi,t}(\psi \circ \pi \cdot \tg^H_t) \, dm_t   \\
& = \lim_{n \to \infty} e^{n\e(m_t)} \int_{\I^n} \psi \,  \pa_{\pi,t}(\tg^H_t) \, dm_t ,
\end{split}
\]
so that $\nu_t$ satisfies the definition of $\nu^H_t$ as defined in the limit given 
in Theorem~\ref{thm:variational}. 

The convergence 
of $\Lp^n_{\phi_t^H} \psi/|\Lp^n_{\phi_t^H} \psi|_{L^1(m_t)}$ (respectively, 
$\Lp^n_{\phi_t^H} (\psi g^0_t)/|\Lp^n_{\phi_t^H} (\psi g^0_t) |_{L^1(m_t)}$) 
to $g^H_t := \pa_{\pi,t}(\tg^H_t)$
for any $\psi \in \cont^p(I)$ with $\nu_t(\psi) >0$ (respectively, $\nu_t(\psi g^0_t) >0$),
follows from Proposition~\ref{prop:rates}, given that both $\psi$ and $\psi g^0_t$ can be realized
as elements of $\pa_{\pi,t}(\B_0)$ as in the proof of Proposition~\ref{prop:conv}.

Finally, we need to show that $\nu_t$ achieves the supremum in the required variational
principle.  We begin by projecting the Variational Principle of 
Proposition~\ref{prop:var tower}
down to $I$.

\begin{lemma}
\label{lem:project jac}
Let $\eta \in \M^H_f$ be such that $\eta$ lifts to $\Delta$ and 
$- \infty < \eta(-\vf) < \infty$.  
Let $\teta \in \M^H_{f_\Delta}$ denote the lift of $\eta$ to $\Delta$.  
Then $\int_I \log Jf \, d\eta = \int_\Delta\log Jf_\Delta \, d\teta$, where
$Jf = |Df|$ and $Jf_\Delta$ is the Jacobian of $f_\Delta$ with respect to 
$\bm_1$.
\end{lemma}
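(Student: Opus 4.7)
The plan is to reduce both integrals to the same expression on the base $\Delta_0$, using a telescoping identity for $\log Jf_\Delta$ along excursions and the tower version of Abramov's formula recalled after Kac's Lemma in Section~\ref{ssec:lifting to Delta}.

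First I would establish the pointwise identity
\[
\sum_{k=0}^{\tau(x)-1}\log Jf_\Delta(f_\Delta^k x)\;=\;\log|Df^{\tau(x)}|(\pi x)\qquad\text{for }x\in\Delta_0.
\]
This is immediate from the construction of $\bm_1$: for $0\le k<\tau(x)-1$ the map $f_\Delta$ sends $\Delta_{k,j}$ to $\Delta_{k+1,j}$ by the upward shift, and the defining relation $\bm_1|_{\Delta_{\ell+1}}=(f_\Delta)_*\bm_1|_{\Delta_\ell\cap f_\Delta^{-1}\Delta_{\ell+1}}$ forces $Jf_\Delta\equiv 1$ on these steps; at the return step $f_\Delta^{\tau(x)-1}x\mapsto F(x)$, and since $\pi|_{\Delta_0}=\mathrm{id}$ and $F=f^\tau$ via $\pi$, the Jacobian there is exactly $|DF(x)|=|Df^{\tau(x)}|(\pi x)$.

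Next I would apply Abramov to both $\psi_1:=\log Jf_\Delta$ and $\psi_2:=\log|Df|\circ\pi$, with induced base measure $\nu:=\teta|_{\Delta_0}/\teta(\Delta_0)$. The telescoping identity says the induced potential of $\psi_1$ on $\Delta_0$ is $\log|Df^\tau|\circ\pi$, while the commutation $\pi\circ f_\Delta^k=f^k\circ\pi$ on $\{\tau>k\}\cap\Delta_0$ shows that the induced potential of $\psi_2$ is the same function. Hence
\[
\int_\Delta \log Jf_\Delta\,d\teta \;=\; \frac{\int_{\Delta_0}\log|Df^\tau|\circ\pi\,d\nu}{\int\tau\,d\nu} \;=\; \int_\Delta \log|Df|(\pi x)\,d\teta(x),
\]
and because $\teta$ lifts $\eta$, $\pi_*\teta=\eta$, so the right-hand integral equals $\int_I\log|Df|\,d\eta=\int_I\log Jf\,d\eta$, which closes the argument.

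The delicate point, and where I would be most careful, is justifying Abramov in the required form, i.e., $\nu$-integrability of the relevant induced potentials. For $\psi_2$, the hypothesis $-\infty<\eta(-\vf)<\infty$ gives $\log|Df|\in L^1(\eta)$, hence $\psi_2\in L^1(\teta)$ via $\pi_*\teta=\eta$, and Abramov then yields $\nu$-integrability of the induced potential. For $\psi_1$, integrability is more direct: $\psi_1$ vanishes off return steps, and at return steps $\log|DF|$ is sandwiched between $-\log C_d'$ and $\tau\log|Df|_\infty$ by (A1)(a) and the boundedness of $|Df|$, so $\int\log|DF|\,d\nu\le\log|Df|_\infty\int\tau\,d\nu<\infty$ by Kac. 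Once this integrability check is in hand, the two Abramov identities coincide and the lemma follows.
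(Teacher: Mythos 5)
Your proof is correct, and it takes a genuinely different route from the paper's. The paper starts from the chain-rule identity $J_1\pi(f_\Delta x)\,Jf_\Delta(x)=Jf(\pi x)\,J_1\pi(x)$, so the lemma reduces to showing that the coboundary $\log J_1\pi\circ f_\Delta-\log J_1\pi$ integrates to zero against $\teta$; since $\log J_1\pi$ need not be $\teta$-integrable, this cannot be dispatched by invariance alone, and the paper instead computes the coboundary explicitly on return and non-return steps and matches, column by column, the top-level term $S_\ell\vf(\pi(x_{-\ell}))$ against the sum of $-\vf\circ\pi$ over the lower levels, using $\teta(f_\Delta^\ell X_i)=\teta(X_i)$. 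You bypass $J_1\pi$ entirely: you observe that $\log Jf_\Delta$ and $\log Jf\circ\pi$ have the \emph{same} induced potential $\log|Df^\tau|$ on $\Delta_0$ (your telescoping identity is just the statement that $\bm_1$ is $\vf_\Delta$-conformal for the potential \eqref{eq:Delta potential}), and then apply the integral form of Abramov's formula to each. The underlying cancellation is the same column-by-column bookkeeping, but your organization localizes the delicate integrability issue differently: the paper must argue that the two infinite pieces of a non-integrable coboundary cancel, whereas you only need $L^1(\nu)$ of the common induced potential, which you correctly extract from (A1)(a) (lower bound $-\log C_d'$), the bound $|Df|\le|Df|_\infty$, Kac's lemma, and the hypothesis $\eta(-\vf)>-\infty$. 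Your version is arguably cleaner and makes the role of the hypothesis on $\eta(\vf)$ more transparent; the paper's version has the advantage of exhibiting the pointwise cocycle relation between the three Jacobians, which is reused implicitly elsewhere.
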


\begin{proof}
Due to the
relation $\pi \circ f_\Delta = f \circ \pi$, we have  for $x \in \Delta$,
\[
J_1\pi(f_\Delta x) Jf_\Delta(x) = Jf(\pi x) J_1\pi(x) .
\]
Thus since $\pi_* \teta = \eta$,
\[
\int_I \log Jf \, d\eta = \int_\Delta \log Jf \circ \pi \, d\teta 
= \int_{\Delta} (\log Jf_\Delta + \log J_1\pi \circ f_\Delta - \log J_1\pi) \, d\teta .
\]
We claim that $\int_{\Delta} (\log J_1 \pi \circ f_\Delta - \log J_1 \pi) \, d\teta = 0$.
Note that if $\int_{\Delta} \log J_1\pi \, d\teta$ were finite, this would be trivial
by the invariance of $\teta$, but we do not assume the finiteness of this integral.

We consider two cases.  If $x \in \Delta_\ell \cap f^{-1}_\Delta(\Delta_0)$, then
$J_1\pi(f_\Delta x) = 1$.  Setting $x_{-\ell} = f_\Delta^{-\ell}x$ as before 
and using \eqref{eq:J_1}, we obtain
\[
\log J_1\pi(f_\Delta x) - \log J_1\pi(x) = S_\ell \vf(\pi (x_{-\ell})).
\]
On the other hand, if $x \in \Delta_\ell \sm f_\Delta^{-1}(\Delta_0)$, then
again by \eqref{eq:J_1},
\[
\log J_1\pi(f_\Delta x) - \log J_1\pi(x) = S_{\ell} \vf (\pi (x_{-\ell})) -
S_{\ell+1} \vf(\pi (x_{-\ell})) = - \vf(\pi x).
\]
Putting these two cases together, we have
\[
\int_{\Delta} (\log J_1 \pi \circ f_\Delta - \log J_1 \pi) \, d\teta
= \int_{f_\Delta^{-1}(\Delta_0)} S_\ell \vf(\pi (x_{-\ell})) \, d\teta 
- \int_{\Delta \sm f_\Delta^{-1}(\Delta_0)} \vf(\pi x) \, d\teta .
\]
Both integrals are finite by assumption on $\teta$.  We decompose $\Delta$ into columns 
$\{ f_\Delta^\ell(X_i) \}_{\ell < \tau(X_i)}$ and note that the first integral
considers $S_\ell \vf(\pi(x_{-\ell}))$ in the top element of the column while the 
second considers the sum of $\vf \circ \pi$ 
in all the levels below the top one, which is precisely
the same thing.  This, plus the fact that $\teta(f_\Delta^\ell(X_i)) = \teta(X_i)$
for $\ell < \tau(X_i)$ provides the required cancellation.
\end{proof}

Taking $\eta \in \M^H_f$ with $-\infty < \eta(-\vf) < \infty$, we use
Lemma~\ref{lem:project jac} to write 
$\eta(\phi_t) = \teta(\phi_{\Delta,t})$ for each $t$
since $\eta(-\phi_t) < \infty$ if and only if $\eta(- \vf)< \infty$. 

Moreover, $h_{\teta}(f_\Delta) = h_\eta(f)$ since $\pi$ is at most countable-to-one
\cite[Proposition 2.8]{buzzi}.  Putting these together yields by 
Proposition~\ref{prop:var tower},
\begin{align*}
-\e(m_t) = \log \lambda^H_t = & \sup_{\eta \in \M^H_f}  \left\{ h_\eta(f) + t \int_I \vf \, d\eta  :
\eta \mbox{ lifts to } \Delta \right\}- P_{\M_f}(t \vf).\\
\end{align*}

However, the condition ``$\eta$ lifts to $\Delta$"  does not suffice to prove 
Theorem~\ref{thm:variational} since that condition is not well understood and depends
on the inducing scheme.  In order to replace the above class of measures with
the class $\G^H_f$ which is independent of the inducing scheme, we must prove
two things: 
\begin{enumerate}
\item[(i)]  $\nu_t \in \G^H_f$; and 
\item[(ii)] $-\e(m_t) \ge P_{\G^H_f}(t\vf) - P_{\M_f} (t\vf)$.
\end{enumerate}
Proving (i) will imply $-\e(m_t) \le  P_{\G^H_f}(t\vf) - P_{\M_f} (t\vf)$ since
we know $\nu_t$ lifts to $\Delta$ and $\nu_t(-\phi_t) < \infty$ by Lemma~\ref{lem:nu finite}
and \eqref{eq:first int}.  Then (ii) will yield the required equality.
We proceed to prove these points in the next two subsections.


\subsection{The weight near the boundary of the hole}

In this section, we prove the following proposition.

\begin{proposition} 
\label{prop:G^H_f}
There exist $C, r>0$ such that
$\nu_t(N_\ve(\partial H \cup \Crit_c)) \le C\ve^r$ for all $\ve>0$,
where $N_\ve(\cdot)$ denotes the $\epsilon$-neighborhood of a set.
\end{proposition}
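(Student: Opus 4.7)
The plan is to bound $\nu_t(B_\ve(z))$ for each $z$ in the finite set $\partial H \cup \Crit_c$ and then sum. Writing $\nu_t = \pi_*\tnu_t$ and using the $f_\Delta$-invariance of $\tnu_t$ together with the decomposition of $\Delta$ into columns $\{\Delta_{\ell,j}\}_{\ell<\tau_j}$, I obtain
\[
\nu_t(B_\ve(z)) = \sum_{j}\sum_{\ell=0}^{\tau_j-1} \tnu_t\!\left(E_{\ell,j}^\ve\right),
\qquad
E_{\ell,j}^\ve := \{x\in X_j : f^\ell(x)\in B_\ve(z)\}.
\]
The Gibbs estimate of Lemma~\ref{lem:nu bounds} combined with the bounded distortion of $S_\ell\phi_t$ on $X_j$ (Lemma~\ref{lem:distortion} and (A1)) yields
\[
\tnu_t(E_{\ell,j}^\ve) \;\le\; C\,(\lambda^H_t)^{-\tau_j}\, \bm_t(X_j)\,\frac{m_t(B_\ve(z)\cap f^\ell(X_j))}{m_t(f^\ell(X_j))}.
\]

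The first key input is a uniform conformal bound $m_t(B_\ve(z))\le C\ve^{r_0}$ for some $r_0>0$, uniformly over $t\in[\bar t_0,\bar t_1]$ and $z\in\partial H\cup\Crit_c$. For $z\in\partial H$ this follows from $\phi_t$-conformality combined with a standard bootstrap: iterate $B_\ve(z)$ forward on a free piece until it has macroscopic size and use $m_t(B_\ve(z)) \approx e^{-S_k\phi_t(y)}m_t(f^k(B_\ve(z)))$. For $z=c\in\Crit_c$, non-flatness gives $f(B_\ve(c))\subset B_{C\ve^{\ell_c}}(f(c))$, and condition (C2) ensures that $f(c)$ is bounded away from critical points, reducing the problem to the previous case at the cost of a factor that depends on $\ell_c$ and $t$.

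The second input is a matching lower bound on $m_t(f^\ell(X_j))$ for ``most'' pairs $(\ell,j)$. Splitting the sum over $\ell$ into contributions from free pieces and bound pieces (in the sense of the inducing construction of Section~\ref{sec:tails}), the free contribution has $m_t(f^\ell(X_j))$ bounded below by a uniform constant, so its total contribution is at most $C\ve^{r_0}\sum_j(\lambda^H_t)^{-\tau_j}\bm_t(X_j)$, which is finite by the Gibbs characterization of $\tnu_t$ (the sum equals $\tnu_t(\Delta_0)$ up to constants). The main obstacle is the bound contribution: when $X_j$ is in a bound period at time $\ell$ following entry to $B_{\delta_0}(c')$ at depth $r$, the image $f^\ell(X_j)$ has size comparable to $e^{-r}$ and can only meet $B_\ve(z)$ when the corresponding critical orbit is within $\ve + Ce^{-r}$ of $z$, which by (C2) and (H1) forces $e^{-\vartheta_{c'}\ell}\lesssim \ve$, i.e.\ $\ell\ge c_1\log(1/\ve)$. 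Summing geometrically over such $\ell$ via the exponential tails of Proposition~\ref{prop:uniform tails} (after accounting for the $(\lambda^H_t)^{-\tau_j}$ factor, which stays controlled since $\lambda^H_t$ is close to $1$ for $H\in\H(h)$ small) produces a bound $C\ve^{r_1}$ for some $r_1>0$.

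Combining the free and bound estimates gives $\nu_t(B_\ve(z))\le C\ve^{\min(r_0,r_1)}$, and summing over the finitely many $z\in\partial H\cup\Crit_c$ yields the proposition with $r=\min(r_0,r_1)$. The technically delicate step is the bound-period analysis, since there one must translate the slow-recurrence estimates (C2), (H1) of the \emph{critical} orbits into a quantitative statement about how close the \emph{dynamically induced intervals} $f^\ell(X_j)$ can come to a prescribed point $z$; this is precisely where the choice of $h$ sufficiently small in $\H(h)$ will be used.
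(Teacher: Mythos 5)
Your treatment of the bound pieces is essentially the paper's: slow recurrence (C2)/(H1) forces $\ell \gtrsim \log(1/\ve)$, and the exponential tail together with $\lambda_t^{-\tau}\le e^{\beta\tau}$, $\beta<\alpha$, closes that case. But the rest of the argument has two genuine gaps.

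First, the inequality $\tnu_t(E_{\ell,j}^\ve) \le C(\lambda^H_t)^{-\tau_j}\bm_t(X_j)\,m_t(B_\ve(z)\cap f^\ell(X_j))/m_t(f^\ell(X_j))$ does not follow from Lemma~\ref{lem:nu bounds} plus distortion. The measure $\tnu_t$ is supported on the survivor set and is \emph{singular} with respect to $m_t$; the Gibbs property controls it only on cylinder sets, and the weight $\lambda_t^{-\tau^n}e^{S_{\tau^n}\phi_t}$ carries an extra factor $\lambda_t^{-\tau^n}$ that grows with the depth of the cylinder. To estimate $\tnu_t$ of the interval $E_{\ell,j}^\ve$ you must cover it by $n$-cylinders whose depth depends on $\ve$, and then you must control the total return times $\tau^n$ of those cylinders. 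This is exactly what the paper's proof is organized around: it chooses $n_0(\ve)\sim\log(1/\ve)$, takes cylinders of \emph{minimal} index intersecting $N_\ve(B)$ so that the parent $(n-1)$-cylinder has projection of length at least $\ve$, and deduces $\tau^{n-1}(Z_n)-\ell \le -\log(C_0\ve/\delta_1)/\log\xi$, which is what tames the $\lambda_t^{-\tau^n}$ (and $e^{b_t\tau^n}$) factors. Your one-cylinder, column-by-column decomposition has no mechanism for this.

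Second, the claim that for free $\ell$ the image $m_t(f^\ell(X_j))$ is bounded below by a uniform constant is false: being free does not mean having macroscopic length, and $f^\ell(X_j)$ only reaches definite size at the return time. Consequently many free pieces can be entirely contained in $B_\ve(z)$, your ratio degenerates to $1$, and the free contribution is not controlled by $\ve^{r_0}$. The correct substitute, which the paper uses, is the estimate from \cite[Sect.~4.3]{BDM} that the total Lebesgue measure of free pieces projecting into an interval of length $\ve$ is $O(\ve)$, combined with a further splitting according to whether $\tau(Z_n)+\tau(F^{n-1}Z_n)$ exceeds $-\eta\log\ve$ and a constraint on $c_t=-\log\lambda_t+(1-t)\chi_M-p_t$ relative to $\alpha_1$ and $\log\xi$. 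Without these ingredients the free-piece estimate does not close.
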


\begin{proof}
Denote by $\Z_n$ the partition of $\Delta_0$ into $n$-cylinders for $F = f^\tau$.
Recall that $\nu_t = \pi_* \tnu_t$ and that $\tnu_0 := (\tnu_t(\Delta_0))^{-1} \tnu_t|_{\Delta_0}$
is a Gibbs measure for $\hF$ by Lemma~\ref{lem:nu bounds} 
which satisfies
\begin{equation}
\label{eq:gibbs}
C^{-1} \lambda_t^{-\tau^n(y_*)} e^{S_{\tau^n}\phi_{\Delta,t}(y_*)} \leq \tnu_0(Z_n)
\leq C \lambda_t^{-\tau^n(y_*)} e^{S_{\tau^n}\phi_{\Delta,t}(y_*)}
\end{equation}
for any $Z_n \in \Z_n$ and $y^* \in Z_n$.  Here $\lambda_t = \lambda^H_t$;
we have retained the explicit dependence
on $t$, but have suppressed dependence on $H$.

Fix $\ve >0$ and choose $n_0 \in \N$ to be the minimal $n$ such that 
\[
\sup_{Z_n \in \Z_n} \sup_{\ell < \tau(Z_n)} m_1(\pi(f_\Delta^\ell Z_n)) < \ve ,
\]
where $m_1$ denotes Lebesgue measure as usual.
Note that such an $n_0$ exists due to the fact that there is exponential expansion
at return times by property (A1)(a).  Indeed, let $Z'_{n-1} = F(Z_n)$ be the $n-1$
cylinder mapped to by $Z_n$.  Then $m_1(\pi (f_\Delta^\ell Z_n)) \le m_1(\pi Z'_{n-1})
\le C_0^{-1} \xi^{-\tau^{n-1}(Z'_{n-1})} m_1(\Delta_0)$, for each $\ell < \tau(Z_n)$.
Thus $m_1(\pi (f_\Delta^\ell Z_n)) \le \ve$ whenever
\[
\tau^{n-1}(Z'_{n-1}) > \frac{- \log (C_0\ve/\delta_1)}{\log \xi} .
\]
Since $\tau_n \geq \tau_{\min} n$, where $\tau_{\min}$ denotes the minimum return time,
it suffices to choose
\begin{equation}
\label{eq:n0}
n_0 > 1 + \frac{- \log (C_0\ve/\delta_1)}{\tau_{\min} \log \xi} .
\end{equation}

For brevity, set $B = \partial H \cup \Crit_c$, the singularity set.
Let $\mathfrak{C}_\ve$ denote the collection of $n$-cylinders $Z_n$ of minimal index $n \le n_0$
such that $\pi(f_\Delta^\ell Z_n) \cap N_\ve(B) \neq \emptyset$ for some $\ell < \tau(Z_n)$.  
By minimal index,
we mean that if $Z_n$ is contained in an $(n-1)$-cylinder $Z_{n-1}$ such that
$\pi(f_\Delta^\ell Z_{n-1}) \subset N_{2\ve}(B)$, then we omit $Z_n$ from
$\mathfrak{C}_\ve$ and include $Z_{n-1}$ instead.
For each $n \le n_0$, define $\mathfrak{C}_{\ve,n}$ to be the set of 
$n$-cylinders in $\mathfrak{C}_\ve$.

Note that 
\[
\nu_t(N_\ve(B)) \le \sum_{Z_n \in \mathfrak{C}_\ve} \sum_{\mbox{\scriptsize relevant $\ell$}}
\tnu_t(f_\Delta^\ell Z_n) .
\]

For $Z_n \in \mathfrak{C}_{\ve}$, there are two possibilities when 
$\pi(f^\ell_\Delta Z_n) \cap N_{\ve}(B) \neq \emptyset$:  either the interval
$\pi(f^\ell_\Delta Z_n)$ is free or it
is bound.  If it is bound at time $\ell$ due to passing through $B_{\delta_0}(\Crit)$ at 
time $\ell - k$, we have
$|f^k x - f^k c| \le \delta_0 e^{-2 \vartheta_c k}$ for all $x \in \pi(f_\Delta^{\ell - k} Z_n)$
and some $c \in \Crit_c$ 
by \cite[Sect. 2.2]{DHL}, where $\vartheta_c$ is from (C2).  This implies that
 dist$(f^k c, B) \le \ve + \delta_0 e^{-2 \vartheta_c k}$.
On the other hand,
the slow approach conditions (C2) and (H1) imply that
dist$(f^k c, B) \geq \delta_0 e^{-\vartheta_c k}$.  Putting these two conditions together,
we must have
\[
\delta_0 e^{-2 \vartheta_c k} - \delta_0 e^{-\vartheta_c k} + \ve \geq 0,
\]
which admits two possibilities:  either
\[
e^{-\vartheta_c k} < \frac{1- \sqrt{1- \frac{4 \ve}{\delta_0}}}{2} \qquad 
\mbox{or} \qquad
e^{-\vartheta_c k} > \frac{1+ \sqrt{1- \frac{4 \ve}{\delta_0}}}{2} .
\]
Since $k \geq 1$ by (H1), we may eliminate the second possibility by only considering
$\ve$ sufficiently small that $e^{-\vartheta_c } < \frac{1+ \sqrt{1- \frac{4 \ve}{\delta_0}}}{2}$.
For the first possibility to occur, we estimate $\sqrt{1-x} \ge 1-x$, for $0 \le x \le 1$,
and solve for $k$ to obtain
the requirement
\begin{equation}
\label{eq:bound large}
k > \frac{-\log(2\ve/\delta_0)}{\vartheta_c} .
\end{equation}
Thus we must have $\tau(Z_n) > \frac{-\log(2\ve/\delta_0)}{\vartheta_c} := s$ if $Z_n$ is to intersect
$N_\ve(B)$ during a bound period.  For cylinders with large return times, we can make a simple estimate using \eqref{eq:pot bound}, \eqref{eq:gibbs} and 
Proposition~\ref{prop:uniform tails},  
\[
\begin{split}
\sum_{\stackrel{Z_n \in \mathfrak{C}_\ve}{\tau(Z_n) > s} } 
\sum_{\mbox{\scriptsize relevant $\ell$}}
\tnu_t(f^\ell_\Delta Z_n)
& \le \sum_{\stackrel{Z_1 \in \Z_1}{\tau(Z_1) > s} } \tau(Z_1) \tnu_t(Z_1)
\le \sum_{\stackrel{Z_1 \in \Z_1}{\tau(Z_1) > s} } \tau(Z_1) \lambda_t^{-\tau}  e^{S_\tau \phi_t(Z_1)} \\
& \le \sum_{\tau > s}  \tau e^{\beta \tau} C_0 e^{-\alpha \tau}
\le C' s e^{(\beta - \alpha)s} \le C'' \ve^{(\alpha - \beta)/\vartheta_c} \log \ve .
\end{split}
\]
where we have used the fact that  $\lambda_t^{-1}  \le e^\beta$ and $\beta < \alpha$.
  
It remains to estimate the contribution from cylinders  with $\tau < s$.  Notice that by
\eqref{eq:bound large}, all of these contributions are from pieces that are free at the time
they intersect $N_\ve(B)$.
We fix $n \leq n_0$ and estimate the contributions from
one $\mathfrak{C}_{\ve,n}$ at a time.  We also fix $t \in [t_0, 1]$.  The argument for
$t \in [1, t_1]$ is similar.

Notice that by definition of $\mathfrak{C}_{\ve,n}$, if $Z_n \in \mathfrak{C}_{\ve,n}$, then 
$\pi(f^\ell_\Delta Z_n) \cap N_\ve(B) \neq 0$ for some (possibly more than one) 
$\ell < \tau(Z_n)$; but $\pi(f^\ell_\Delta Z_{n-1}) \not \subset N_{2 \ve}(B)$ where
$Z_{n-1}$ is the $(n-1)$-cylinder containing $Z_n$.  This implies that
$|\pi(f_\Delta^\ell Z_{n-1})| \geq \ve$.  Then since
$f_\Delta^{\tau^{n-1}(Z_n)}(Z_{n-1}) = \Delta_0$, we have by (A1),
\begin{equation}
\label{eq:small return}
C_0 \xi^{\tau^{n-1}(Z_n) - \ell} |\pi(f_\Delta^\ell Z_{n-1})| \le m_1(\Delta_0)
\implies \tau^{n-1}(Z_n) - \ell \le \frac{- \log(C_0 \ve/\delta_1)}{\log \xi}.
\end{equation}

Since we are restricting to $\tau(Z_n) < s$, there are at most $s$ values of $\ell$
such that $\pi(f_\Delta^\ell Z_n) \cap N_\ve(B) \neq \emptyset$, so
\[
\sum_{\stackrel{Z_n \in \mathfrak{C}_{\ve, n}}{\tau(Z_n) < s}} 
\sum_{\mbox{\scriptsize relevant $\ell$}} \tnu_t(f_\Delta^\ell Z_n) 
\le s \sum_{Z_n \in \mathfrak{C}_{\ve,n}} \tnu_t(Z_n)
\]
and since $s \approx \log \ve$, it suffices to estimate the sum above.

Since $- S_{\tau^n}\vf \le \tau^n \chi_M$, we have,
\begin{equation}
\label{eq:Stau bound}
S_{\tau^n}\phi_{\Delta,t} = t S_{\tau^n} \vf - \tau^n p_t
\le [(1-t)\chi_M - p_t] \tau^n + S_{\tau^n} \vf .
\end{equation}

Setting $c_t = - \log \lambda_t + (1-t) \chi_M - p_t$ for brevity and using \eqref{eq:Stau bound}
together with \eqref{eq:gibbs} and \eqref{eq:small return}, we obtain
\begin{equation}
\label{eq:pull out}
\begin{split}
& \sum_{Z_n \in \mathfrak{C}_{\ve,n}} \tnu_t(Z_n) 
\le \sum_{Z_n \in \mathfrak{C}_{\ve,n}} C \lambda_t^{-\tau^n(Z_n)} e^{S_{\tau^n(Z_n)} \phi_{\Delta,t}}\\
& \le \sum_{Z_n \in \mathfrak{C}_{\ve,n}} C \lambda_t^{-\tau^n(Z_n)} e^{[(1-t)\chi_M - p_t] \tau^n(Z_n)} m_1(Z_n) \\
& \le Ce^{c_t (\tau^{n-1}(Z_n) - \tau(Z_n))} 
\sum_{Z_n \in \mathfrak{C}_{\ve,n}}  e^{c_t (\tau(Z_n) + \tau(F^{n-1}Z_n))} m_1(Z_n) \\
& \le C \ve^{-c_t/\log \xi}
\sum_{Z_n \in \mathfrak{C}_{\ve,n}}  e^{c_t (\tau(Z_n) + \tau(F^{n-1}Z_n))} m_1(Z_n) .
\end{split}
\end{equation}

We split the sum up according to whether
$\tau(Z_n) + \tau(F^{n-1}Z_n)$ is larger or smaller than $-\eta \log \ve$ for some $\eta >0$
to be chosen later.
Note that due to bounded distortion and the tail estimate, we have
$m_1(x \in \Delta_0 : \tau(x) + \tau(F^{n-1}x) = k) \le C e^{- \alpha_1 k}$.  Thus for pieces with
large return times, we have
\begin{equation}
\label{eq:large}
\begin{split}
\sum_{\tau + \tau \circ F^{n-1} > - \eta \log \ve} & e^{c_t (\tau(Z_n) + \tau(F^{n-1}Z_n))} m_1(Z_n)  \\
& \le C (e^{c_t - \alpha_1}) ^{- \eta \log \ve}
\le C \ve^{\eta( \alpha_1 - c_t )} .
\end{split}
\end{equation}
For pieces with small return times, we have
\begin{equation}
\label{eq:small}
\begin{split}
\sum_{\tau + \tau \circ F^{n-1} < - \eta \log \ve} & e^{c_t (\tau(Z_n) + \tau(F^{n-1}Z_n))} m_1(Z_n)  \\
& \le C (e^{c_t}) ^{- \eta \log \ve}
\sum_{\stackrel{Z_n \mbox{ \Small free at time } \ell}{Z_n \in \mathfrak{C}_{\ve,n}}} m_1(f_\Delta^\ell Z_n) \\
& \le C \ve^{- \eta c_t} \ve.
\end{split}
\end{equation}
where we have used the fact that the Lebesgue measure of free pieces that project into an interval of length $\ve$ is bounded by const.$\ve$ (see \cite[Sect. 4.3, Step 1]{BDM}).

In order to prove our estimate, we need the powers of $\ve$ in both \eqref{eq:large}
and \eqref{eq:small} to be positive after multiplying by the factor $\ve^{-c_t/\log \xi}$
appearing in \eqref{eq:pull out}.  Thus we need,
\[
\frac{c_t}{( \alpha_1 - c_t) \log \xi}
< \eta <
\frac{\log \xi - c_t}{c_t \log \xi } .
\]
Such an $\eta$ always exists as long as
\[
c_t = - \log \lambda_t + (1-t) \chi_M - p_t < \frac{\alpha_1 \log \xi}{\alpha_1 + \log \xi},
\]
which holds for all small holes and for all $t$ close to 1 since $p_1=0$ and $\lambda_t \to 1$
as $H$ becomes small.

We have estimated that the contribution to $\nu_t(N_\ve(B))$ from pieces in each $\mathfrak{C}_{\ve,n}$
satisfies the desired bound.  Since there are at most  $n_0$ sets 
$\mathfrak{C}_{\ve,n}$ in $\mathfrak{C}_\ve$,
and $n_0 \le - C \log \ve$ by \eqref{eq:n0}, 
summing over $n$ adds only a logarithmic factor
to our estimate, completing the proof that $\nu_t \in \G^H_f$.
\end{proof}


\subsection{Volume estimate}
\label{sec:volume}

In this section, we will prove the following proposition, which then completes the proof of Theorem~\ref{thm:variational}. 

\begin{proposition}
For each $t \in [t_0, t_1]$,
$$-\e(m_t) \ge \sup_{\mu\in \mathcal{G}_{f}^H} \left\{h(\mu)+ t \int\varphi~d\mu
\right\} 
- P_{\M_f} (t\varphi).$$
\label{prop:lower bound new}
\end{proposition}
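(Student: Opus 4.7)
The strategy is to reduce the supremum over $\G_f^H$ to the tower-level variational principle already proved in Proposition~\ref{prop:var tower}, and then use Theorem~\ref{thm:lift}(d) to eliminate measures that fail to lift. First I would identify the two formulations of pressure. Applying Proposition~\ref{prop:var tower} with $\psi = \tg_t^H$, together with the bijective correspondence between $f_\Delta$-invariant probability measures on $\hDelta^\infty$ and $f$-invariant measures in $\M_f^H$ that lift to $\Delta$ (the projection formula \eqref{eq:proj meas}, Abramov's formula, and the entropy preservation $h_{\teta}(f_\Delta) = h_\eta(f)$ used in Section~\ref{sec:var proof}), one obtains
\[
-\e(m_t) + P_{\M_f}(t\vf) \;=\; P_\Delta(\vf_t^H).
\]
This is essentially the identification already carried out in the lines preceding this proposition, so no new work is required for Step~1.

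Next, fix any $\mu \in \G_f^H$. Since $\G_f^H \subset \M_f^H$, invariance forces $\mu$ to be supported on $\I^\infty$ and $\mu(H) = 0$, so that $\int \vf_t^H\,d\mu = t\int \vf\,d\mu$. Finiteness of the latter follows from Przytycki's bound recalled in the remark at the end of Section~\ref{ssec:lifting to Delta}. Now split into two cases:
\begin{itemize}
\item If $\mu$ lifts to $\Delta$, then by the very definition of $P_\Delta$ we have
$h(\mu) + t\int \vf\,d\mu \le P_\Delta(\vf_t^H).$
\item If $\mu$ does not lift, Theorem~\ref{thm:lift}(d) applies and gives the (even stronger, strict) inequality
$h(\mu) + t\int \vf\,d\mu < P_\Delta(\vf_t^H).$
\end{itemize}
In either case, combining with Step~1 yields
\[
h(\mu) + t\int \vf\,d\mu \;\le\; -\e(m_t) + P_{\M_f}(t\vf).
\]
Taking the supremum over $\mu \in \G_f^H$ and rearranging gives the desired lower bound.

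The proof is short because the heavy lifting has already been done: the tower-level variational principle in Section~\ref{sec:var delta}, the lift/project dictionary in Section~\ref{sec:var proof} via Lemma~\ref{lem:project jac}, and crucially Theorem~\ref{thm:lift}(d), which converts what would be a delicate ``almost every invariant measure lifts'' argument into a clean pressure comparison. The only mildly subtle point is checking that Theorem~\ref{thm:lift}(d) genuinely applies to arbitrary $\mu \in \G_f^H$, but that statement was proved for any measure on $\I^\infty$ with $t$ in the relevant range, so no extra hypothesis on $\mu$ is required. Hence there is no real obstacle to overcome here; the work consisted of establishing the ingredients in earlier sections.
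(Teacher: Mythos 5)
Your argument takes a genuinely different route from the paper's: the paper never invokes Theorem~\ref{thm:lift}(d) in the proof of this proposition. Instead it proves the inequality by a direct volume estimate: for each $\mu\in\G_f^H$ it builds an adapted partition subordinate to $\tilde\P_n$ via the natural extension (Theorem~\ref{thm:dobbs}), uses Lemma~\ref{lem:avoid holes} to control how fast $\mu$-typical orbits approach $\bd H$, bounds $m_t(\tilde\cyl_n[x])$ from below by conformality and the Mean Value Theorem, and counts cylinders with the Shannon--McMillan--Breiman theorem. That argument makes essential use of the defining property of $\G_f^H$ (the bound $\mu(B_\ve(\bd H))\le C\ve^\beta$, which is exactly what Lemma~\ref{lem:avoid holes} needs) and avoids the question of whether $\mu$ lifts altogether. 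Your Step~1, identifying $-\e(m_t)+P_{\M_f}(t\vf)$ with $P_\Delta(\vf_t^H)$, is indeed already carried out in the lines preceding the proposition and is fine.

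The concern is precisely the point you dismiss as ``mildly subtle.'' Your proof never uses the $\G_f^H$ condition, so if it were complete it would establish the inequality over all of $\M_f^H$ --- a strictly stronger statement than the paper claims and one the authors deliberately avoid. All of the weight falls on Theorem~\ref{thm:lift}(d), and the proof of (d) given in the paper only covers non-liftable measures that are forced to satisfy $\dim_H(\mu)=h(\mu)/\chi(\mu)<D$ because they charge the low-dimensional set $\nolift_\Delta$. It does not address the other mechanisms by which a measure can fail to lift: $\mu(X)=0$ (for instance a measure carried by an invariant hyperbolic set disjoint from $X$, whose dimension can exceed $D$), or $\tau<\infty$ $\mu$-a.e.\ on $X$ while $\int_X\tau\,d\mu=\infty$. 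These are exactly the cases the authors have in mind when they write that the lifting condition ``is not well understood and depends on the inducing scheme,'' and they are the reason the paper replaces it with an independent volume estimate. So while your deduction is formally consistent with the statement of (d), it does not give a self-contained proof of the proposition: it inherits, and amplifies, the gap in (d). To make your approach rigorous you would still need to prove $h(\mu)+t\int\vf\,d\mu\le P_\Delta(\vf_t^H)$ directly for every non-liftable $\mu\in\G_f^H$, and the natural way to do that is the volume estimate the paper actually gives.
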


We will estimate the $m_t$-mass of $\I^n$ in terms of the pressure using the following partitions.
Define $\P_1$ to be the partition of $I$ into open intervals whose endpoints
are elements of $\Crit_c$ and let 
$\P_n := \bigvee_{k=0}^{n-1}f^{-k}\P_1$.  Similarly, let $\tilde \P_1$ denote the partition of
$I$ induced by $\Crit_c \cup \partial H$ and define $\tilde \P_n$ analogously.
We will estimate the mass of the elements of $\tilde\P_n$ in terms of Lyapunov exponents, and the number these cylinder sets in terms of the entropy.  To get the estimate on the mass we construct another partition using the method of \cite[Section 4]{Dob08}.   Note that this follows a very similar construction given in \cite[Section 2]{Led81}; see also \cite[Theorem 11.2.3]{PrzUrb10}
and \cite[Section 3]{DWY2}.  

We define the natural extension as in  \cite{Led81}.  First define
$$Y:=\{y=(y_0,y_1,\dots):f(y_{i+1}) =y_i\in I\}.$$
Define ${\bar f}^{-1}:Y\to Y$ by ${\bar f}^{-1}((y_0,y_1, \ldots))=(y_1, y_2, \ldots)$, so that ${\bar f}^{-1}$ is invertible with inverse ${\bar f}:{\bar f}^{-1}Y\to Y$ given by ${\bar f}((y_0,y_1, \ldots))=(f(y_0), y_0, y_1, \ldots)$.  The projection $\Pi:Y\to I$ is defined as $\Pi:y=(y_0, y_1,\ldots)\mapsto y_0$.  Hence $\Pi\circ {\bar f}=f\circ \Pi$.  As in 
\cite{Roh61} (see also \cite[Section 2.7]{PrzUrb10}), for any $\mu\in \M$ there is a unique ${\bar f}$-invariant probability measure $\overline\mu$ on $Y$ such that $\Pi_*\overline\mu=\mu$.  Moreover, $\overline\mu$ is an ergodic invariant probability measure for  ${\bar f}^{-1}$.

The triplet $(Y, {\bar f}, \overline\mu)$ is called the \emph{natural extension} of $(I, f,\mu)$.
The following is a mild adaptation of 
 \cite[Theorem 4.1]{Dob08}, see also \cite[Theorem 8]{Led81}.
 
 \begin{theorem}
Suppose that $\mu\in \mathcal{G}^H_f$ has  $\chi:=\int\log|Df|~d\mu>0$ and let $(Y,{\bar f}, \overline\mu)$ denote the natural extension of $(I, f, \mu)$.  Then there exists a measurable function $g$ on $Y$, $0<g<\frac12$ $\overline\mu$-a.e. such that for $\overline\mu$-a.e. $y\in Y$ there exists a set $V_y\subset Y$ with the following properties:
\begin{itemize}
\item $y\in V_y$ and $\Pi V_y=B(\Pi y, g(y))$;
\item for each $n\in \N$, the set $\Pi {\bar f}^{-n}V_y$ is contained in $\tilde\P_n$;
\item for all $y'\in V_y$,
$$\sum_{i=1}^\infty\left|\log|Df(\Pi {\bar f}^{-i} y')|-\log|Df(\Pi {\bar f}^{-i} y)|\right|<\log 2;$$
\item For each $\eta>0$ there exists a measurable function $\rho$ on $Y$ mapping into $[1,\infty)$  a.e. and such that
$$\rho(y)^{-1}e^{n(\chi-\eta)}<|Df^n(\Pi {\bar f}^{-n}y)|< \rho(y)e^{n(\chi-\eta)},$$
in particular, $|\Pi {\bar f}^{-n}V_y|\le 2\rho(y)e^{-n(\chi-\eta)}|\Pi V_y|$.
\end{itemize}

\label{thm:dobbs}
\end{theorem}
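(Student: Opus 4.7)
The plan is to adapt the argument of \cite[Theorem 16]{Dob08} to treat the augmented singular set $B:=\Crit_c\cup\partial H$ in place of just $\Crit_c$, with the polynomial-tail condition $\mu\in\mathcal{G}^H_f$ taking over the role that integrability of $-\log\dist(\cdot,\Crit_c)$ plays in Dobbs' original statement.

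The first step is to establish slow approach of backward orbits to $B$. The tail bound $\mu(N_\epsilon(\partial H))\le C\epsilon^\beta$ combined with a Borel--Cantelli argument applied to $\epsilon_n=e^{-\eta n}$ gives, for $\overline\mu$-a.e.\ $y\in Y$,
\[
\dist(\Pi F^{-n}y,\partial H)>e^{-\eta n}\qquad\text{for all }n\ge N_0(y);
\]
the analogous estimate around $\Crit_c$ follows from standard arguments for measures with positive Lyapunov exponent in the presence of non-flat critical points. The second step is Lyapunov control: applying Birkhoff to $(Y,F^{-1},\overline\mu)$ with integrand $\log|Df|\circ\Pi$ yields $\frac1n\sum_{i=1}^n\log|Df(\Pi F^{-i}y)|\to\chi$, and an Egorov argument converts this into a measurable function $\rho\ge 1$ witnessing the fourth bullet.

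With these two ingredients I would define
\[
g(y):=\min\Bigl(\tfrac14,\;\inf_{n\ge 0}\tfrac12\,\rho(y)^{-1}e^{-n(\chi-\eta)}\dist(\Pi F^{-n}y,B)\Bigr),
\]
which is strictly positive $\overline\mu$-a.e.\ since $\eta<\chi$. The set $V_y$ would then be taken to consist of those $y'$ whose projection lies in $B(\Pi y,g(y))$ and whose backward iterates track those of $y$ through a common component at every level. The choice of $g$ forces the diameter of $\Pi F^{-n}V_y$ to be at most $2\rho(y)^{-1}e^{-n(\chi-\eta)}g(y)<\dist(\Pi F^{-n}y,B)$, which secures the second bullet. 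The third bullet follows from a geometric-series summation, bounding each term $|\log|Df(\Pi F^{-i}y')|-\log|Df(\Pi F^{-i}y)||$ by distortion estimates that distinguish iterates falling near a critical point (where non-flatness $f(x)-f(c)\sim|g_c(x)|^{\ell_c}$ provides the necessary control) from those in regions where $|Df|$ is bounded below; after this summation one shrinks $g$ by a uniform constant to guarantee the bound $<\log 2$.

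The main obstacle, and the only genuine departure from \cite{Dob08}, lies in Step~1: Dobbs assumes $-\int\log\dist(x,\Crit_c)\,d\mu<\infty$, whereas here the $\partial H$-part of $B$ is governed only by the weaker polynomial tail $\mu\in\mathcal{G}^H_f$. The Borel--Cantelli input $\sum_n\mu(N_{e^{-\eta n}}(\partial H))\le C\sum_n e^{-\eta\beta n}<\infty$ is delivered for any $\eta>0$, so the rate can be taken small enough to satisfy the constraint $\eta<\chi$ imposed by Step~2. Once this compatibility is verified, the remainder of Dobbs' construction carries through essentially verbatim, with the refined partition $\tilde\P_n$ replacing $\P_n$.
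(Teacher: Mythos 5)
Your proposal is correct in substance and follows the same overall architecture as the paper: both reduce the theorem to Dobbs' construction and isolate, as the only genuinely new ingredient, a slow-recurrence statement for backward orbits to $\partial H$ extracted from the hypothesis $\mu\in\mathcal{G}^H_f$ (the remaining three bullets being handled exactly as you sketch, via Birkhoff--Egorov for $\rho$ and the standard pullback/distortion argument). The one place you diverge is in how that recurrence statement is proved. The paper first observes that the polynomial tail $\mu(B_\ve(\partial H))\le C\ve^\beta$ implies $-\int\log\dist_{\partial H,\delta}\,d\mu<\infty$ (and can be made $<\eta/2$ by shrinking $\delta$), and then applies the Birkhoff ergodic theorem to $(Y,F^{-1},\overline\mu)$, which directly yields the form needed in Dobbs' Lemma~20, namely $d(f^k(\Pi F^{-n}y),\partial H)>e^{-\eta n}$ for \emph{all} $0\le k\le n-1$ once $n$ is large. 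Your Borel--Cantelli argument on $\epsilon_n=e^{-\eta n}$ is an equally valid and arguably more direct route; note only that you should record that your conclusion $d(\Pi F^{-j}y,\partial H)>e^{-\eta j}$ for $j\ge N_0(y)$ does upgrade to the uniform-in-$k$ version (since $f^k(\Pi F^{-n}y)=\Pi F^{-(n-k)}y$ and $e^{-\eta n}\le e^{-\eta(n-k)}$, with the finitely many indices $j<N_0(y)$ absorbed by taking $n$ large), as that is the form actually fed into the partition $\tilde\P_n$. One slip to fix: in your definition of $g$ the exponential factor should be $e^{+n(\chi-\eta)}$, not $e^{-n(\chi-\eta)}$ (you want $2g(y)\rho(y)e^{-n(\chi-\eta)}<\dist(\Pi F^{-n}y,B)$, so $g(y)<\tfrac12\rho(y)^{-1}e^{n(\chi-\eta)}\dist(\Pi F^{-n}y,B)$); as written the infimum is zero. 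With the corrected sign, positivity of $g$ follows precisely because the recurrence rate $\varsigma$ to $B$ can be chosen smaller than $\chi-\eta$, which is the compatibility you already point out at the end.
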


The only significant change to the proof given in  \cite[Section 4]{Dob08} is to input information on the rate of approach of typical points to the boundary of the adapted partition $\tilde\P_n$, rather than simply $\P_n$, which is then applied in Lemma 4.5 of that paper.  This information is contained in the following lemma.

\begin{lemma}
Given $\mu\in \mathcal{G}^H_f$, for each $\eta >0$, for $\overline\mu$-a.e. $y\in Y$ there exists $N\in \N$ such that $n\ge N$ implies $d(f^k(\Pi {\bar f}^{-n}y), \bd H)>e^{-\eta n}$ for all $0\le k\le n-1$. 
\label{lem:avoid holes}
\end{lemma}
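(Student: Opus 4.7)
The statement is a direct application of Borel--Cantelli once recast on the natural extension. Using $f(y_{i+1}) = y_i$ together with $F^{-j}(y_0,y_1,\dots) = (y_j,y_{j+1},\dots)$ and the identity $\Pi F^{-j} = \mathrm{pr}_j$, one sees that $f^k(\Pi F^{-n}y) = y_{n-k}$. Thus, setting $j := n-k$, the claim is equivalent to: for $\overline\mu$-a.e.\ $y$ there exists $N=N(y)$ so that for all $n \ge N$ and all $1 \le j \le n$ one has $d(y_j, \bd H) > e^{-\eta n}$. This is exactly the kind of slow-approach statement the defining property of $\G^H_f$ is designed to handle.

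The main step is to bound the mass of the bad sets. For $n \ge 1$ and $1 \le j \le n$, define
\[
B_{n,j} := \{ y \in Y : d(y_j, \bd H) \le e^{-\eta n}\} = F^{j}\,\Pi^{-1}\!\bigl(N_{e^{-\eta n}}(\bd H)\bigr).
\]
The $F$-invariance of $\overline\mu$ together with $\Pi_*\overline\mu = \mu$ gives $\overline\mu(B_{n,j}) = \mu\bigl(N_{e^{-\eta n}}(\bd H)\bigr)$. By the definition of $\G^H_f$ (see \eqref{eq:class G}) there exist $C,\beta > 0$ with $\mu(N_\ve(\bd H)) \le C \ve^\beta$ for all $\ve > 0$; hence $\overline\mu(B_{n,j}) \le C e^{-\eta \beta n}$. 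A union bound over $j \in \{1,\ldots,n\}$ and summation over $n$ yield
\[
\sum_{n=1}^\infty \overline\mu\Bigl(\bigcup_{j=1}^n B_{n,j}\Bigr) \;\le\; C \sum_{n=1}^\infty n\, e^{-\eta \beta n} \;<\; \infty.
\]

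The Borel--Cantelli lemma then supplies, for $\overline\mu$-a.e.\ $y$, an $N(y)$ beyond which $y \notin \bigcup_{j=1}^n B_{n,j}$, which is exactly the desired conclusion. No real obstacle arises: the substantive work was already carried out upstream in showing $\nu_t \in \G^H_f$ (Proposition~\ref{prop:G^H_f}), and the only delicate point here is that the polynomial factor $n$ coming from the union bound over $j$ is absorbed by the exponential $e^{-\eta \beta n}$. This uses crucially that membership in $\G^H_f$ provides a uniform polynomial modulus $C\ve^\beta$ rather than merely $\mu(N_\ve(\bd H)) \to 0$; a weaker rate could fail to give a summable series.
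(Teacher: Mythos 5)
Your proof is correct, but it takes a genuinely different route from the paper's. You recast the claim as $d(y_j,\bd H)>e^{-\eta n}$ for $1\le j\le n$ (the identity $f^k(\Pi F^{-n}y)=y_{n-k}$ is right), bound $\bmu(B_{n,j})=\mu(N_{e^{-\eta n}}(\bd H))\le Ce^{-\eta\beta n}$ by $F$-invariance and the defining polynomial modulus of $\G^H_f$, and conclude by the first Borel--Cantelli lemma after a union bound over $j$. The paper instead applies the Birkhoff ergodic theorem on $(Y,F^{-1},\bmu)$ to the truncated observable $\log\dist_{\bd H,\delta}$: membership in $\G^H_f$ makes $-\log\dist_{\bd H,\delta}$ $\mu$-integrable with integral that can be made $<\eta/2$ by shrinking $\delta$, and since each summand is nonpositive, the bound on the Birkhoff sum forces each individual term to exceed $-\eta n$ eventually. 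The trade-off: your argument is more elementary (no ergodic theorem, and in fact no ergodicity of $\mu$ is needed, only invariance), but it leans on the full quantitative strength of the bound $\mu(B_\ve(\bd H))\le C\ve^\beta$ to get a summable series --- as you correctly note, mere decay of $\mu(N_\ve(\bd H))$ would not suffice for Borel--Cantelli, whereas the paper's route only needs $\log$-integrability of the distance to $\bd H$ (a strictly weaker consequence of \eqref{eq:class G}) together with ergodicity. One cosmetic point: the definition \eqref{eq:class G} bounds the open ball $B_\ve(\bd H)$, while your $B_{n,j}$ uses a closed neighborhood; this costs only a factor of $2^\beta$ in the constant and does not affect the argument.
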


\begin{proof}
For any subset $A\subset I$ and $\delta>0$, set $\dist_{A,\delta}(x):= d(x,A)$ if $d(x,A)<\delta$ and 1 if 
 $d(x,A)\ge \delta$.  So if $\mu\in \mathcal{G}^H_f$ then $-\int\log \dist_{\bd H,\delta}(x)~d\mu(x)<\infty$.  Moreover, for any $\eps>0$ there exists $\delta>0$ such that $-\int\log \dist_{\bd H,\delta}(x)~d\mu(x)<\eps$.  Since $(Y, {\bar f}^{-1},\overline\mu)$ is an ergodic dynamical system, by the ergodic theorem, for $\overline\mu$-a.e. $y\in Y$,
 \begin{align*}
 \frac1n\sum_{k=0}^{n-1}\log \dist_{\bd H,\delta}(f^k(\Pi {\bar f}^{-n}y))&\to\int\log \dist_{\bd H,\delta}(\Pi y)~d\overline\mu(y)\\
 &=\int\log \dist_{\bd H,\delta}(x)~d\mu(x),\text{ as } n\to \infty. 
 \end{align*}
Now fix $\eta > 0$ and choose $\delta$ so that $- \int \log \dist_{\partial H, \delta}(x) \, d\mu(x) < \eta/2$.  Then by the above limit, for $\bmu$-a.e.\ $y \in Y$, there exists $N = N(y)$ satisfying the
statement of the lemma. 
\end{proof}

We need one more lemma before completing the proof of Proposition~\ref{prop:lower bound new}.

\begin{lemma}
\label{lem:large scale new}
For each $t \in [t_0, t_1]$ and all $\delta>0$, there exists $\delta_t'>0$ such that for every $x\in I$, $m_t((x-\delta, x+\delta))>\delta_t'$. 
\end{lemma}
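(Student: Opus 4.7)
My plan is to combine the topological mixing property \eqref{eq:no holes mixing} with conformality of $m_t$ to transfer mass from a fixed reference ball around $c^*$ to any ball of radius $\delta$. It suffices to treat $\delta \le \delta_*/3$, where $\delta_*$ is the constant from \eqref{eq:no holes mixing}: larger values of $\delta$ yield only larger $m_t$-mass on $(x-\delta,x+\delta)$, so the small-$\delta$ case implies the general one.

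Fix $x \in I$ and write $\omega = (x-\delta, x+\delta)$. Since $|\omega| = 2\delta \ge (3\delta)/3$, applying \eqref{eq:no holes mixing} with parameter $3\delta$ produces an integer $n' \le n(3\delta)$ and a subinterval $\omega' \subset \omega$ such that $f^{n'}\colon \omega' \to B$ is a diffeomorphism, where $B := (c^* - 9\delta, c^* + 9\delta)$ is an interval that does not depend on $x$. Using that $m_t$ is $\phi_t$-conformal for $\phi_t = -t\log|Df| - p_t$, and that $f^{n'}$ is injective on $\omega'$, I obtain
$$ m_t(B) \;=\; \int_{\omega'} |Df^{n'}|^t\, e^{n' p_t}\,dm_t \;\le\; e^{n(3\delta)\,(t\log M + |p_t|)}\, m_t(\omega'),$$
where $M := \sup_I |Df|$. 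Since $\omega' \subset \omega$, this yields $m_t(\omega) \ge C(\delta, t)\, m_t(B)$ with $C(\delta, t) > 0$ independent of $x$.

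The main (and essentially only) remaining obstacle is to verify that the fixed ball $B$ has positive $m_t$-mass. I will argue by contradiction: if $m_t(B) = 0$, the inequality above forces $m_t((x-\delta, x+\delta)) = 0$ for every $x \in I$. Since $I$ is compact and can be covered by finitely many such balls, countable subadditivity yields $m_t(I) = 0$, contradicting the fact that $m_t$ is a probability measure. Setting $\delta_t' := C(\delta, t)\, m_t(B) > 0$ then completes the proof. Uniformity in $t \in [t_0, t_1]$ is not required by the statement, but it comes for free from continuity of $t \mapsto p_t$ on this compact interval.
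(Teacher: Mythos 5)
Your overall strategy (transferring mass via the mixing property \eqref{eq:no holes mixing} and conformality) is sound and genuinely different from the paper's, which simply invokes the known fact that $m_t$ gives every open set positive mass together with compactness of $I$. However, your final step — the one you correctly identify as the main obstacle — contains a genuine logical error. The inequality you derive is $m_t(B) \le e^{n(3\delta)(t\log M + |p_t|)}\, m_t(\omega')$, equivalently $m_t(\omega) \ge m_t(\omega') \ge C(\delta,t)\, m_t(B)$. This is a \emph{lower} bound on $m_t(\omega)$ in terms of $m_t(B)$, so the hypothesis $m_t(B)=0$ renders it vacuous; it does not force $m_t((x-\delta,x+\delta))=0$. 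The most you could extract is that $m_t(B)=0$ implies $m_t(\omega'_x)=0$ (since the Jacobian $|Df^{n'}|^t e^{n'p_t}$ is strictly positive on $\omega'_x$), but the sets $\omega'_x$ are proper subintervals of the balls $(x-\delta,x+\delta)$ and need not cover $I$, so the covering/subadditivity contradiction collapses. To run the implication in the direction you want you would need an upper bound $m_t(\omega) \le C' m_t(B)$, which you have not established.

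The gap is easily repaired, and in fact part (i) of \eqref{eq:no holes mixing} short-circuits the whole detour through $B$: since $|\omega| \ge \delta$, taking $n = n(3\delta)$ gives $f^{n}\omega \supseteq I$, and decomposing $\omega$ into injectivity domains of $f^{n}$ and applying conformality on each yields
\[
1 = m_t(I) \le m_t(f^{n}\omega) \le \int_{\omega} |Df^{n}|^t e^{n p_t}\, dm_t \le e^{n(3\delta)(t\log M + |p_t|)}\, m_t(\omega),
\]
so $m_t(\omega) \ge e^{-n(3\delta)(t\log M + |p_t|)} =: \delta_t' > 0$ uniformly in $x$. (Alternatively, the same estimate applied to $B$ itself shows $m_t(B)>0$ and rescues your version.) With that repair your argument is complete and arguably more self-contained than the paper's, since it derives the full support of $m_t$ rather than citing it; the price is reliance on \eqref{eq:no holes mixing} and the topological mixing hypothesis.
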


\begin{proof}
This follows from the fact that $m_t$ gives open sets positive mass and the compactness of $I$.
\end{proof}

\begin{proof}[Proof of Proposition~\ref{prop:lower bound new}]
Fix $t \in [t_0, t_1]$.
For any element $\tilde\cyl_n\in \tilde P_n$,  by construction either $f^{n-1}(\tilde\cyl_n)\subset H$ or $f^{n-1}(\tilde\cyl_n)\cap H=\es$, so either $\tilde\cyl_n$ is contained in $\I^{n-1}$ or it is outside $\I^{n-1}$.  Notice that the partition given by Theorem~\ref{thm:dobbs} is subordinate to $\tilde P_1$.  This will give us subsets of cylinders $\tilde\cyl_n\in \tilde\P_n$ on which we have a good idea of the distortion. 

Fix $\mu \in \G_f^H$ and the corresponding measure $\bmu$ in $Y$.
Set $\eta>0$ and let $\rho$ be as in Theorem~\ref{thm:dobbs}.
For  $\delta,K>0$, let $y\in \overline I_{\delta,K}:=\{y\in Y:|\Pi V_y|>\delta\text{ and } \rho(y)<K\}$.    Fix $\eps>0$ and choose $\delta>0$ small enough and $K$ large enough such that 
$\bmu(\overline I_{\delta,K} ) \ge 1-\eps$.  By invariance,
$\bmu({\bar f}^{-n} (\overline I_{\delta,K})) = \mu(\Pi {\bar f}^{-n}(\overline I_{\delta,K})) \ge 1-\ve$.

Since $\mu$ is supported on $\I^\infty$, for $\bmu$-a.e.\ $y \in \overline I_{\delta,K}$,
this yields $x \in \I^\infty$ such that $x = \Pi {\bar f}^{-n}y $.  Moreover, defining
$$
I_{\delta, K, n}:=\{\Pi {\bar f}^{-n}y:y\in \overline I_{\delta,\kappa} \}\cap \I^{\infty},
$$
we have $\mu(I_{\delta, K, n}) \ge 1-\eps$ for all $n$.  

For every $x\in I_{\delta, K, n}$ we take the corresponding $y\in \overline I_{\delta,K}$ and set $V_{x,n}:=\Pi {\bar f}^{-n}V_y$.  By this setup, $|f^n(V_{x,n})|>\delta$.
By the Mean Value Theorem  and the conformality of $m_t$ there exists $z\in \Pi {\bar f}^{-n}V_y$ such that 
$$
m_t(V_{x,n}) = |Df^n(z)|^{-t}e^{-np_t} m_t(f^n(V_{x,n})).
$$  
Since $V_{x,n} \subset \tilde\cyl_n[x]$ and the two last parts of Theorem~\ref{thm:dobbs} we have
\begin{align}
\label{eq:Cn bound}
m_t(\tilde\cyl_n[x]) & \ge m_t(V_{x,n}) \ge \frac1{2^t} |Df^n(x)|^{-t}e^{-np_t} m_t(f^n(V_{x,n})) \\
& \ge  \frac1{2^tK}  e^{-nt(\chi(\mu)+\eta)} e^{-np_t} m_t(f^n(V_{x,n}))\ge \frac1{2^tK}  e^{-nt(\chi(\mu)+\eta)}e^{-np_t}\delta_t',
\end{align}
for $\delta_t'>0$ depending on $\delta>0$ as in Lemma~\ref{lem:large scale new}.

Now we use the Shannon-McMillan-Breiman Theorem, see for example \cite[Section 2.5]{PrzUrb10}, to assert that on a set $E \subset \I^\infty$ of $\mu$-measure at least $1 - \eps$, 
$\mu(\tilde \cyl_n[x]) \le e^{-n(h(\mu) - \eta)}$ for $x\in E$ and $n$ sufficiently large.
Thus the number of distinct cylinders $\tilde \cyl_n[x]$ with $x \in E \cap I_{\delta, K, n}$ is
at least $(1-2\ve) e^{n(h(\mu) - \eta)}$ for all $n$ large enough.  
Notice that such cylinders are in $\I^{n-1}$
by definition of $\tilde \P_n$.
Combining this with \eqref{eq:Cn bound}, we obtain,
$$
m_t(\I^{n-1})\ge \sum_{x\in I_{\kappa, \delta, n} \cap E} m_t(\tilde \cyl_n[x])\ge   (1-2\ve)
e^{n(h(\mu)-\eta)} \frac{\delta_t'}{2^t K}  e^{-nt(\chi+\eta)}e^{-np_t}.$$
So by the arbitrary choice of $\eta>0$, taking logs of both sides, dividing by $n$ 
and letting $n\to\infty$ yields 
$m_t(\I^{n-1})\ge h(\mu)-t\chi(\mu)$. Taking a supremum over all $\mu\in \mathcal{G}_f^H$, the proposition is proved.
\end{proof}


\section{A Bowen Formula: Proof of Theorem~\ref{thm:hausdorff}}
\label{sec:bowen}

\begin{theorem}
Under the assumptions of Theorem~\ref{thm:variational}, 
$\dim_H(\I^\infty)=t^*$ where $t^*$ is the unique 
value of $t$ such that $P_\Delta(t \varphi^H)=0$.
\end{theorem}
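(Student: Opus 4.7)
The plan is to prove the two inequalities $\dim_H(\I^\infty)\le t^*$ and $\dim_H(\I^\infty)\ge t^*$ separately, using Theorem~\ref{thm:lift}(b) to pass between $\I^\infty$ and $\pi(\hDelta^\infty)$, and exploiting the equivalence of the three formulations $P_\Delta(t\vf^H)=0$, $\e(m_t)=p_t$, and $\sup_{\mu\in\G^H_f}\{h_\mu(f)+t\int\vf\,d\mu\}=0$ of the equation defining $t^*$ (all three consistent by Theorem~\ref{thm:variational} together with the identity $P_\Delta(t\vf^H)=\log\lambda_t^H+p_t$ read off from Proposition~\ref{prop:var tower} and Lemma~\ref{lem:project jac}). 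Existence and uniqueness of $t^*$ follow from the fact that $t\mapsto P_\Delta(t\vf^H)$ is continuous, convex and strictly decreasing (since every lifted $\mu$ has $\int\vf\,d\mu<0$ by \cite{Prz93} and the absence of neutral periodic orbits), is positive at $t=0$ (the open system has positive topological entropy, witnessed by $\nu_1^H$ from \cite{BDM}) and is strictly negative at $t=1$ (since $\e(m_1)>0=p_1$ whenever the hole is nontrivial).

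For the lower bound, I invoke Theorem~\ref{thm:variational} at $t=t^*$: the variational supremum equals zero and is attained by $\nu_{t^*}^H\in\G^H_f$, so $h(\nu_{t^*}^H)=t^*\chi(\nu_{t^*}^H)$. Since $\chi(\nu_{t^*}^H)>0$, the Hofbauer dimension formula \cite{Hofdim} yields $\dim_H(\nu_{t^*}^H)=h(\nu_{t^*}^H)/\chi(\nu_{t^*}^H)=t^*$. As $\nu_{t^*}^H$ is supported on $\I^\infty$, this gives $\dim_H(\I^\infty)\ge t^*$.

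For the upper bound, I cover $\hDelta^\infty\cap\Delta_0$ by the $n$-cylinders $\{X_{n,i}\}_i$ of $\hF$ that lie in surviving columns. By (A1)(a) and bounded distortion, for any $y_i\in X_{n,i}$,
\[
|\pi(X_{n,i})|\le C\,|DF^n(y_i)|^{-1}=C\,e^{S_{\tau^n}\vf(y_i)}.
\]
The defining identity $\lambda_{t^*}^{-1}e^{-p_{t^*}}=1$ (which is $\e(m_{t^*})=p_{t^*}$ rearranged), combined with the Gibbs estimate of Lemma~\ref{lem:nu bounds}, gives
\[
\tnu_{t^*}(X_{n,i})\asymp\lambda_{t^*}^{-\tau^n(y_i)}\,e^{t^* S_{\tau^n}\vf(y_i)-\tau^n(y_i)p_{t^*}}=e^{t^* S_{\tau^n}\vf(y_i)},
\]
so $\sum_i|\pi(X_{n,i})|^{t^*}\le C\sum_i\tnu_{t^*}(X_{n,i})\le C\tnu_{t^*}(\Delta_0)<\infty$. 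Since $\max_i|\pi(X_{n,i})|\to 0$ as $n\to\infty$ by (A1)(a), this forces $\mathcal{H}^{t^*}(\hDelta^\infty\cap\Delta_0)<\infty$ and so $\dim_H(\hDelta^\infty\cap\Delta_0)\le t^*$. To extend to the full tower, I write for each $\ell\ge 1$ that $\pi(\hDelta^\infty\cap\Delta_\ell)\subset f^\ell(B_\ell)$ with $B_\ell\subset\hF^{-1}(\hDelta^\infty\cap\Delta_0)\cap\{\tau>\ell\}$; bi-Lipschitz invariance of $\dim_H$ on each good cylinder $X_i$ with $\tau_i>\ell$, and the fact that $\hF^{-1}$ preserves Hausdorff dimension, give $\dim_H(\pi(\hDelta^\infty\cap\Delta_\ell))\le t^*$, which a countable union over $\ell$ preserves. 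Theorem~\ref{thm:lift}(b) then delivers $\dim_H(\I^\infty)\le t^*$.

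The principal technical point is the cancellation in the middle display above: it is precisely the definition $\lambda_{t^*}^{-1}e^{-p_{t^*}}=1$ of the critical parameter that makes the Gibbs weight of $\tnu_{t^*}$ on an inducing cylinder coincide with the $t^*$-th power of the diameter of its projection, which is what forces the covering sum to be bounded uniformly in $n$. Everything else amounts to packaging previously established tools: the equilibrium state $\nu_{t^*}^H$ (Theorem~\ref{thm:variational}), the expansion and distortion bounds (A1), the Gibbs estimate (Lemma~\ref{lem:nu bounds}), and the transfer of Hausdorff dimension from tower to interval (Theorem~\ref{thm:lift}(b)).
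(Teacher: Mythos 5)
Your proof is essentially correct, but it takes a genuinely different route from the paper. The paper disposes of both inequalities in one stroke by coding $\hDelta$ as a countable Markov shift and invoking Iommi's Bowen formula \cite{Iom} for the metric potential $t\vf_\Delta$ (the H\"older regularity needed there being Lemma~\ref{lem:distortion}); the translation $P_{\M^H_{f_\Delta}}(t^*\vf_\Delta)=0 \Leftrightarrow P_\Delta(t^*\vf^H)=0$ is then read off from the proof of Theorem~\ref{thm:variational}, exactly as in your preliminary reductions. You instead prove the two bounds directly with objects already built in the paper: the lower bound via the equilibrium state $\nu_{t^*}^H$ and Hofbauer's formula $\dim_H(\mu)=h(\mu)/\chi(\mu)$ (legitimate since $\chi\ge\chi_m>0$), and the upper bound via a covering of $\hDelta^\infty\cap\Delta_0$ by surviving $\hF$-cylinders, where the Gibbs property of $\tnu_{t^*}$ (Lemma~\ref{lem:nu bounds}) and the identity $-\log\lambda_{t^*}=p_{t^*}$ make the cylinder weights match $|\pi(X_{n,i})|^{t^*}$ — this is, in effect, a reconstruction of the mass-distribution argument underlying Iommi's theorem in the special case at hand. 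What your approach buys is self-containedness and transparency about where the defining equation of $t^*$ enters; what the paper's approach buys is brevity and the uniqueness of $t^*$ for free. Two caveats apply equally to both arguments and are worth making explicit: the machinery (the tower, $\tnu_{t^*}$, $\nu_{t^*}^H$) is only available for $t^*\in[\bar t_0,\bar t_1]$, which holds for small holes since $t^*\ge\dim_H(\nu_1^H)=1+\log\lambda_1^H$ is close to $1$; and your passage from $\Delta_0$ to the full tower should note that points of $\hDelta^\infty$ at level $\ell$ pull back under $f_\Delta^{-\ell}$ into surviving base cylinders with $\tau>\ell$, on which $f^\ell$ is Lipschitz and hence does not increase Hausdorff dimension — which is what you assert, and it is correct.
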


\begin{proof}
We will use the main result of \cite{Iom}, which is a Bowen formula for countable Markov shifts, but similarly to applications in that paper, extends to our case here as follows.
As in Theorem~\ref{thm:lift}(b), for $h$ small enough and $H \in \H(h)$, 
$\dim_H(\I^\infty)=\dim_H(\hDelta^\infty)$.  
The structure of $\Delta$ allows us to code it as a countable Markov shift.
The required result then follows as a consequence of \cite{Iom}, where the 
corresponding `metric potential' as in \cite[(3)]{Iom} is $t \vf_\Delta$
using the H\"older regularity provided by Lemma~\ref{lem:distortion}.

Now \cite[Theorem 3.1]{Iom} gives $\dim_H(\hDelta^\infty)= t^*$ where $t^*$ is the
unique value of $t$ satisfying
$P_{\M^H_{f_\Delta}}(t^* \vf_\Delta) = 0$.  Then following the proof of
Theorem~\ref{thm:variational} from Section~\ref{sec:var proof} yields
$P_\Delta(t^* \vf^H)= 0$.
\end{proof}

Recall that the invariant measure $\nu^H_t$ constructed in Section~\ref{sec:var proof}
achieves the supremum in the variational formula and belongs to $\G^H_f$.
Also, $\nu^H_f$ lifts to $\Delta$, so that it is included in the pressure
$P_\Delta(t \vf^H)$.  Thus $P_{\G^H_f}(t^* \vf^H)= 0$, completing the proof 
of Theorem~\ref{thm:hausdorff}.  


\section{Zero-Hole Limit:  Proofs of Theorems~\ref{thm:zero hole} and \ref{thm:zero hole adapted}}
\label{zero hole}

Recall that for Theorem~\ref{thm:zero hole}, we consider holes of the form
$H_\ve = (z-\ve, z+\ve)$, for $z \in I$ satisfying the following condition: 
\beq
\label{eq:slow approach}
\begin{array}{l}
\mbox{There exist } \varsigma, \delta_z > 0 \mbox{ with } \varsigma < \min \{ 2 \vartheta_c , \alpha/s_t \} , \\
\mbox{such that } |f^n(c) - z| \ge \delta_z e^{-n\varsigma} \mbox{ for all } n \ge 0 .
\end{array}
\eeq
Here $\vartheta_c$ is from (C2),
$\alpha$ is from Proposition~\ref{prop:uniform tails} and $s_t$ is the 
local dimension of $m_t$ at $z$ given in  
Lemma~\ref{lem:scale} below.
The fact that this is a generic condition with respect to both $m_t$ and $\mu_t$ is proved in 
 Lemma~\ref{lem:typical}. Recall that \eqref{eq:slow approach} is part of condition (P).

Because we will need to maintain careful control of the constants involved in the tower
construction along our sequence of holes, we recall the following set of choices explicitly.

Fix $z \in I$ satisfying \eqref{eq:slow approach} so that  $\Crit_{\mbox{\tiny hole}} = \{ z \}$ 
 satisfies (H2) and \eqref{eq:no holes mixing} with $\delta = \delta_0$.  This fixes $n(\delta_0)$ and
 all appearances of $\delta$ in (C1)-(C2) to have value $\delta_0$.
 As before, denote by $\H(h)$ the family of intervals $H$ such that $z \in H$, $m_1(H) \le h$
 and $H$ satisfies (H1).  All our intervals $H_\ve$ are required to belong to $\H(h)$
 for some $h >0$.

\subsection{Preparatory Lemmas and the proof of Theorem~\ref{thm:zero hole}}
\label{prep}

We adopt the following notation for our family of inducing schemes:
$(\cup_i X_\eps^i, F_\ve, \tau_\ve, H_\ve)$, so that $F(X^i_\ve) = X$.  
Note that in this notation, $X$ is fixed for all $H_\ve \in \H(h)$.

When we view $H_\ve$ as a hole for the open system, 
if $k < \tau(X^i_\ve)$ is the first time that $f^k(X^i_\ve) \subset H$, we define 
$\mathring \tau_\ve(X^i_\ve) = k$; otherwise if $X^i_\ve$ returns to $X$ before
encountering $H$, we set $\mathring \tau_\ve(X^i_\ve) = \tau_\ve(X^i_\ve)$.
The induced map for the open system $\hF_\ve$ is defined similarly
so that $\hF_\ve(x) = F_\ve(x)$ whenever $x \in X$ returns to $X$ without entering $H$
along the way.  

Now for  $H \in \H(h)$, we have a tower $(f_\Delta, \Delta(H))$ constructed so that
$\pi^{-1}H$ is a union of 1-cylinders $\tH_{\ell,j}$.  We recall 
that the family of towers corresponding
to $\H(h)$ satisfy (P1)-(P4) with uniform constants.  
 For our function space $\B$ on the tower,
we choose $\beta < \min \{\alpha, t_0 \log \xi \}$ as in Section~\ref{ssec:uniform p1-p4} 
and add the further requirement
that $\beta < \alpha - \varsigma s_t$, where $\varsigma$ is from \eqref{eq:slow approach} and 
$s_t$ is the scaling exponent from Lemma~\ref{lem:scale}.  Note that with this choice of 
$\beta$, the exponent $\varsigma$ necessarily satisfies
\begin{equation}
\label{eq:varsigma}
\varsigma < \min \{ (\alpha - \beta)/s_t, 2 \vartheta_c \} .
\end{equation}
Also, if we need to shrink $\beta$ in what follows, this does not affect the value
of $\varsigma$, which is fixed and depends on $z$.
 
Let $g^H_t \in \B$ denote the 
eigenfunction corresponding to $\lambda^H_t$ for $\Lp_{\phi^H_t} := \hLp_{\phi_{\Delta,t}}$.  
We have introduced this new notation for the transfer operator with the hole 
in order to make dependence on $H$ explicit.  We drop the subscript $\Delta$ since
all the objects we work with in this section will be on the tower.  We denote by
$g^0_t \in \B$ the invariant probability density for the transfer operator without the
hole, $\Lp_{\phi_t}$.  We remark that $g^0_t$ also depends on $H$ since different
$H$ induce different towers $\Delta(H)$, but the projection $\pa_\pi g^0_t$ is independent of
$H$.

We use the notation $\hDelta^n(H)$ to indicate the set of points that has not escaped $\Delta(H)$
by time $n$ and set $\hDelta^0(H) = \hDelta(H)$. 
Recall the notation $\tH = \pi^{-1}H$,  $\tH = \cup_\ell \tH_\ell$ and
$\tH_\ell = \cup_j \tH_{\ell,j}$.

With our definitions, $g^H_t \equiv 0$ on $\tH$ and all columns above each $\tH_{\ell,j}$.
Since it will be convenient to make our estimates directly on $\tH$, we
extend $g^H_t$ to $\tH$ by $g^H_t(x) = (\lambda^H_t)^{-1} g(f_\Delta^{-1}x)$ for
$x \in \tH$.  Note that this extended version of $g^H_t$ is still 0 in the column above 
each $\tH_{\ell,j}$.  We normalize $g^H_t$ so that,
$\int_{\Delta(H)} g^H_t \, d\bm_t =1$.

If we redefine $\Lp_{\phi^H_t} \psi := \Lp_{\phi_t} (1_{\hDelta(H)} \psi)$ (rather than
$\Lp_{\phi_t} (1_{\hDelta^1(H)} \psi)$), it extends the operator
so that $\Lp_{\phi^H_t} g^H_t(x) = \lambda^H_t g^H_t(x)$ for
$x \in \tH$.  Note, however, that $\Lp_{\phi^H_t}$ still does not let mass map out of $\tH$ so that
no mass maps to the columns above $\tH$.  We will use this extended definition of
$\Lp_{\phi^H_t}$ for the remainder of this section.

Since $\e(m_t, H_\ve) = -\log \lambda^{H_\ve}_t$,  
proving Theorem~\ref{thm:zero hole} is equivalent to estimating
\[
\lim_{\ve \to 0} \frac{1-\lambda^{H_\ve}_t}{\mu_t(H_\ve)} ,
\]
which we now start to do.

We begin with the key observation that by definition of 
$\Lp_{\phi_t^H}$ and $g^H_t$,
\[
\lambda^H_t = \lambda^H_t \int_{\Delta(H)} g_t^H \, d\bm_t = \int_{\Delta(H)} \Lp_{\phi_t^H} g_t^H \, d\bm_t
= \int_{\hDelta^0(H)} g_t^H \, d\bm_t.
\]
So now for any $n \ge 0$, using the conditional invariance of $g^H_t$, we write
\begin{equation}
\label{eq:hole}
\begin{split}
1 - \lambda^H_t & = \int_{\Delta(H)} g_t^H \, d\bm_t - \int_{\hDelta^0(H)} g_t^H \, d\bm_t 
= \int_{\tH} g_t^H \, d\bm_t \\
& = (\lambda^H_t)^{-n} \int_{\tH} (\Lp_{\phi_t^H}^n g_t^H - \Lp_{\phi_t}^n g_t^0) \, d\bm_t 
+ (\lambda^H_t)^{-n} \int_{\tH} g_t^0 \, d\bm_t  .
\end{split}
\end{equation}

The following lemmas, the proofs of which we give later, will allow us to prove 
Theorem~\ref{thm:zero hole}. 
 
\begin{lemma}
\label{lem:spectral gap}
The transfer operators $\Lp_{\phi_t}$ and $\Lp_{\phi^H_t}$ have a uniform spectral gap
for all $H \in \H(h)$ with $h$ sufficiently small.  More precisely, there exist $C_2 >0$ and
$\sigma_0 < 1$ such that for all $\psi \in \B$ and $n \ge 0$,
\[
\begin{split}
  \| \Lp^n_{\phi_t} \psi - c_0(\psi) g^0_t \|_\B & \le C_2 \sigma_0^n \| \psi \|_\B \| g^0_t \|_\B
  \; \; \; \mbox{and} \\
  \| (\lambda^H_t)^{-n} \Lp_{\phi^H_t}^n \psi - c_H(\psi) g^H_t \|_\B 
 &  \le C_2 \sigma_0^n \| \psi \|_\B \| g^H_t \|_\B,
  \end{split}
\]
where the constants $c_H(\psi)$ and $c_0(\psi)$ represent the magnitude of the
projections of $\psi$ onto the eigenspaces spanned by $g^H_t$ and $g^0_t$, respectively.
Moreover, $c_H(\psi) \to c_0(\psi)$ as $\diam(H) \to 0$ in $\H(h)$. 
\end{lemma}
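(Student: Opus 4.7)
For each fixed $H\in\H(h)$ (and for $H=\es$) the individual spectral gap is a direct consequence of Theorem~\ref{thm:spectrum}, applied to the tower $\Delta(H)$ that satisfies (P1)--(P4). To obtain that $C_2$ and $\sigma_0$ can be chosen uniformly in $H$, I would trace through the proof of Theorem~\ref{thm:spectrum} in \cite{BDM}: the spectral gap there is extracted from a Lasota-Yorke inequality on $\B$ whose constants depend only on those appearing in (P1)--(P4). By Section~\ref{ssec:uniform p1-p4}, together with Proposition~\ref{prop:uniform tails} and Lemma~\ref{lem:uniform P4}, all four of these can be chosen independently of $H\in\H(h)$ once $\beta<\alpha$ is fixed. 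Consequently the LY constants, the resulting bound $e^{-\beta}$ on the essential spectral radius, and the lower bound on $\lambda^H_t$ (which tends to $1$ as $\diam(H)\to 0$ and is uniformly bounded below in $\H(h)$ for $h$ small) are all uniform, yielding a uniform spectral gap strictly separating $\lambda^H_t$ from the essential spectrum.

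The substantive claim is the convergence $c_H(\psi)\to c_0(\psi)$ as $\diam(H)\to 0$. I would view this as a Keller-Liverani perturbation statement. Writing $\Lp_{\phi^H_t}\psi=\Lp_{\phi_t}(1_{\hDelta(H)}\psi)$, the weak-norm bound
\[
\|(\Lp_{\phi_t}-\Lp_{\phi^H_t})\psi\|_{L^1(\bm_t)}=\int_{\tH}|\psi|\,d\bm_t\le \bm_t(\tH)\|\psi\|_\infty
\]
tends to $0$ as $\diam(H)\to 0$ by (P4) and the uniform tail bound of Proposition~\ref{prop:uniform tails}. Combined with the uniform LY inequality on $\B$ established above, this is exactly the hypothesis of the Keller-Liverani theorem. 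Applying it yields continuity of the isolated spectral data at the top of the spectrum: $\lambda^H_t\to 1$, $g^H_t\to g^0_t$ in $L^1(\bm_t)$, and the spectral projector $\Pi^H:\psi\mapsto c_H(\psi)\,g^H_t$ converges in the norm of bounded operators $\B\to L^1(\bm_t)$ to $\Pi^0$. In particular $c_H(\psi)\to c_0(\psi)$ for each $\psi\in\B$.

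The main technical obstacle is that the towers $\Delta(H)$ for different $H\in\H(h)$ are, strictly speaking, distinct measurable spaces: the inducing scheme $(X,F_H,\tau_H)$ varies with $H$ because cuts are inserted along $\partial H$ during its construction. To apply Keller-Liverani one needs the operators to act on a common Banach space. The cleanest resolution is to build, once and for all, a single inducing scheme $(X,F,\tau)$ that respects the boundaries of every $H\in\H(h)$ simultaneously by inserting cuts at all such boundaries; the tower $\Delta$ is then common to all $H\in\H(h)$, and passing from one hole to another only changes which 1-cylinders are deleted. Since the extra cuts merely refine the partition, the distortion estimate (A1) and tail bound of Proposition~\ref{prop:uniform tails} survive (with possibly worsened but still uniform constants), and (P1)--(P4) continue to hold uniformly. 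An alternative, which avoids enlarging the scheme, is to push everything to $I$ via $\pa_{\pi,t}$ and carry out the Keller-Liverani argument on the common H\"older space $\cont^p(I)$, using Lemma~\ref{lem:A2} to transport the LY inequality and the weak-norm smallness estimate down from $\Delta$.
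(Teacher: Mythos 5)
Your first two paragraphs follow essentially the paper's route: uniformity of the spectral data comes from the uniformity of (P1)--(P4) over $\H(h)$, and the comparison of $\Lp_{\phi_t}$ with $\Lp_{\phi^H_t}$ is a Keller--Liverani-type perturbation (the paper invokes \cite[Lemma 3.6]{demers wright}, which packages exactly this for towers with holes, yielding an $\mathcal{O}(h^\epsilon)$ bound on the variation of the eigenvalues and spectral projectors outside the disk of radius $e^{-\beta}$). One caveat on the first paragraph: a Lasota--Yorke inequality with uniform constants gives a uniform bound on the \emph{essential} spectral radius, but not by itself a uniform $\sigma_0$ separating $\lambda^H_t$ from the rest of the discrete spectrum. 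The paper gets the quantitative bound on the second eigenvalue from the projective-metric (Birkhoff cone) estimates of \cite{maume}, where the second eigenvalue is bounded by $\tanh(\mathcal{R}/2)=e^{-\beta}$ with $\mathcal{R}=\log\frac{1+e^{-\beta}}{1-e^{-\beta}}$; you need this (or some substitute) to make ``uniform spectral gap'' literal. Also, with the weighted norm \eqref{eq:norms}, your weak-norm estimate should read $\int_{\tH}|\psi|\,d\bm_t\le\|\psi\|_\infty\sum_\ell e^{\beta\ell}\bm_t(\tH_\ell)$, which is small by (P4)/Lemma~\ref{lem:uniform P4}, not $\bm_t(\tH)\,\|\psi\|_\infty$.

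The third paragraph is where the proposal goes wrong. The ``obstacle'' you identify is not one: the lemma compares the closed operator $\Lp_{\phi_t}$ and the open operator $\Lp_{\phi^H_t}$ acting on the \emph{same} tower $\Delta(H)$, so for each fixed $H$ both $c_H$ and $c_0$ are functionals on the single space $\B(\Delta(H))$ and the perturbation is just multiplication by $1_{\hDelta(H)}$. The objects $g^0_t$ and $c_0$ are permitted to depend on $H$ through the tower (the paper says so explicitly just before Section~\ref{prep}); what is needed, and what is proved, is the uniform bound $|c_H(\psi)-c_0(\psi)|\le Ch^\epsilon\|\psi\|_\B$, which is all that the zero-hole argument uses (it is applied to $\psi=g^0_t$, which lives on $\Delta(H)$). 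Both of your proposed fixes would in fact fail. A single inducing scheme respecting every $H\in\H(h)$ simultaneously cannot be built: $\H(h)$ is an uncountable family whose boundaries accumulate, and the introduction states precisely that a new tower must be constructed for each hole because return times suffer unbounded changes under arbitrarily small perturbations. Pushing the argument down to $\cont^p(I)$ fails too: no spectral gap is known for $\Lp_{\phi_t}$ on $I$ for this class of maps (this is the stated reason for using towers at all), and the invariant density $g^0_t=\pa_{\pi,t}\tilde g^0_t$ is not in $\cont^p(I)$ because of the spikes along the critical orbits. So the correct reading is: run the Keller--Liverani/Demers--Wright perturbation on each $\Delta(H)$ separately, with constants uniform over $\H(h)$ thanks to Proposition~\ref{prop:uniform tails} and Lemma~\ref{lem:uniform P4}.
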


\begin{lemma}
\label{lem:uniform hole}
There exists $C_3 >0$ such that  if  $z$ satisfies \eqref{eq:slow approach} then for  $h$ sufficiently small and for each $H \in \H(h)$, 
\[
\sum_{\ell \ge 0} e^{\beta \ell} \bm_t(\tH_\ell) \le
C_3 \mu_t(H) .
\]
\end{lemma}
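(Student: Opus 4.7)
The plan is to adapt the strategy of Lemma~\ref{lem:uniform P4}, replacing Lebesgue estimates with $m_t$-estimates in order to bound the left-hand side by $\le C\,m_t(H)$, and then to relate $m_t(H)$ to $\mu_t(H)$ via the Gibbs structure. Since $\phi_{\Delta,t}$ vanishes off the top of each column, the reference measure $\bm_t$ is constant along columns and $\bm_t(\tH_{\ell,j})=m_t(X_j)$ whenever the column above $X_j$ at level $\ell$ lies in $\tH$. The Gibbs bounds of Lemma~\ref{lem:nu bounds} (applied without the hole, so with $\lambda=1$) together with the conformal identity $m_t(X_j)\asymp m_t(X)\,e^{S_{\tau_j}\phi_t(y_j)}$ give $\nu_t^0(X_j)\asymp m_t(X_j)$ uniformly in $j$; hence the hole-free density $g_t^0=d\tmu_t/d\bm_t$ is uniformly bounded above and below on $\Delta$, and $\mu_t(H)\asymp\bm_t(\tH)$. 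It therefore suffices to prove $\sum_\ell e^{\beta\ell}\bm_t(\tH_\ell)\le C\,m_t(H)$ together with the lower bound $m_t(H)\lesssim\mu_t(H)$.

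For the upper bound I decompose the sum over admissible pairs $(\ell,j)$ (those with $f^\ell(X_j)\subset H$) into contributions from free and bound entries into $H$, as in the proofs of Lemma~\ref{lem:uniform P4} and Proposition~\ref{prop:G^H_f}. For a free entry at time $\ell$, property (A1)(a) gives $|Df^\ell(y)|\ge(C_d')^{-1}\xi^\ell$, and conformality of $m_t$ together with Lemma~\ref{lem:distortion} yields $m_t(X_j)\le C_d\,\xi^{-t\ell}e^{-\ell p_t}\,m_t(f^\ell X_j)$. Since the free images at time $\ell$ are pairwise disjoint subsets of $H$,
\[
\sum_\ell e^{\beta\ell}\sum_{j\text{ free at }\ell}m_t(X_j)\ \le\ C\,m_t(H)\sum_\ell e^{(\beta-t\log\xi-p_t)\ell}\ \le\ C'\,m_t(H),
\]
the series converging uniformly in $t\in[t_0,t_1]$ because $\beta<t_0\log\xi$ and $|p_t|$ can be made arbitrarily small by shrinking $[t_0,t_1]$.

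For a bound entry at time $\ell$ (having last been free near a critical point $c$ at time $\ell-k$), the standard bound-period estimate $|f^\ell x-f^\ell c|\le\delta_0 e^{-2\vartheta_c k}$ together with $f^\ell x\in H\subset B_\ve(z)$ and \eqref{eq:slow approach} forces $\delta_z e^{-\varsigma\ell}\le\delta_0 e^{-2\vartheta_c k}+\ve$. Since $\varsigma<2\vartheta_c$ by \eqref{eq:varsigma}, this dichotomy (exactly as in the proof of Proposition~\ref{prop:G^H_f}) yields either $\ell\lesssim-\log\ve$ or $k\le\varsigma\ell/(2\vartheta_c)<\ell/2$; in either case the free period of length $\ell-k$ preceding the bound provides expansion $\ge C\xi^{\ell/2}$. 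Applying conformality once more to $X_j$ gives $m_t(X_j)\lesssim\xi^{-t(\ell-k)}e^{-\ell p_t}m_t(f^{\ell-k}X_j)$, and summing against the weight $e^{\beta\ell}$, with the two branches of the dichotomy handled separately, leads to a total bound contribution $\le C\,m_t(H)$ via essentially the same geometric series as in the free case.

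Combining the two contributions gives $\sum_\ell e^{\beta\ell}\bm_t(\tH_\ell)\le C\,m_t(H)$. For the lower bound $m_t(H)\lesssim\mu_t(H)$, \eqref{eq:no holes mixing} and topological mixing produce a bounded $\ell_0\le n(\delta_0)$ and a subcylinder $X_{j_0}\subset X$ with $f^{\ell_0}(X_{j_0})\subset H$ and $m_t(X_{j_0})\asymp|Df^{\ell_0}(y)|^{-t}e^{-\ell_0 p_t}m_t(H)\gtrsim m_t(H)$; this gives $\bm_t(\tH)\ge\bm_t(\tH_{\ell_0,j_0})\gtrsim m_t(H)$, and by the first paragraph $\mu_t(H)\asymp\bm_t(\tH)\gtrsim m_t(H)$, completing the proof. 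The main technical difficulty will be the bound estimate: keeping careful track of distortion and conformality for pieces entering $H$ during bound periods, and verifying that the exponents obtained from combining $\beta$, $\varsigma$, $\vartheta_c$, $p_t$ and $\log\xi$ are all negative uniformly in $t\in[t_0,t_1]$ and $H\in\H(h)$ once $h$ is chosen small.
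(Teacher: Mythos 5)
Your overall skeleton (split into free and bound entries into $H$; relate $\bm_t(\tH)$ to $\mu_t(H)$ via the two-sided bounds on the hole-free density $g^0_t$) matches the paper's, but there are two genuine gaps where the real work of the proof lives. First, your free-piece estimate rests on the claim that ``the free images at time $\ell$ are pairwise disjoint subsets of $H$.'' They are not: distinct one-cylinders $X_j$ that are free at time $\ell$ and satisfy $f^\ell(X_j)\subset H$ can have overlapping images, and the multiplicity of this overlap grows with $\ell$ (each passage through $B_{\delta_0}(\Crit_c)$ can double it). Controlling this complexity growth is precisely the content of the paper's argument, which fixes the entry time $n$, stratifies cylinders by their growth times $s_1,\dots,s_j$ to length $\delta_1$, and shows that the extra expansion $2^{(1+1/p_\delta)k/t}$ accumulated over $k$ critical visits beats the multiplicity $2^k$ (see \eqref{eq:after s_j}), with the cases $j\le \zeta n$ and $j>\zeta n$ handled by the tail estimate and the fixed-fraction-return argument respectively. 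Without this, the factor $m_t(H)$ simply cannot be extracted from the sum over $j$. Relatedly, (A1)(a) does not give $|Df^\ell(y)|\ge (C_d')^{-1}\xi^\ell$ at an intermediate free time $\ell$; it gives expansion from time $\ell$ \emph{forward to the return time}, so the expansion rate you need at free times comes from the Collet--Eckmann machinery of \cite{DHL}, not from (A1)(a).

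Second, the bound-piece estimate cannot be closed by conformality and expansion as you propose: at a bound time $\ell$ the derivative $|Df^\ell(x)|$ may be extremely small (the orbit has just passed near a critical point), and your substitute $m_t(X_j)\lesssim \xi^{-t(\ell-k)}e^{-\ell p_t}m_t(f^{\ell-k}X_j)$ leaves you with the measure of a set near $\Crit_c$, not a subset of $H$, so no factor $m_t(H)$ appears. The paper instead shows that bound entries force either $\ell$ bounded by a constant (those terms are controlled by $\sum_\ell \bmu_t(\tH_\ell)=\mu_t(H)$) or $\ell\ge -\varsigma^{-1}\log(|H|/2\delta_z)$, in which case the tail bound (P1) gives a contribution $\lesssim |H|^{(\alpha-\beta)/\varsigma}$; converting this power of $|H|$ into $m_t(H)$ is exactly where the local scaling $m_t(B_\ve(z))\ge (2\ve)^{s_t+\eps}$ of Lemma~\ref{lem:scale} and the hypothesis $\varsigma<(\alpha-\beta)/s_t$ from \eqref{eq:varsigma} are used. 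Your proposal never invokes Lemma~\ref{lem:scale} or the exponent $s_t$, so this step --- the reason condition \eqref{eq:slow approach} carries the constraint $\varsigma<\alpha/s_t$ at all --- is missing.
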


\begin{lemma}
\label{lem:typical}
Condition \eqref{eq:slow approach} is generic with respect to both $\mu_t$ and $m_t$.
\end{lemma}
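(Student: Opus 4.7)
The plan is a Borel--Cantelli argument. Fix any $\varsigma_0 \in \bigl(0,\min\{2\vartheta_c,\alpha/s_t\}\bigr)$ and, for $\delta>0$, set
\[
A_{n,\delta}:=\bigcup_{c\in \Crit_c} B\bigl(f^n(c),\,\delta e^{-n\varsigma_0}\bigr),\qquad
E_\delta:=\limsup_{n\to\infty} A_{n,\delta}.
\]
A point $z$ admits no $\delta_z>0$ making \eqref{eq:slow approach} hold with $\varsigma=\varsigma_0$ if and only if $z\in\bigcap_{\delta>0}E_\delta$, modulo the countable (hence $m_t$- and $\mu_t$-null) set $\bigcup_{n,c}\{f^n(c)\}$. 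So it suffices to show $m_t(E_\delta)=0$ for every $\delta>0$.

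First I would appeal to Lemma~\ref{lem:scale} to obtain a scaling bound $m_t(B(x,r))\le C r^{s_t}$, valid uniformly for $x$ in a neighbourhood of $\overline{\orb(\Crit_c)}$. Summing over $c\in\Crit_c$ then gives
\[
m_t(A_{n,\delta})\le |\Crit_c|\,C\,\delta^{s_t}\,e^{-n\varsigma_0 s_t},
\]
which is summable in $n$ since $\varsigma_0 s_t>0$. The Borel--Cantelli Lemma thus forces $m_t(E_\delta)=0$ for every $\delta>0$, and intersecting over a sequence $\delta_k\downarrow 0$ shows that the bad set is $m_t$-null; by $\mu_t\ll m_t$ it is also $\mu_t$-null.

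For a $z$ outside the bad set I would extract $\delta_z$ by choosing $k$ with $z\notin E_{\delta_k}$, so that $z\notin A_{n,\delta_k}$ for all $n\ge N=N(z)$, and then setting
\[
\delta_z:=\min\Bigl\{\delta_k,\ \min_{0\le n<N,\,c\in\Crit_c} e^{n\varsigma_0}|f^n(c)-z|\Bigr\}>0,
\]
where the inner minimum is a finite minimum of strictly positive numbers (on the complement of the countable null set above). This $\delta_z$ then realizes the required lower bound for all $n\ge 0$ and $c\in \Crit_c$.

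The main subtlety will be that Lemma~\ref{lem:scale} provides an exponent $s_t$ nominally depending on $z$, whereas the Borel--Cantelli estimate requires $m_t$-control of balls centred at the critical-orbit points $f^n(c)$ rather than at $z$. The expected resolution is that the conformality of $m_t$ together with the bounded distortion of Lemma~\ref{lem:distortion} on the inducing scheme produce a single exponent $s_t$ valid uniformly on a neighbourhood of $\overline{\orb(\Crit_c)}$, so that the estimate above applies directly. If $s_t$ instead depends genuinely on the centre, one stratifies $I$ by level sets of $s_t$ and applies the same argument on each stratum, taking the countable union of the resulting null sets.
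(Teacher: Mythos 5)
Your overall strategy (a first-moment/Borel--Cantelli bound on the union of the shrinking balls around the critical orbit) is the same as the paper's, and your reduction of the bad set to $\bigcap_{\delta}\limsup_n A_{n,\delta}$ modulo the countable orbit set is fine. The gap is exactly at the point you flag as ``the main subtlety,'' and neither of your proposed resolutions works. Lemma~\ref{lem:scale} gives the scaling exponent $s_t$ only at $\mu_t$-typical points $z$ (or at periodic points with bounded density), via the existence of the local dimension; it does not give, and for $t\neq 1$ one cannot expect, a uniform bound $m_t(B(x,r))\le Cr^{s_t}$ for all $x$ in a neighbourhood of $\overline{\orb(\Crit_c)}$ --- the conformal measure $m_t$ is multifractal, and the points $f^n(c)$ are precisely exceptional points at which its scaling is not controlled by the lemma. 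The stratification fallback also fails: the Borel--Cantelli sum needs an upper bound on $m_t(B(f^n(c),r))$, i.e.\ control at the \emph{centres}, and decomposing the ambient space by the local dimension of the generic point $z$ provides no such control (and the level sets form an uncountable family in any case).

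The paper's fix is a doubling trick combined with an Egorov-type uniformization. One first chooses a set $E_{\delta_\varsigma}$ with $m_t(E_{\delta_\varsigma})>1-\eta$ on which \eqref{eq:scale} holds uniformly for all radii below some $\delta_\varsigma$ (this uniformization step is also missing from your argument, since \eqref{eq:scale} is a pointwise a.e.\ statement with $\delta(\eps)$ depending on $z$). One then observes that only those balls $B_{\delta e^{-\varsigma n}}(f^n(c))$ which \emph{intersect} $E_{\delta_\varsigma}$ are relevant, and any such ball is contained in $B_{2\delta e^{-\varsigma n}}(z)$ for some good point $z\in E_{\delta_\varsigma}$, so its measure is at most $(4\delta)^{s_t-\varsigma}e^{-n\varsigma(s_t-\varsigma)}$. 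Summing over $n$ and shrinking $\delta$ bounds the measure of the relevant union by $\eta$, and letting $\eta\to0$ gives genericity with respect to $m_t$, hence with respect to $\mu_t\ll m_t$. With this replacement your argument goes through; without it, the central estimate $m_t(A_{n,\delta})\le C\delta^{s_t}e^{-n\varsigma_0 s_t}$ is unjustified.
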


The three lemmas above are sufficient for the proof of the generic case.  
The next lemma is required in the periodic case.

\begin{lemma}
\label{lem:per}
Suppose that $z$ is a periodic point of period $p$ and $\frac{d\mu_t}{dm_t}(z)\in (0,\infty)$.
Given $n\in \N$ we can choose $h$ so small that if $H\in \H(h)$ then 
\[
\mu_t(H)e^{S_p\phi_t(z)}(1-\gamma(n)) 
\le \int_{\tH} (\Lp_{\phi_t}^n-\Lp_{\phi^H_t}^n) g_0~d\bm_t 
\le \mu_t(H)e^{S_p\phi_t(z)}(1+\gamma(n)) 
\]
where $\gamma(n)\to 0$ as $n\to \infty$ independently of $H \in \H(h)$.
\end{lemma}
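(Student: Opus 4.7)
The plan is to reduce the integral to a purely dynamical quantity on $I$ and then exploit the periodic structure of $z$ to pin down the dominant contribution. Since $\Lp_{\phi_t}^n g_0 = g_0$ and $\Lp_{\phi_t^H}^n g_0 = \Lp_{\phi_t}^n(1_{\hDelta^{n-1}} g_0)$, the $\phi_t$-conformality of $\bm_t$ together with $\pi_*(g_0 \bm_t) = \mu_t$ and $\hDelta^{n-1} = \pi^{-1}(\I^{n-1})$ yields
\[
\int_{\tH}(\Lp_{\phi_t}^n - \Lp_{\phi_t^H}^n) g_0\, d\bm_t = \mu_t(H) - \mu_t(\I^{n-1}\cap f^{-n}H) = \mu_t(E_n),
\]
where $E_n := \{y: f^n y \in H \text{ and } f^k y \in H \text{ for some } 0 \le k < n\}$, the second equality coming from the $f$-invariance $\mu_t(f^{-n}H) = \mu_t(H)$.

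Next I would identify $E_n$ explicitly. For $y \in E_n$ set $k^* = \max\{k < n : f^k y \in H\}$ and $m = n - k^*$, so that $f^{k^*}y \in H \cap f^{-m}H$ and $f^j y \notin H$ for $k^* < j < n$. The key structural claim, valid for $h$ small enough depending on $n$, is that $H \cap f^{-m}H = \emptyset$ whenever $m \in \{1,\dots,n\}$ is not a multiple of $p$, while for $m = jp$ with $j \ge 1$ the intersection reduces to its connected component containing $z$, and these components form the nested chain $H \cap f^{-p}H \supset H \cap f^{-2p}H \supset \cdots$ by the hyperbolicity $|Df^p(z)| > 1$. Emptiness for $m$ not divisible by $p$ follows from $f^m z \ne z$: then $y$ near $z$ forces $f^m y$ near $f^m z$ at positive distance from $z$, and the finitely many off-orbit preimages of $z$ under $f^m$ (for $m \le n$) lie at positive distance from $z$, so one may take $h$ smaller than all these distances. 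Granted the claim, if $m = jp$ with $j \ge 2$ then $f^{k^*}y \in H \cap f^{-jp}H \subset H \cap f^{-p}H$ by nesting, forcing $f^{k^*+p}y \in H$ with $k^* + p < n$ and contradicting maximality of $k^*$. Thus $m = p$, $E_n = f^{-(n-p)}(H \cap f^{-p}H)$, and $\mu_t(E_n) = \mu_t(H \cap f^{-p}H)$ by invariance.

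Finally I would evaluate $\mu_t(H \cap f^{-p}H)$. The connected component of $H \cap f^{-p}H$ around $z$ is mapped diffeomorphically onto $H$ by $f^p$ with bounded distortion, so $\phi_t$-conformality of $m_t$ gives $m_t(H \cap f^{-p}H) = m_t(H) e^{S_p \phi_t(z)}(1 + o(1))$ as $\diam H \to 0$; combined with the continuity and positivity of $d\mu_t/dm_t$ at $z$ (from condition (P)), this upgrades to $\mu_t(H \cap f^{-p}H) = \mu_t(H) e^{S_p \phi_t(z)}(1 + o(1))$. Given $n$, one picks $h(n)$ small enough to validate the identification of $E_n$ for every $m \le n$ and to bound the conformality error by $\gamma(n)$; taking $h(n) \to 0$ yields $\gamma(n) \to 0$, which proves the lemma. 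The main obstacle is the emptiness claim: as $n$ grows, off-orbit preimages of $z$ become dense in $I$, so $h(n)$ must be shrunk; however, for each fixed $n$ only finitely many preimages are relevant and all sit at positive distance from $z$, so the requisite smallness of $h(n)$ is attainable and simultaneously drives the conformality error to zero.
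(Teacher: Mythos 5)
Your argument is correct and arrives at the same conclusion, but its execution is genuinely different from (and somewhat cleaner than) the paper's. The paper stays on the tower throughout: it interprets $\int_{\tH}(\Lp_{\phi_t}^n-\Lp_{\phi^H_t}^n)g^0_t\,d\bm_t$ as the $\mu_{\Delta,t}$-mass of points hitting $\tH$ at time $n$ and at some earlier time, then counts column by column how many levels project into $H$ (using that, for $h$ small depending on $n$, consecutive hits within a column are exactly $p$ apart and that an orbit cannot return to $\Delta_0$ within $n$ steps of entering $\tH$), and matches this tally against the analogous level count for $H\cap f^{-p}H$; the two counts agree only up to a tail term of order $\mu_t(H\cap f^{-p\lfloor n/p\rfloor}H)$, which must then be shown to be exponentially small in $n$. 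You instead push everything down to $I$ immediately, identify the defect with $\mu_t(E_n)$, and your maximal-previous-entry decomposition together with the nesting $H\cap f^{-jp}H\subset H\cap f^{-p}H$ yields the exact identity $\mu_t(E_n)=\mu_t(H\cap f^{-p}H)$ --- no residual tail term and no need for the ``cannot return to the base'' step. Both proofs finish identically, via conformality of $m_t$ along the local inverse branch of $f^{p}$ at $z$ and the value of $d\mu_t/dm_t$ at $z$. Two minor caveats, shared equally by the paper's proof: the emptiness/nesting claim genuinely uses the symmetry of $H$ about $z$ when $Df^p(z)<0$ (for a strongly asymmetric interval containing $z$ one can have $y\in H$, $f^py\notin H$, $f^{2p}y\in H$), which is harmless since Theorem~\ref{thm:zero hole} concerns $H_\ve=(z-\ve,z+\ve)$; and upgrading $m_t(H\cap f^{-p}H)/m_t(H)\to e^{S_p\phi_t(z)}$ to the corresponding statement for $\mu_t$ uses that $z$ is a Lebesgue point of the density, slightly more than the bare finiteness of $\frac{d\mu_t}{dm_t}(z)$.
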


\begin{proof}[Proof of Theorem~\ref{thm:zero hole}]
{\em Non-periodic case.}
Fix $\ve >0$ and choose $h>0$ so small that $\lambda^H_t \ge \sigma_0^{1/2}$
for all $H \in \H(h)$.  Next choose $n$ so large that $\sigma_0^{n/2} < \ve$.  Finally,
choose $H$ sufficiently small that $(\lambda^H_t)^{-n} \le 1+ \ve$ and
$f_\Delta^{-k}\tH \cap \tH = \emptyset$ for all $1 \le k \le n$.  This last choice
is possible using the aperiodicity of $z$.

Note that due to this last choice, we have $\Lp^n_{\phi^H_t} = \Lp^n_{\phi_t}$
when integrated over $\tH$.  Now the first term on the right hand side of
\eqref{eq:hole} becomes
\[
\begin{split}
(\lambda^H_t)^{-n} & \int_{\tH} \Lp_{\phi_t}^n (g_t^H - g_t^0) \, d\bm_t
\le (\lambda^H_t)^{-n} \sum_{\ell \ge 1} e^{\beta \ell} \| \Lp_{\phi_t}^{n} g_t^H - g_t^0 \|_\B \, \bm_t(\tH_\ell) \\
& \le C_2 \sigma_0^{n/2} \| g^H_t \|_\B \| g^0_t \|_\B \sum_{\ell \ge 1} e^{\beta \ell} \, \bm_t(\tH_\ell) 
\le C_2 \ve \| g_t^H \|_\B \| g_t^0 \|_\B \, \mu_t(H),
\end{split}
\]
where we have used Lemmas~\ref{lem:spectral gap}-\ref{lem:typical}
as well as the fact that  $g^0_t$ is uniformly bounded above and below on $\Delta$
(see \cite[Prop.~2.4]{BDM}). 
Note that  in the application of Lemma~\ref{lem:typical}, 
we use the fact that $c_0(g^H_t) = \int_{\Delta(H)} g^H_t \, dm_t = 1$.
Now $\| g_t^H \|_\B$ and  $\| g_t^0 \|_\B$ are uniformly bounded for all
$H \in \H(h)$ with constants depending only on (P1)-(P4) \cite[Prop.~2.3]{BDM}.
Thus the first term of \eqref{eq:hole} can be made an arbitrarily small multiple of $\mu_t(H)$.

The second term of \eqref{eq:hole} is simply bounded by
\begin{equation}
\label{eq:easy}
\int_{\tH} g^0_t \, d\bm_t \le (\lambda^H_t)^{-n} \int_{\tH}  g_t^0 \, d\bm_t
\le (1+\ve) \int_{\tH} g^0_t \, d\bm_t .
\end{equation}
Since $\int_{\tH} g^0_t \, d\bm_t = \bmu_t(\tH) = \mu_t(H)$, we have shown that
\[
\frac{1-\lambda^H_t}{\mu_t (H)} = 1 + \epsilon(H),
\]
where $\epsilon(H) \to 0$ as $m_t(H) \to 0$.  

{\em Periodic case.}
We split the first term on the right side of \eqref{eq:hole} into two and renormalize $g^0_t$
by $c_H(g^0_t)$:
\begin{equation}
\label{eq:periodic split}
\begin{split}
1 - \lambda^H_t 
& = (\lambda^H_t)^{-n} c_H(g^0_t)^{-1} \int_{\tH} \Lp_{\phi_t^H}^n ( c_H(g^0_t) g_t^H - g_t^0) \, d\bm_t \\
& \; \; \; + (\lambda^H_t)^{-n} c_H(g^0_t)^{-1} \int_{\tH} (\Lp_{\phi_t^H}^n - \Lp_{\phi_t}^n) g_t^0 \, d\bm_t 
   + (\lambda^H_t)^{-n} c_H(g^0_t)^{-1} \int_{\tH} g_t^0 \, d\bm_t  .
\end{split}
\end{equation}

As before, we fix $\ve >0$ and choose $n$ so large that $\sigma_0^{n/2} < \ve$ and
$\gamma(n)$ from Lemma~\ref{lem:per} is less than $\ve$.
Next choose $h>0$ so small that $\lambda^H_t \ge \sigma_0^{1/2}$ and 
$(\lambda^H_t)^{-n} \le 1+ \ve$
for all $H \in \H(h)$.  Finally, since $c_0(g^0_t) =1$, by Lemma~\ref{lem:spectral gap},
we may shrink $h$ further so that $c_H(g^0_t) \in (1-\ve, 1+\ve)$
and for our given $n$, the holes are 
small enough that the conclusion of Lemma~\ref{lem:per} holds.

Using Lemmas~\ref{lem:spectral gap} and \ref{lem:uniform hole} again,  the first term in
\eqref{eq:periodic split} is bounded by
\[
\left| (\lambda^H_t)^{-n} c_H(g^0_t)^{-1} \int_{\tH} \Lp_{\phi_t^H}^n (c_H(g^0_t) g_t^H - g_t^0) \, d\bm_t \right|
 \le C \ve \| g_t^H \|_\B \| g_t^0 \|_\B \, \mu_t(H) .
\]
Using Lemma~\ref{lem:per}, the second term in \eqref{eq:periodic split} is bounded above and below by
\[
 - \mu_t(H)e^{S_p \phi_t(z)} \frac{(1+ \ve)^2}{1-\ve}
\le \frac{(\lambda^H_t)^{-n}}{c_H(g^0_t)} \int_{\tH} (\Lp_{\phi_t^H}^n - \Lp_{\phi_t}^n) g_t^0 \, d\bm_t 
\le - \mu_t(H) e^{S_p \phi_t(z)} \frac{1-\ve}{1+\ve} .
\]
Finally, the third term of \eqref{eq:periodic split} is bounded above and below as in 
\eqref{eq:easy}.

Putting these three estimates together, we conclude,
\[
1- e^{S_p \phi_t(z)} - C \ve \le \frac{1 - \lambda^H_t }{\mu_t(H)} \le 1- e^{S_p \phi_t(z)} + C \ve ,
\]
for a constant $C$ independent of $H$.   Since $\ve$ is arbitrary, this completes the
proof of the theorem.
\end{proof}

The proof of Theorem~\ref{thm:zero hole adapted} requires some minor adaptations of our preparatory lemmas, so we will leave its proof until we have proved those lemmas.


\subsection{Proofs of generic lemmas}
\label{lemma proofs}

\begin{proof}[Proof of Lemma~\ref{lem:spectral gap}]
The proof follows from the fact that for the transfer operator without the hole
$\Lp_{\phi_t}$, $e^{-\beta}$ gives an upper bound on the second largest eigenvalue as well
as a bound on the essential spectral radius. This is proved in \cite[Theorem 1.4 and Section 4.1]{maume}.  There it is
shown that in our setup (since $\beta$ is very close to 0), a constructive bound on 
the second largest eigenvalue is given by $\tanh (\mathcal{R}/2)$ where
$\mathcal{R} = \log \frac{1+e^{-\beta}}{1-e^{-\beta}}$.  Simplifying this expression yields
$\tanh (\mathcal{R}/2) = e^{-\beta}$.

By our Proposition~\ref{prop:uniform tails}, we have uniform control of the tails of the return time $\tau$ 
for all $H \in \mathcal{H}(h)$ and $t \in [t_0,t_1]$.  Thus we work with a fixed $\beta>0$
(chosen in Section~\ref{prep}) 
in all our towers which 
gives a uniform bound on the second largest eigenvalue of $\Lp_{\phi_t}$.

The fact that the spectrum and spectral projectors
of $\Lp_{\phi^H_t}$ are close to that of $\Lp_{\phi_t}$ outside the
disk of radius $e^{-\beta}$ follows from \cite[Lemma 3.6]{demers wright}
(which in turn is an application of \cite{keller liverani} adapted to sequences
of Young tower constructions).  There,
it is shown that the eigenvalues of $\Lp_{\phi^H_t}$ and $\Lp_{\phi_t}$ outside the
disk of radius $e^{-\beta}$ vary by at most $\mathcal{O}(h^\epsilon)$ 
for all $H \in \H(h)$ and some $\epsilon >0$.

Choosing $h$ sufficiently small, we may guarantee that $\lambda^H_t \ge e^{-\beta/3}$
and the second largest eigenvalue of $\Lp_{\phi^H_t}$ is at most $e^{-2\beta/3}$.
This ensures that $\sigma_0 <1$ in the statement of the lemma is at most
$e^{-\beta/3}$ for all $H \in \H(h)$.

In addition, letting $\Pi_{\lambda^H}$ and $\Pi_1$ denote the
spectral projections onto the eigenspaces associated with $g^H_t$ and $g^0_t$, respectively,
the same perturbative results from \cite{demers wright}  used above imply that
for $\psi \in \B$,
\[
\begin{split}
|c_H(\psi) - c_0(\psi)| & = \left| \int_{\Delta(H)} \Pi_{\lambda^H} \psi - \Pi_1 \psi \, dm_t \right| \\
& \le \|  \Pi_{\lambda^H} \psi - \Pi_1 \psi \|_\B \sum_\ell m_t(\Delta_\ell) e^{\beta \ell}
\le C h^\epsilon,
\end{split}
\]
where $C$ is independent of $H \in \H(h)$, proving the continuity of $c_H(\psi)$
 in $H$.
\end{proof}

Before giving the proof of Lemma~\ref{lem:uniform hole}, we need the following important fact
about the scaling of the measure $m_t$ on small sets.  

\begin{lemma}
\label{lem:scale}
Suppose that $z$ is either a $\mu_t$-typical point or a periodic point with $\frac{d\mu_t}{dm_t}(z)\in (0,\infty)$ and set $s_t:=t+\frac{p_t}{\lambda(z)}$.  Then for each $\eps>0$, there exists $\delta(\eps)>0$ such that  for all  $\delta\in (0, \delta(\eps))$, 
\begin{equation}
(2\delta)^{s_t+\eps}\le m_t(B_\delta(z))\le (2\delta)^{s_t-\eps}.
\label{eq:scale}
\end{equation}
Moreover, $s_t>0$, and in the case that $z$ is $\mu_t$-typical then $s_t$ can also be written as $\frac{h(\mu_t)}{\lambda(\mu_t)}$, which is $<1$ whenever $t\neq 1$.
\end{lemma}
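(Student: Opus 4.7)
The plan is to exploit the conformality of $m_t$ by iterating $f$ forward until $B_\delta(z)$ reaches a macroscopic scale, where Lemma~\ref{lem:large scale new} gives a uniform lower bound on its $m_t$-measure, and then reading off the scaling from the chain rule. Fix a small scale $\delta_0>0$ (eventually to be chosen smaller than the $\delta$ appearing in (C1)-(C2)) and let $n=n(\delta)$ be the first time $|Df^n(z)|\cdot \delta \ge \delta_0$. In the $\mu_t$-typical case, Birkhoff's theorem applied to $\log|Df|$ yields $\tfrac1n \log|Df^n(z)| \to \chi(\mu_t)$, so $\lambda(z)=\chi(\mu_t)$ exists and is positive by (C2) and \cite{BrvS}; in the periodic case $\lambda(z)=\tfrac1p\log|Df^p(z)|>0$ by hyperbolicity. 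In both cases
\[
n(\delta) \;=\; \frac{-\log(2\delta)}{\lambda(z)}\bigl(1+o(1)\bigr) \qquad\text{as }\delta\to 0.
\]

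The next step is to control the distortion of $f^{n(\delta)}$ on $B_\delta(z)$. For periodic $z$ this is automatic: the finite orbit stays at a fixed positive distance from $\Crit_c$, so a sufficiently small neighborhood avoids $B_{\delta_0}(\Crit_c)$ along its first $n(\delta)$ iterates, and the $C^2$ smoothness of $f$ gives a distortion factor tending to $1$ as $\delta\to 0$. For $\mu_t$-typical $z$, I would use that $\log\dist(\cdot,\Crit_c)$ is $\mu_t$-integrable (a standard consequence of the Collet--Eckmann condition and the tail bound of Proposition~\ref{prop:uniform tails}), so Birkhoff gives $\dist(f^k z,\Crit_c)>e^{-\eta k}$ for all large $k$, with $\eta$ arbitrarily small. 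Combined with (C1), a standard Koebe-type argument then shows that $f^{n(\delta)}$ maps $B_\delta(z)$ diffeomorphically onto an interval of length comparable to $\delta_0$, with distortion bounded by a factor $1+o(1)$ as $\delta\to 0$.

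With these in hand, the $(\varphi_t-p_t)$-conformality of $m_t$ gives
\[
m_t\bigl(f^{n(\delta)}(B_\delta(z))\bigr) \;=\; \int_{B_\delta(z)} |Df^{n(\delta)}|^{t}\,e^{n(\delta) p_t}\,dm_t
\;\asymp\; |Df^{n(\delta)}(z)|^{t}\,e^{n(\delta) p_t}\,m_t(B_\delta(z)),
\]
and Lemma~\ref{lem:large scale new} gives $m_t(f^{n(\delta)}(B_\delta(z)))\asymp 1$ uniformly in $\delta$. Rearranging, inserting $|Df^{n(\delta)}(z)|=e^{n(\delta)\lambda(z)(1+o(1))}$, and using $e^{-n(\delta)\lambda(z)}\asymp 2\delta$, one obtains
\[
m_t(B_\delta(z)) \;\asymp\; e^{-n(\delta)(t\lambda(z)+p_t)(1+o(1))} \;=\; e^{-n(\delta)\lambda(z)\,s_t(1+o(1))} \;\asymp\; (2\delta)^{s_t(1+o(1))},
\]
which yields the two-sided bound for $\delta$ below a threshold depending on $\ve$.

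For the supplementary claims: the identity $s_t=h(\mu_t)/\lambda(\mu_t)$ in the generic case is immediate from the variational equality $p_t=h(\mu_t)-t\chi(\mu_t)$ combined with $\lambda(z)=\chi(\mu_t)$. Positivity $s_t>0$ reduces to $h(\mu_t)>0$ for $\mu_t$-typical $z$, which holds on $[t_0,t_1]$ by continuity of $t\mapsto h(\mu_t)$ near $t=1$ where $h(\mu_1)>0$; in the periodic case one applies the variational principle to $\mu_1$ to get $p_t\ge h(\mu_1)-t\chi(\mu_1)$, hence $t\lambda(z)+p_t\ge h(\mu_1)+t(\lambda(z)-\chi(\mu_1))>0$ after possibly shrinking the interval $[t_0,t_1]$. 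Finally, $s_t<1$ for $t\neq 1$ in the generic case is Hofbauer's formula $\dim_H(\mu_t)=h(\mu_t)/\chi(\mu_t)$ (cited as \cite{Hofdim}) combined with the fact that $\mu_t$ is singular with respect to Lebesgue for $t\neq 1$ (by the uniqueness of the a.c.\ invariant probability measure and Ledrappier's characterization \cite{Led81}), which forces $\dim_H(\mu_t)<1$. The main obstacle is the Koebe-type distortion step in the generic case, where balancing the shrinking of $B_\delta(z)$ against the subexponential approach of the orbit to $\Crit_c$ requires a careful choice of the parameter $\eta$ relative to $\ve$.
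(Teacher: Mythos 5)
Your treatment of the periodic case is essentially the paper's: there the orbit of $z$ is finite and bounded away from $\Crit_c$, so one may fix $\hat\delta$ with bounded distortion for $f^p$ on $B_{\hat\delta}(z)$ and pull back/push forward with conformality; your computation reproduces the paper's. The ``moreover'' claims are also handled in essentially the same way (equilibrium identity $p_t=h(\mu_t)-t\chi(\mu_t)$, Ledrappier's characterization of dimension~$1$ measures, continuity of $p_t$ near $t=1$ for positivity).

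The gap is in the $\mu_t$-typical case. Your scheme pushes $B_\delta(z)$ forward for $n(\delta)\approx -\log(2\delta)/\lambda(z)$ iterates and needs $f^{n(\delta)}$ to be injective with distortion $1+o(1)$ on $B_\delta(z)$. But for a typical point of a Collet--Eckmann map this cannot be extracted from (C1) together with $\dist(f^kz,\Crit_c)>e^{-\eta k}$: the image $f^k(B_\delta(z))$ has length of order $|Df^k(z)|\,\delta\approx e^{k(\lambda\pm o(1))}\delta$, so the condition ``the image is smaller than the distance to $\Crit_c$'' only holds for $k\lesssim -\log(2\delta)/(\lambda+\eta)$, leaving a window of length proportional to $|\log\delta|$ (times $\eta/\lambda$) before time $n(\delta)$ during which the image may engulf a critical point. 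Once that happens, injectivity, the conformality identity $m_t(f^nA)=\int_A|Df^n|^te^{np_t}\,dm_t$, and the comparison $\sup_{B_\delta(z)}|Df^n|\asymp|Df^n(z)|$ all fail; and stopping the iteration earlier leaves an image of size $\delta^{\eta/(\lambda+\eta)}\to 0$, to which Lemma~\ref{lem:large scale new} does not apply, making the argument circular. This is exactly why the literature (and the paper) handles typical points in the \emph{pullback} direction via the natural extension: the paper simply cites the a.e.\ local dimension formula $d_{\mu_t}=h(\mu_t)/\lambda(\mu_t)$ from \cite{Hofdim} (whose proof uses the natural extension and Shannon--McMillan--Breiman, in the spirit of Theorem~\ref{thm:dobbs}), and then transfers from $\mu_t$ to $m_t$ using that the density at typical $z$ lies in $(0,\infty)$ (only the logarithmic order matters, so this costs nothing at the level of local dimension). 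To repair your proof you would either need to invoke that local dimension theorem as the paper does, or replace the forward-Koebe step by a genuine Pesin-type construction of pullback neighborhoods with uniform distortion.
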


\begin{proof}
In the case that $z$ is $\mu_t$-typical, \eqref{eq:scale} follows immediately from the definition of local dimension $d_{\mu_t}$ at $x$, where
$$d_{\mu_t}(x)=\lim_{\delta\to 0} \frac{\log\mu_t(B_\delta(x))}{\log 2\delta},$$
whenever the limit exists.  For $\mu_t$-typical points this is always equal to the dimension of the measure  $\frac{h(\mu_t)}{\lambda(\mu_t)}$, see \cite{Hofdim}.  So $s_t=\frac{h(\mu_t)}{\lambda(\mu_t)}=t+\frac{p_t}{\lambda(\mu_t)}=t+\frac{p_t}{\lambda(z)}$.  By the Ruelle-Pesin formula, as well as the obvious fact that the dimension of a measure is bounded by the dimension of the space, $s_t\le 1$.  Moreover, since the unique invariant measure which has dimension 1 is $\mu_1$, we have $s_t<1$ whenever $t\neq 1$, see for example \cite{Led81}.  We can make the switch from the invariant measure $\mu_t$ to the conformal measure $m_t$ using the fact that the density at typical $z$ exists and takes a value in $(0, \infty)$.

In the case when $f^q(z)=z$, first notice that elementary arguments on the pressure function imply that  for any $t$ where an equilibrium state of positive entropy $\mu_t$ exists, $p_t>-t\lambda(z)$ for any periodic point $z$.  Therefore, $s_t$ in this case is strictly positive.  Moreover, $S_q\phi(z)<0$.

For the scaling properties of $m_t$ around $z$, the situation is in many ways simpler than the typical case, although we can't call on the powerful theory of local dimension described above.  The proof is similar to \cite[Lemma 4.1]{FreFreTod13} so we only sketch it.   We use the fact that we can pick $\hat\delta>0$ such that on $B_{\hat\delta}(z)$, $|Df^q|\sim  |Df^q(z)|$,
where $\sim$ denotes a uniform constant depending only on $\hat\delta$ and 
the distortion constant
$C_d$ from (A1).  
Then for a ball of size $\delta\in (0, \hat\delta)$, conformality implies that for $n=\lfloor \frac{\log(\hat\delta/\delta)}{\log|Df^q(z)|}\rfloor$,
$$m_t(B_\delta(z))\sim |Df^{nq}(z)|^{-t}e^{-nqp_t} m_t(B_{\hat\delta}(z)) \sim \delta^te^{-nqp_t}.$$

Hence 
$$\lim_{\delta\to 0} \frac{\log m_t(B_\delta(x))}{\log 2\delta}=t+\frac{qp_t}{\log|Df^q(z)|}=t+\frac{p_t}{\lambda(z)},$$
as required.
\end{proof}

\begin{proof}[Proof of Lemma~\ref{lem:uniform hole}]
Holes in $\Delta$ are created in one of two ways:  When
$f^nX$ encounters $H$ during a bound period or when it is free.  We split
the relevant sum into these pieces,
\[
\sum_\ell \bm_t(H_\ell) e^{\beta \ell} = \sum_{\mbox{\scriptsize bound}} \bm_t(H_\ell) e^{\beta \ell}
+ \sum_{\mbox{\scriptsize free}} \bm_t(H_\ell) e^{\beta \ell} .
\]

\noindent
{\em Estimate on bound pieces.}  
Since $\bm_t(H_\ell) = \bm_t(f_\Delta^{-\ell}H_\ell)$, we will estimate the sum
over all $1$-cylinders $X^i_\ve$ such that $f^\ell(X^i_\ve) \subset H$ and $X^i_\ve$ is bound
at time $\ell$, $\ell < \tau(X^i_\ve)$.

If $X^i_\ve$ is bound at
time $\ell$, then $|f^\ell(x) - f^\ell(c)| \le \delta_0 e^{-2\vartheta_c \ell}$, for each $x \in X^i_\ve$.
Thus fixing $x \in X^i_\ve$ and using \eqref{eq:slow approach}, we obtain
\[
\delta_z e^{- \varsigma \ell} \le |f^n(c) - z| \le |f^n(c) - f^n(x)| + |f^n(x) - z|
\le \delta_0 e^{- 2\vartheta_c \ell} + |H|/2 ,
\]
Since $\varsigma < 2 \vartheta_c$ by \eqref{eq:varsigma}, 
this inequality can only be satisfied by sufficiently large $\ell$
and if $\delta_z < \delta_0$, by finitely many small values of $\ell$ as well.
So we assume the worst case scenario, that $\delta_z< \delta_0$.
The finitely many $\ell$ must satisfy 
$\ell \le \frac{\log(\delta_0/\delta_z)}{2\vartheta_c - \varsigma}$.
On the other hand, the sufficiently large $\ell$ must satisfy,
\[
\delta_z e^{-\varsigma \ell} \le |H|/2 \implies \ell \ge \frac{- \log(|H|/2\delta_z)}{\varsigma} .
\]

Putting these estimates together, we have the following estimate on the contribution
from bound pieces,
\begin{equation}
\label{eq:bound est}
\begin{split}
\sum_{\mbox{\scriptsize bound}} \bm_t(H_\ell) e^{\beta \ell}
& \le \sum_{\ell \le  \frac{\log(\delta_0/\delta_z)}{2\vartheta_c - \varsigma}}
\bm_t(H_\ell) e^{\beta \ell}
+ \sum_{\ell \ge  \frac{- \log(|H|/2\delta_z)}{\varsigma}}
\bm_t(H_\ell) e^{\beta \ell} \\
& \le C e^{\beta \frac{\log(\delta_0/\delta_z)}{2\vartheta_c - \varsigma}} \sum_\ell \bmu_t(H_\ell)
+ \sum_{\ell \ge  \frac{- \log(|H|/2\delta_z)}{\varsigma}} Ce^{-(\alpha-\beta) \ell} \\
& \le C e^{\beta \frac{\log(\delta_0/\delta_z)}{2\vartheta_c - \varsigma}} \mu_t(H)
+ C \delta_z^{(\beta-\alpha)/\varsigma} |H|^{(\alpha-\beta)/\varsigma} ,
\end{split}
\end{equation}
where we have used the fact that $\bmu_t$ has density with respect to $\bm_t$
uniformly bounded above and below on $\Delta$ in the last line.  By \eqref{eq:varsigma}, 
the exponent of $|H|$ in the last term is greater than $s_t = t + \frac{p_t}{\lambda(z)}$
from Lemma~\ref{lem:scale}.  So remembering that $H_\ve = B_\ve(z)$
and by choosing $h$ sufficiently small, we have 
$|H|^{(\alpha-\beta)/\varsigma} \le m_t(H) \le C\mu_t(H)$, where we have used that
the density of $\mu_t$ is bounded away from 0.
This completes the estimate on bound pieces.

\noindent
{\em Estimate on free pieces.}  For free pieces, we adapt the estimates in
\cite[Section 6]{DHL} and their modification due to the extra cutting by $\partial H$ in
\cite[Lemma 4.5]{BDM}.  We fix $n$ and estimate the mass of one-cylinders
$\omega = X_i$ which are free when they enter $H$ for the first time at time $n$. 

According to the construction in \cite{DHL},
each one-cylinder $\omega$ is contained in a sequence of nested intervals 
$\omega \subset \omega^{(j)} \subset \omega^{(j-1)} \subset \cdots \subset \omega^{(1)}$
and corresponding times $s_1, \ldots, s_j$ such that $|f^{s_i}(\omega^{(i)})| \ge \delta_1$.
Thus there is $\delta_t >0$ such that $m_t(f^{s_i}(\omega^{(i)})) \ge \delta_t$
for each $i = 1, \ldots, j$.  We call the times $s_i$ {\em growth times} for $\omega$.

Define $E^n_j$ to be the set of one-cylinders $\omega$ such that $f^n\omega \subset H$
for the first time, $f^n\omega$ is free,
and $\omega$ belongs to an interval which grows to fixed length $\delta_1$ precisely $j$
times before time $n$.  Then 
\[
\sum_{\substack{f^n\omega \subset H \\ \mbox{\scriptsize free}}} m_t(\omega) e^{\beta n}  
= \sum_{\substack{\omega \in E^n_j \\ j \le \zeta n}} m_t(\omega) e^{\beta n} + \sum_{\substack{\omega \in E^n_j \\ \zeta n < j \le n}} m_t(\omega) e^{\beta n},
\]
where $0 < \zeta < 1$ is determined below.

For $j \le \zeta n$, we follow the proof of \cite[Lemma 10]{DHL} and \cite[Lemma 4.5]{BDM}
to define $\{ s_1 = r_1 \}$ as the set of points for which the first growth to 
length $\delta_1$ occurs at time
$r_1$.  We have by Proposition~\ref{prop:uniform tails}, 
\[
m_t(s_1 = r_1) \le C_0e^{-\alpha r_1} \le \frac{C_0}{\delta_t} e^{-\alpha r_1} m_t(X) .
\]
We then repeat this estimate on each element $f^{r_1}\omega$, which has $m_t$-measure at least 
$\delta_t$ by definition of $r_1$.  Thus
\[
m_t(x \in f^{r_1}\omega) : s_1 \ge r_2 ) \le C_0 e^{-\alpha r_2} \le \frac{C_0}{\delta_t} e^{-\alpha r_2}
m_t(f^{r_1}\omega) .
\]
By bounded distortion, this comprises a comparable fraction of the set in $\omega$, and thus
\[
m_t(s_2 = r_1 + r_2 : s_1 = r_1 ) \le \frac{C_0^2 D_\delta}{\delta_t^2} e^{-\alpha(r_1+r_2)} ,
\]
where $D_\delta$ is the distortion constant.
Iterating this $j$ times, we have
\begin{equation}
\label{eq:growth times}
m_t(s_j = r_1 + \cdots r_j : s_1 = r_1, \ldots, s_{j-1} - s_{j-2} = r_{j-1}) \le \frac{C_0^j D_\delta^{j-1}}{\delta_t^j} e^{-\alpha s_j} m_t(X).
\end{equation}

Next we focus on the intervals $f^{s_j}\omega$ which lie inside $f^{s_j}\omega^{(j)}$ for a fixed 
$\omega^{(j)}$.   In particular, we need to control the increase in complexity between
times $s_j$ and $n$, \ie the number of subintervals of $f^{s_j}\omega^{(j)}$ that will
overlap when they enter $H$ at time $n-s_j$.

In $n-s_j$ iterates, $f^{s_j}\omega$ will enter $H$ for the first time.
Along the way, due to the definition of $s_j$, 
$f^i(f^{s_j}\omega^{(j)})$ cannot grow to length greater than $\delta_1$ or 
have a piece that makes a return to $X$, for $i=1, \ldots, n-s_j$.  
If $f^i(f^{s_j}\omega^{(j)}) \subset B_\delta(c)$ for some $c \in \Crit_c$, then a doubling may occur
creating an overlap of subintervals in $f^{s_j}\omega^{(j)}$ when they enter $H$.  We need to show that the expansion gained from time $s_j$ to time $n$ is sufficient to
overcome this growth in complexity.

Let $p_\delta$ denote the minimum length of a bound period for $x \in B_\delta(c)$.
According to \cite[Lemma 2]{DHL}, for $x \in B_\delta(c)$, we have $|Df^{p+1}(x)| \ge \kappa^{-1} e^{\theta(p+1)}$ when $x$ reaches the end of its bound period of length $p$, where
\[
\theta = \frac{\Lambda - 5 \vartheta_c \ell_c}{2 \ell_c} - \frac{\Lambda - 5 \vartheta_c \ell_c}{2 \ell_c p_\delta} > 0.
\]
Note that the second term can be made arbitrarily small by choosing $\delta$ to be small (and
therefore $p_\delta$ large).  We choose $p_\delta$ sufficiently large that 
$e^{\theta p_\delta} > 2^{1/t}$.  Define $\bar\theta = \theta - \frac{\log 2}{t p_\delta}$.

Now suppose that $\omega \subset \omega^{(j)}$ makes $k$ visits to $B_\delta(c)$ between times
$s_j$ and $n$ and is free at time $n$.   
Then concatenating the expansion from \cite[Lemma 2]{DHL}
and (C1), we have for $x \in f^{s_j}\omega$,
\[
|Df^{n-s_j}(x)| \ge \kappa \delta^{\ell_{\max} -1} e^{\bar \gamma (n-s_j)} 
2^{(1+\frac{1}{p_\delta})\frac{k}{t}},
\]
where $\bar\gamma = \min \{ \gamma, \bar\theta \}$.

Since the complexity of $f^i(f^{s_j}\omega^{(j)})$ increases at most by a factor of 2 with
each entry into $B_\delta(c)$, we fix $\omega^{(j)}$ and let $A_k$ denote those 
$\omega \in E^n_j$, $\omega \subset \omega^{(j)}$, that make $k$ visits to $B_\delta(c)$
between times $s_j$ and $n$.  Then
\begin{equation}
\label{eq:after s_j}
\begin{split}
\sum_{\substack{\omega \subset \omega^{(j)} \\ \omega \in E^n_j}} m_t(f^{s_j} \omega) 
& \le \sum_{k=0}^{n-s_j} 
\sum_{\omega \in A_k} C 2^{-(1+ \frac{1}{p_\delta})k} e^{-\bar\gamma (n-s_j)t} e^{p_t(n-s_j)}
m_t(f^n\omega) \\
& \le \sum_{k=0}^{n-s_j} C 2^k 2^{-(1+ \frac{1}{p_\delta})k} e^{-\bar\gamma (n-s_j)t} e^{p_t(n-s_j)}
m_t(H) \\
& \le Ce^{(n-s_j)(-\bar\gamma t + p_t)} m_t(H),
\end{split}
\end{equation}
for some $C >0$. 

Then since $m_t(f^{s_j}\omega^{(j)}) \ge \delta_t$, we iterate use \eqref{eq:after s_j} to
iterate \eqref{eq:growth times} one more time to obtain,
\[
m_t(s_1 = r_1, s_2 -s_1 = r_2, \ldots, n-s_j = r_{j+1}) \le \frac{C_0^{j+1} D_\delta^{j}}{\delta_t^{j+1}} e^{-\theta_1 n} m_t(H),
\]
where $\theta_1 = \min \{ \bar \gamma t - p_t, \alpha \}$.
Summing over all possible $(j+1)$-tuples such that $\sum_i r_i =n$, we use the same
combinatorial argument as in \cite[Lemma 7]{DHL} to bound their number by $e^{\eta n}$,
where $\eta$ can be made as small as we like by choosing $\zeta$ 
sufficiently small (but holding $\delta$ fixed, which allows us to hold the distortion constant
fixed).  Thus,
\begin{equation}
\label{eq:less than zeta}
\sum_{\substack{\omega \in E^n_j \\ j \le \zeta n}} m_t(\omega) e^{\beta n}
\le \frac{C_1}{\delta_t} \left( \frac{C_1 D_\delta}{\delta_t} \right)^{\zeta n} e^{(-\theta_1 + \beta + \eta)n}
m_t(H) ,
\end{equation}
and choosing $\zeta$ and $\beta$ sufficiently small yields a bound exponentially small in $n$ times
$m_t(H)$.

Finally, we focus on those $\omega$ with $j > \zeta n$.  Here we follow the proof
of \cite[Lemma 11]{DHL} and its modification in \cite[Lemma 4.5]{BDM}.  By \cite[Lemma 1]{DHL},
every time a piece grows to length $\delta_1$, a fixed fraction, call it $\xi$, of 
$f^{s_i}(\omega^{(i)})$ makes a full return to 
$X$ by a fixed time $s^*$. 
Since $m_t(X) \ge \delta_t$, we also know the portion that 
makes a full return by time $s^*$ constitutes a fixed fraction
$\xi_t$ of the $m_t$ measure of $f^{s_i}(\omega^{(i)})$.
Due to bounded distortion, a fixed fraction $\xi_t/D_\delta$
of $\omega^{(i)}$ makes a return
by time $s_i+s^*$.  We now iterate this $j$ times, using the fact that 
each $\omega \in E^n_j$ belongs to an interval $\omega^{(j)}$
which also has its $j$th growth time at time $s_j$.  Thus  
\[
\sum_{\omega \in E^n_j} m_t(\omega) \le \sum_{\omega^{(j)} \in E_j} m_t(\omega^{(j)})
 \le \left(1- \frac{\xi_t}{D_\delta} \right)^{j} m_t(X) ,
\]
where $E_j$ is the set of $\omega^{(j)}$ corresponding to $E^n_j$.
  Note that once $\mathcal{H}(h)$ is fixed, neither $X$ nor
$D_\delta$ changes as we shrink $h$.  Moreover, the fraction $\xi_t$ that returns
to $X$ by time $s^*$ does not deteriorate as $h$ decreases since a smaller hole does not prevent
an interval from making its full return to $X$.

Now due to bounded distortion and letting $|Df^{s_j}(\omega^{(j)})|$ denote the average value of
$|Df^{s_j}|$ on $\omega^{(j)}$, we have $m_t(\omega^{(j)}) \ge D_\delta^t m_t(f^{s_j}\omega^{(j)})
|Df^{s_j}(\omega^{(j)})|^{-t} e^{-s_jp_t}$.  This, together with the previous estimate implies
\begin{equation}
\label{eq:deriv sum}
\sum_{\omega^{(j)} \in E_j} |Df^{s_j}(\omega^{(j)})|^{-t} e^{-s_j p_t} \le D_\delta^{-t} 
\left( 1 - \frac{\xi_t}{\D_\delta} \right)^j .
\end{equation}

Between time $s_j$ and time $n$, the complexity of $\omega$ entering $H$ can increase
in the same way as described earlier.  Thus we may combine \eqref{eq:after s_j} 
with \eqref{eq:deriv sum} to obtain
\[
\begin{split}
\sum_{\omega \in E^n_j} m_t(\omega)
& \le \sum_{\omega^{(j)} \in E_j}  
\sum_{\substack{\omega \subset \omega^{(j)} \\ \omega \in E^n_j}}
\frac{m_t(\omega)}{m_t(f^{s_j}\omega)} m_t(f^{s_j}\omega) \\
& \le \sum_{\omega^{(j)} \in E_j}  
D_\delta^{-t} |Df^{s_j}(\omega^{(j)})|^{-t} e^{-s_j p_t} 
\sum_{\substack{\omega \subset \omega^{(j)} \\ \omega \in E^n_j}}
m_t(f^{s_j}\omega) \\
& \le C D_\delta^{-2t} 
\left( 1 - \frac{\xi_t}{\D_\delta} \right)^{j} e^{(n-s_j)(-\bar\gamma t + p_t)} m_t(H) .
\end{split}
\]
Summing this estimate for $j > \zeta n$ yields
\[
\sum_{\substack{\omega \in E^n_j \\ \zeta n < j \le n}} m_t(\omega) e^{\beta n}
\le C' \left(1- \frac{\xi_t}{D_\delta} \right)^{\zeta n } e^{\beta n} m_t(H),
\]
and this can be made exponentially small in $n$ by choosing $\beta$ sufficiently small.
Note that choosing $\beta$ small will force $H$ to be very small, but this is not a 
restriction since we are interested only in the small hole limit.

This estimate combined with \eqref{eq:less than zeta} completes the estimate on the free pieces
and the proof of Lemma~\ref{lem:uniform hole}.
\end{proof}

\begin{proof}[Proof of Lemma~\ref{lem:typical}]
Fix $0 < \varsigma < s_t$ and $\eta >0$.  By Lemma~\ref{lem:scale}, there exists $\delta_\varsigma >0$ and
a measurable set $E_{\delta_\varsigma}$ with $m_t(E_{\delta_\varsigma}) > 1-\eta$
such that \eqref{eq:scale} holds
for all $z \in E_{\delta_\varsigma}$ and all $\delta < \delta_\varsigma$. 

Now choose $\delta$ so small that  $2\delta < \delta_\varsigma$.  Then if
$z \in E_{\delta_\varsigma} \cap B_{\delta e^{-\varsigma n}}(f^n(c))$, we have by Lemma~\ref{lem:scale},
\[
m_t(B_{\delta e^{-\varsigma n}}(f^n(c))) \le m_t(B_{2\delta e^{-\varsigma n}}(z))
\le (4\delta)^{s_t - \varsigma} e^{-n \varsigma (s_t - \varsigma)} .  
\]
Let $J = \{ n \in \N : B_{\delta e^{-\varsigma n}}(f^n(c)) \cap E_{\delta_\varsigma} \neq \emptyset \}$.
Then it follows from the above estimate that
\[
m_t(\cup_{n \in J}  B_{\delta e^{-\varsigma n}}(f^n(c))) 
\le \sum_{n \in J}  (4\delta)^{s_t - \varsigma} e^{-n \varsigma (s_t - \varsigma)}
\le (4\delta)^{s_t - \varsigma} \frac{1}{1- e^{- \varsigma (s_t - \varsigma)}} ,
\]
and by shrinking $\delta$, we may make the quantity on the right hand side less
than $\eta$.  Now we estimate for all such $\delta$ sufficiently small,
\[
\begin{split}
m_t(z \in [0,1] : & z \notin B_{\delta e^{-\varsigma n}}(f^n(c)) \; \forall n \in \N )
\ge m_t(z \in E_{\delta_\varsigma} : z \notin B_{\delta e^{-\varsigma n}}(f^n(c)) \; \forall n \in \N ) \\
& \ge m_t(z \in E_{\delta_\varsigma}) - m_t(z \in E_{\delta_\ve} : z \in B_{\delta e^{-\varsigma n}}(f^n(c)) 
\mbox{ for some } n \in \N ) \\
& \ge m_t(z \in E_{\delta_\varsigma}) - m_t(\cup_{n \in J}  B_{\delta e^{-\varsigma n}}(f^n(c))) 
\ge 1-2\eta .
\end{split}
\]
Since $\eta>0$ was arbitrary, this completes the proof of the lemma with respect to
$m_t$.  Since $\mu_t \ll m_t$, the property is generic with respect to $\mu_t$ as well.
\end{proof}


\subsection{Proof of periodic lemma}
\label{density}

In this section we prove the necessary estimate to conclude Theorem~\ref{thm:zero hole}
in the periodic case.

\begin{proof}[Proof of Lemma~\ref{lem:per}]
The main idea of this proof is that by selecting $H$ appropriately, 
$\int_{\tH} (\Lp_{\phi_t}^n-\Lp_{\phi^H_t}^n) g^0_t~d\bm_t$ is comparable to the measure 
of the set $\hat H:=H\cap f^{-p}(H)$.  Once we have shown this, 
we use the fact that the density at $z$ exists and lies in $(0,\infty)$ to deduce that 
\[
\frac{\mu_t(\hat H)}{\mu_t(H)}\sim \frac{m_t(\hat H)}{m_t(H)}\sim e^{S_p\phi(z)},
\]
where the final estimate is immediate by conformality, 
and $\sim$ denotes a uniform constant
depending only on $f$ and $t$ (not $H$).

Recall that by construction, if a domain $X_\eps^i$ has an iterate $k$ such that 
$f^k(X_\eps^i)\cap H\neq \es$ then $f^k(X_\eps^i)\subset H$.  Hence if we fix 
a column $i$, then every time that an iterate $f_\Delta^k(\Delta_{i, 0})$ projects to intersect $H$, then in fact $\pi (f_\Delta^k(\Delta_{i, 0}))\subset H$.
We set $\mu_{\Delta,t} = g^0_t \bm_t$.

Note that $\Lp^n_{\phi^H_t}$ only includes preimages of points in $\tH$
which enter $\tH$ for the first time at time $n$, while $\Lp^n_{\phi_t}$ counts all preimages
of points in $\tH$ which enter $H$ at time $n$.
Thus to estimate the quantity $\int_{\tH} (\Lp_{\phi_t}^n-\Lp_{\phi^H_t}^n) g^0_t~d\bm_t$,
we sum the $\mu_{\Delta,t}$ measure on the tower of the set of points which 
both project to the hole $H$ at time $n$, as well as doing so at some previous time 
$0\le k \le n-1$.  

Fixing $n$, we choose $h$ so small that for $H \in \H(h)$, if  $x\in H$, but $f^p(x)\notin H$ then 
$f^k(x)\notin H$ for $k=1, \ldots, n$.  In particular, this means that if a point in a column 
of $\Delta$ projects to $H$ then the only way an $f_\Delta$-iterate of $x$ can 
project to $H$ again before time $n$ is if $\pi x$ was actually in some subset 
$f^{-ip}(H)\cap H$ for $i\ge 1$.  Moreover, the $f_\Delta$-orbit of $x$ cannot 
return to the base $\Delta_0$ and then later project to the hole again before time $n$
since $H$ is so small that it cannot grow to length $\delta_1$ by time $n$.  

Fix a column $i$ and suppose that at some level the projection 
$f_\Delta^k(\Delta_{i, 0})$ is inside $H$.  If this only happens once, 
then we don't count it.  If it happens exactly twice then we count the 
set of points which when iterated forwards $n$ times project to the hole for the second time.  
Notice that since $\mu_{\Delta, t}$ is $f_\Delta$-invariant, the set of points which 
when iterated forwards $n$ times projected to the hole for the first time has the 
same measure as $\Delta_{i, 0}$, i.e., $\mu_{\Delta, t}(\Delta_{i, 0})$.  
Continuing in the same way, we see that if parts of the column project to the 
hole exactly $k$ times, then we measure $(k-1)\mu_{\Delta, t}(\Delta_{i, 0})$.  
Note that we only continue up to time $n$, so since within a column we can 
only repeatedly hit the hole every $p$ iterates, this process stops when 
$k>\frac np$.  Therefore, 
\begin{equation}
\int_{\tH} (\Lp_{\phi_t}^n-\Lp_{\phi^H_t}^n) g^0_t~d\bm_t=\sum_{k= 2}^{\lfloor\frac np\rfloor} (k-1)\sum_{\{i:\pi \Delta_{i, j_q}\subset H \text{ for } k \text{ times } j_q\}}\mu_{\Delta, t}(\Delta_{i, 0}).
\label{eq:trans defect}
\end{equation}

Now notice that our setup implies that, as for $H$ itself, if $\pi \Delta_{i, j}\cap \hat H\neq \es$ then $\pi \Delta_{i, j}\subset \hat H$, so the measure of $\hat H$ is by definition
$$\sum_{\{i,j:\pi \Delta_{i, j}\subset \hat H\}} \mu_{\Delta, t}(\Delta_{i, j}).$$
If we consider this sum column by column, we can separate it into terms where the column projects to $\hat H$ $k$ times, and noting that each element $\Delta_{i, j}$ has the same $\mu_{\Delta, t}$-measure, we obtain (suggestively using index $k-1$ rather than $k$) 
\begin{equation}
\mu_t(\hat H) = \sum_{k\ge 2} (k-1)\sum_{\{i:\pi \Delta_{i, j_q}\subset \hat H \text{ for } k-1 \text{ times } j_q\}}\mu_{\Delta, t}(\Delta_{i, 0}).
\label{eq:meas inn hole}
\end{equation}
But since once a domain has $\pi \Delta_{i, j_q}\subset \hat H$ and $\pi f_\Delta^p( \Delta_{i, j_q})\cap \hat H=\es$, we must also have $\pi f_\Delta^p( \Delta_{i, j_q})\subset H\sm \hat H$, and moreover, $f_\Delta^p( \Delta_{i, j_q})$ is still in column $i$.  
Therefore, the values in \eqref{eq:trans defect} and \eqref{eq:meas inn hole} are the same, 
up to the measure of $H\cap f^{-p \lfloor\frac np\rfloor}(H)$ which we claim 
is of order $e^{S_{n}\phi(z)}$  and  is exponentially small in $n$.

To see this, note that by (H2) and choice of $\delta_0$, the orbit of $z$ is always
`free.'  Thus by (C1), we have 
$|Df^i(z)| \ge \kappa \delta_0^{\ell_{\max}-1} e^{\gamma i}$ for any $i \in \N$.
A similar bound holds at time $n$ for any $x \in H \cap f^{-p\lfloor \frac np \rfloor}(H)$
using bounded distortion, and the claim follows. 
\end{proof}


\subsection{Specific classes of maps satisfying our assumptions: the proof of Theorem~\ref{thm:zero hole adapted}}

We next assert that there is a reasonable class of maps with periodic points satisfying 
\eqref{eq:slow approach}.  Note that we expect the conclusions of the following 
lemma to hold for a much larger class of maps and periodic points.

\begin{lemma}
Let $z_4$ be a repelling periodic point of $f_4$ not lying on the critical orbit.  Consider its hyperbolic continuation $z_\lambda$ for $\lambda$ close to 4 (i.e. $z_\lambda$ has a topologically identical orbit under $f_\lambda$ as  $z_4$ does under $f_4$).  Then for any $\varsigma>0$ there exist $t_0<1<t_1$ and a positive Lebesgue measure set of parameters $\Omega'=\Omega'(z_4)$ such that whenever $\lambda\in \Omega'$ and $t\in [t_0, t_1]$, then $f_\lambda$ has an equilibrium state $\mu_t$ and 
there exists 
$ \delta_{z_\lambda} >0$ such that 
$$ |f^n(c) - z_\lambda| \ge \delta_{z_\lambda} e^{-n\varsigma} \mbox{ for all } n \ge 0.$$
In particular,  \eqref{eq:slow approach} holds for $z_\lambda$.
\label{lem:Omega per}
\end{lemma}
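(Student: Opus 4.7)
The plan is to combine the Benedicks--Carleson (BC) parameter exclusion with one additional round of exclusions that enforces the slow approach of the critical orbit to $z_\lambda$.

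First, I would start from a positive measure set $\Omega_0\subset(3,4]$ of BC parameters, constructed via the classical Benedicks--Carleson method and its refinements (Tsujii, Thunberg, Luzzatto--Tucker, etc.). For $\lambda\in\Omega_0$ the map $f_\lambda$ satisfies (C1)--(C2) with uniform constants $\Lambda,\vartheta_c$, and by the results cited in Section~\ref{sec:intro} (in particular \cite{BrKell,ITeq}) there is an open interval $[t_0,t_1]\ni 1$, depending only on the uniform BC constants, along which an equilibrium state $\mu_{\lambda,t}\ll m_{\lambda,t}$ exists for every $\lambda\in\Omega_0$.

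Second, since $z_4$ is a repelling periodic point of period $p$ not on the critical orbit of $f_4$, the implicit function theorem yields a real-analytic hyperbolic continuation $\lambda\mapsto z_\lambda$ for $\lambda$ near $4$, with $|\tfrac{d}{d\lambda}z_\lambda|$ uniformly bounded and $|Df_\lambda^p(z_\lambda)|$ uniformly bounded below by some $\rho>1$. In particular $z_\lambda\notin\orb(c)$ for $\lambda\in\Omega_0$ sufficiently close to $4$.

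Third, and this is the main step, for each $n\ge 1$ define the exceptional sets
\[
E_n:=\bigl\{\lambda\in\Omega_0 : |f_\lambda^n(c)-z_\lambda|<e^{-\varsigma n}\bigr\},
\]
and estimate $|E_n|$ via the BC transversality
\[
\left|\tfrac{d}{d\lambda}f_\lambda^n(c)\right|\;\asymp\;\bigl|Df_\lambda^{n-1}(f_\lambda(c))\bigr|\;\ge\;C\,e^{\Lambda(n-1)},
\]
valid along BC parameters. This is obtained by expanding
\[
\tfrac{d}{d\lambda}f_\lambda^n(c)=\sum_{k=0}^{n-1}Df_\lambda^{n-1-k}(f_\lambda^{k+1}(c))\,\tfrac{\partial f_\lambda}{\partial\lambda}(f_\lambda^k(c))
\]
and showing that the $k=0$ term dominates, which is the classical BC ``$(*)$-condition'' for the logistic family. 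Combined with the uniformly bounded derivative of $\lambda\mapsto z_\lambda$, a change-of-variables estimate yields $|E_n|\le C\,e^{-(\Lambda+\varsigma)n}$, which is summable. Choose $N$ so that $\sum_{n\ge N}|E_n|<\tfrac12|\Omega_0|$ and set
\[
\Omega'(z_4):=\Omega_0\setminus\bigcup_{n\ge N}E_n,
\]
so that $|\Omega'(z_4)|>0$. For $\lambda\in\Omega'(z_4)$ we get $|f_\lambda^n(c)-z_\lambda|\ge e^{-\varsigma n}$ for all $n\ge N$; the finitely many remaining iterates $n<N$ are absorbed by choosing $\delta_{z_\lambda}$ smaller than $\min_{n<N}e^{\varsigma n}|f_\lambda^n(c)-z_\lambda|>0$, which is positive by Step two.

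The main obstacle is the transversality estimate for $\tfrac{d}{d\lambda}f_\lambda^n(c)$: in order to make $|E_n|$ summable one needs the full $(*)$-condition along BC parameters, not merely on a full-measure sub-set. This is standard but technical, and should be extracted either by directly invoking Tsujii's refinement of BC (where the transversality is built in) or by folding it into a parameter induction parallel to the BC construction. Once that derivative lower bound is in place, the measure estimate on $E_n$ is a routine change of variables and the lemma follows.
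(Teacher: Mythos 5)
Your proposal is correct and is essentially the argument the paper relies on: the paper's own proof is a one-line reduction to \cite[Theorem 7]{FreFreTod13}, which is proved by exactly the Benedicks--Carleson parameter-exclusion scheme you describe (uniform BC constants giving (C1)--(C2) and, via \cite{BTeqnat}, the equilibrium states for $t$ near $1$; transversality of $\lambda\mapsto f_\lambda^n(c)$ giving summable measure for the exceptional sets around the hyperbolic continuation $z_\lambda$). You have simply written out the details that the paper outsources to that citation, and you correctly identify the transversality estimate as the one genuinely technical ingredient.
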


\begin{proof}
The proof is the same as for \cite[Theorem 7]{FreFreTod13}.  There it is shown that there is an acip for $f_\lambda$, but since this has exponential tails, \cite{BTeqnat} implies that $\mu_t$ also exists.  Moreover, 
they show that \eqref{eq:orig Now cond} holds. 
\end{proof}

\begin{proof}[Proof of Theorem~\ref{thm:zero hole adapted}]
We focus on the periodic case, since the tools required for the generic case are almost classical.  This means that we wish to prove condition (P) for our family of maps $f_\lambda$ and periodic points $z_\lambda$, as well as noting that $\hat\delta$ in Lemma~\ref{lem:scale} can be taken uniformly.  We consider the family of maps $\Omega'(z_4)$ given by Lemma~\ref{lem:Omega per}.  We notice that if we fix the constants $\nu_c, \Lambda, \alpha, \beta,  t$ then, possibly by restricting our class of maps, we also get a uniform estimate on $s_t$ and so by Lemma~\ref{lem:Omega per},  Lemma~\ref{lem:uniform hole} holds throughout our family.
We also use the fact here that $\mu_t$ and $m_t$ do not change too much within this family due to statistical stability (see \cite{FreTod10}), so the constants coming from the measure $m_t$ of small intervals can also be taken to depend only on the family.  Hence \eqref{eq:slow approach} holds.

To complete the proof of the lemma we must show that the density $\frac{d\mu_{\lambda, t}}{dm_{\lambda, t}}$ is bounded at $z_\lambda$.  This follows almost exactly as in \cite{Now93}, in particular Corollary 4.2.  The problem was expressed there as finding a uniform bound on  $(\L_{\phi_1}^n1)(z)$. 
The main issue was to estimate the distortion of $f^n$ along orbits which are relevant to this transfer operator, which was guaranteed when
\begin{equation}\sum_{n=0}^\infty\frac1{|Df^n(f(c))|^{\ell_c} |f^n(f(c))-z|^{1-\frac{1}{\ell_c}}}<\infty.
\label{eq:orig Now cond}
\end{equation}
Clearly this holds in our case by \eqref{eq:slow approach} and the exponential growth of derivative along the critical orbit.
For our case,  
for each $t$ in a neighborhood of 1 we are interested in finding a uniform bound on 
$(\L_{\phi_t}^n1)(z)$, that is showing
\begin{equation*}
\sum_{n=0}^\infty\frac1{\left(|Df^n(f(c))|^{\ell_c} |f^n(f(c))-z|^{1-\frac{1}{\ell_c}}\right)^t e^{np(t)}}<\infty.
\label{eq:Now cond}
\end{equation*}
Clearly for $t$ close to 1, the fact that this is bounded holds analogously to the case when $t=1$, i.e., \eqref{eq:orig Now cond} above.
\end{proof}

\begin{remark}
Note that the above proof of the boundedness of the density was closely tied to \eqref{eq:slow approach}.  The proof in  \cite{Now93} requires a negative Schwarzian condition along with unimodality.  We would expect this to extend beyond that setting.
\end{remark}


\begin{thebibliography}{9999}

\bibitem[BDM]{BDM} H.\ Bruin, M.\ Demers, I.\ Melbourne,
 \emph{Existence and convergence properties of physical measures for certain dynamical systems with holes,}
Ergodic Theory Dynam. Systems \textbf{30} (2010) 687--728.

\bibitem[BK]{BrKell} H.\ Bruin and G.\ Keller,
\emph{Equilibrium states for $S$-unimodal maps,}
Ergodic Theory Dynam. Systems {\bf 18} (1998) 765--789.

\bibitem[BS]{BrvS} H.\ Bruin, S.\ van Strien,
{\em Expansion of derivatives in one--dimensional dynamics},
Israel. J. Math. {\bf 137} (2003) 223--263.

\bibitem[BT]{BTeqnat} H.\ Bruin, M.\ Todd,
\emph{Equilibrium states for interval maps: the potential $-t\log|Df|$,}
Ann. Sci. \'Ecole Norm. Sup. (4) \textbf{ 42} (2009) 559--600.

\bibitem[B]{buzzi} J.\ Buzzi, \emph{Markov extensions for multidimensional dynamical
systems}, Israel J. Math. {\bf 112} (1999) 357--380.

\bibitem[CT]{ChuTak14} Y.M.\ Chung, H.\ Takahasi,
\emph{ Multifractal formalism for {B}enedicks-{C}arleson quadratic maps,}
 Ergodic Theory Dynam. Systems \textbf{34} (2014) 1116--1141.

\bibitem[DW]{demers wright}  M.F.~Demers, P.~Wright, \emph{Behaviour of the escape
rate function in hyperbolic dynamical systems}, Nonlinearity {\bf 25} (2012) 2133--2150.

\bibitem[DWY]{DWY2} M.F.\ Demers, P.\ Wright, L.-S.\ Young, \emph{Entropy, Lyapunov exponents and escape rates in open systems}, Ergodic Theory Dynam. Systems
{\bf 32} (2012) 1270--1301. 

\bibitem[DY]{DemY}  M.F.\ Demers, L.-S.\ Young,
\emph{Escape rates and conditionally invariant measures,}
 Nonlinearity \textbf{19} (2006) 377--397.
 
\bibitem[DHL]{DHL} K.\ D\'{\i}az-Ordaz, M.\ Holland and S.\ Luzzatto,
\emph{Statistical properties of one-dimensional maps with critical points
and singularities}, Stoch. Dyn. {\bf 6} (2006) 423--458.

\bibitem[Do]{Dob08} N.\ ÊDobbs
\emph{ On cusps and flat tops,}
Ann. Inst. Fourier (Grenoble) \textbf{64} (2014) 571--605.

\bibitem[FP]{FergPoll} A.\ Ferguson, M.\ Pollicott, \emph{Escape rates for Gibbs measures,}
Ergodic Theory Dynam. Systems \textbf{32} (2012) 961--988.

\bibitem[FFT]{FreFreTod13} A.C. Freitas, J.M. Freitas, M. Todd,
\emph{The compound Poisson limit ruling periodic extreme behaviour of non-uniformly hyperbolic dynamics,} Comm. Math. Phys.  \textbf{321} (2013) 483--527.

\bibitem[FT]{FreTod10} J.M. Freitas, M. Todd,
 \emph{The statistical stability of equilibrium states for interval maps,}
  Nonlinearity \textbf{22} (2009) 259--281.
  
\bibitem[Ho]{Hofdim} F.\ Hofbauer,
{\em Local dimension for piecewise monotonic maps on the interval,}
Ergodic Theory Dynam. Systems {\bf 15} (1995) 1119--1142.

\bibitem[I]{Iom} G.\ Iommi,
 \emph{Multifractal analysis for countable Markov shifts,} Ergodic Theory Dynam. Systems 25 (2005), 1881--1907.

\bibitem[IT1]{ITeq} G.\ Iommi, M.\ Todd,
\emph{Natural equilibrium states for multimodal maps,}
Comm. Math. Phys. \textbf{300} (2010) 65--94.

\bibitem[IT2]{ITdim} G.\ Iommi, M.\ Todd,
\emph{Dimension theory for multimodal maps,}
Ann. Henri Poincar\'e \textbf{12} (2011) 591--620.

\bibitem[IT3]{ITrev} G.\ Iommi, M.\ Todd,
\emph{Thermodynamic formalism for interval maps: inducing schemes,}
 Dyn. Syst. \textbf{28} (2013) 354--380.

\bibitem[KL1]{keller liverani} G.~Keller, C.~Liverani, \emph{Stability of the spectrum for 
transfer operators,}  Annali della Scuola Normale Superiore di Pisa,
Scienze Fisiche e Matematiche, (4) {\bf XXVIII} (1999), 141-152.

\bibitem[KL2]{KellLiv} G.\ Keller, C.\ Liverani,
\emph{Rare events, escape rates and quasistationarity: some exact formulae,}
 J. Stat. Phys. \textbf{135} (2009) 519--534.

\bibitem[L]{Led81} F.\ Ledrappier, {\em Some properties of
    absolutely continuous invariant measures on an interval, }
    Ergodic Theory Dynam. Systems {\bf 1}  (1981) 77--93.

\bibitem[LM]{LivMau03} C.\ Liverani, V.\ Maume-Deschamps,
\emph{Lasota-{Y}orke maps with holes: conditionally invariant probability measures and invariant probability measures on the survivor set,}
Ann. Inst. H. Poincar\'e Probab. Statist. \textbf{39} (2003) 385--412.    

\bibitem[M]{maume} V.\ Maume-Deschamps, \emph{Projective metrics and mixing properties on towers,} 
Trans. Amer. Math. Soc. {\bf 353} (2001) 3371--3389. 

\bibitem[N]{Now93} T.\ Nowicki,
\emph{Some dynamical properties of S-unimodal maps,}
 Fund. Math. \textbf{142} (1993) 45--57.

\bibitem[PS]{PesSen08} Y.\ Pesin and S.\ Senti,
\emph{Equilibrium measures for maps with inducing schemes}
J. Mod. Dyn. {\bf 2} (2008) 1--31.

\bibitem[Pr]{Prz93} F.\ Przytycki,
{\em Lyapunov characteristic exponents are nonnegative,}
Proc. Amer. Math. Soc. {\bf 119} (1993) 309--317.


\bibitem[PU]{PrzUrb10}  F.Ê Przytycki, M.\ Urba\'nski,
\emph{Conformal fractals: ergodic theory methods,}
London Mathematical Society Lecture Note Series. CUP, 2010.

\bibitem[Ra]{Rai89} P.\ Raith,
\emph{ Hausdorff dimension for piecewise monotonic maps,}
 Studia Math. \textbf{94} (1989) 17--33.

\bibitem[Ro]{Roh61} V.A.\ Rohlin,
\emph{Exact endomorphisms of a Lebesgue space,}
 Izv. Akad. Nauk SSSR Ser. Mat. \textbf{25} (1961) 499--530.

\bibitem[Se]{Sen03} S.\ Senti,
\emph{Dimension of weakly expanding points for quadratic maps,}
 Bull. Soc. Math. France \textbf{131} (2003) 399--420.

\bibitem[Y]{young tower} L.-S.\ Young, \emph{Statistical properties of dynamical systems
with some hyperbolicity}, Ann. of Math. {\bf 147} (1998) 585--650. 

\bibitem[Z]{zweimuller} R.~Zweim\"uller, \emph{Invariant measures for general(ized)
induced transformations,} Proc. Amer. Math. Soc. {\bf 133} (2005) 2283--2295.

\end{thebibliography}
\end{document}